\newcommand{\tikzstylemacro}{
\tikzstyle{n} = [draw,circle,minimum size=0.8mm,inner sep=0pt,outer sep=0pt,fill=black];
\tikzstyle{ns} = [draw,rectangle,minimum size=1.8mm,inner sep=0pt,outer sep=0pt,fill=black];
\tikzstyle{v} = [draw,circle,minimum size=5.0mm,inner sep=1pt,outer sep=0pt];
\tikzstyle{b} = [line width=1pt,color=black];
\tikzstyle{g} = [line width=1pt,color=NavyBlue];
\tikzstyle{r} = [line width=1pt, color=red,dashdotted];
\tikzstyle{rr} = [line width=1pt, color=red];
}
\newcommand{\KIVMixed}{K_4^{mixed}}
\newtheorem{theorem}{Theorem}
\newtheorem{lemma}[theorem]{Lemma}
\newtheorem{claim}[theorem]{Claim}
\newtheorem{obs}[theorem]{Observation}
\newtheorem{question}[theorem]{Question}
\newtheorem{proposition}[theorem]{Proposition}
\newtheorem{corollary}[theorem]{Corollary}
\newtheorem{conj}[theorem]{Conjecture}
\newdefinition{definition}[theorem]{Definition}
\newdefinition{rem}[theorem]{Remark}
\newdefinition{notation}[theorem]{Notation}
\newcommand{\set}[1]{\left\{ #1 \right\}}
\newcommand{\setk}[1]{\left\llbracket #1 \right\rrbracket}
\newcommand{\abs}[1]{\left| #1 \right|}
\newcommand{\setcond}[2]{\left\{ #1 \ \middle|\ \sloppy #2 \right\}}
\newcommand{\ZZ}{\mathbb{Z}}
\newcommand{\NN}{\mathbb{N}}
\newcommand{\floor}[1]{\left\lfloor #1 \right\rfloor}
\newcommand{\ceil}[1]{\left\lceil #1 \right\rceil}
\newcommand{\ssquare}{\ \square\ }
\newcommand{\ie}{{\it i.e.\ }}
\newcommand{\resp}{resp.\ }
\newcommand{\G}{(G,\sigma)}
\newcommand{\sto}{\longrightarrow_s}
\newcommand{\vlonly}[1]{}
\newcommand{\aftercaldam}[1]{#1}
\title{On Cartesian products of signed graphs}
\author[labri]{Dimitri Lajou}
\address[labri]{Univ. Bordeaux, Bordeaux INP, CNRS, LaBRI, UMR5800, F-33400 Talence, France.\\email: dimitri.lajou@labri.fr}
\begin{document}

\begin{abstract}
   In this paper, we study the Cartesian product of signed graphs as defined by Germina, Hameed and Zaslavsky (2011). Here we focus on its algebraic properties and look at the chromatic number of some Cartesian products. One of our main results is the unicity of the prime factor decomposition of signed graphs. This leads us to present an algorithm to compute this decomposition in linear time based on a decomposition algorithm for oriented graphs by Imrich and Peterin (2018). %Both these results use their counterparts for ordinary graphs %\cite{Sabidussi1959,Vizing1963,Imrich2007b}
    %as building blocks.
    We also study the chromatic number of a signed graph, that is the minimum order of a signed graph to which the input signed graph admits a homomorphism, of graphs with underlying graph of the form $P_n \ssquare P_m$, of Cartesian products of signed paths, of Cartesian products of signed complete graphs and of Cartesian products of signed cycles.
\end{abstract}

\maketitle

%\tableofcontents

\section{Introduction}
\label{sec:cartesian:intro}
%\note{TODO:}
%\begin{itemize}
%\item Def classiques
%\item Def Cartesian normal
%\item def signé
%\item def homomorphism
%\item motivations
%\item Annonce des résultats
%\end{itemize}
%

%A {\em signed graph} $G$ is an undirected simple loopless graph with edges colored by $\set{+,-}$ called {\em signs}. A signed graph is given with a {\em switch} operation at a vertex $v$ which consist in opposing the signs of all the edges incident with $v$. 

Signed graphs were introduced by Harary in \cite{Harary1953}. 
%In \cite{Zaslavsky1982}, Zaslavsky characterized equivalent signed graphs. 
In 2005, Guenin introduced the notion of homomorphism of signed graphs, which was later studied by Naserasr, Rollov\'{a} and Sopena \cite{Naserasr2014}. This gave rise to a notion of chromatic number $\chi_s\G$ of a signed graph $\G$ defined as the smallest order of a signed graph $(H,\pi)$ to which $(G,\sigma)$ admits a homomorphism.  

In this paper, we are interested in the study of Cartesian products of signed graphs, defined by Germina, Hameed and Zaslavsky in \cite{Germina2011}. They mainly study the spectral properties of the Cartesian product. In this paper, we present algebraic properties of the Cartesian product and study the chromatic number of some Cartesian products of signed graphs.

 The Cartesian product of two ordinary graphs $G$ and $H$, noted $G\ssquare H$, has been extensively studied. In 1957, Sabidussi \cite{Sabidussi1957} showed that $\chi(G \ssquare H) = \max(\chi(G),\chi(H))$ where $\chi(G)$ is the chromatic number of the graph $G$. Another notable article on the subject by Sabidussi \cite{Sabidussi1959} shows that every connected graph $G$ admits a unique prime decomposition, \ie{} there is a unique way to write a graph $G$ as a product of some graphs up to isomorphism of the factors. This result was also independently discovered by Vizing in \cite{Vizing1963}. Another algebraic property, the cancellation property, which states that if $A \ssquare B = A \ssquare C$, then $B = C$, was proved by Imrich and Klav\v{z}ar \cite{Imrich2007b} using a technique of Fern\'{a}ndez, Leighton and L\'{o}pez-Presa  \cite{Fernandez2007}.
On the complexity side, the main question associated with the Cartesian product is to decompose a graph with the best possible complexity. 
%Let $G$ be such a graph of order $n$ and of size $m$. 
The complexity of this problem has been improved successively in \cite{Feigenbaum1985,Winkler1987,Feder1992,Aurenhammer1992} to finally reach an optimal complexity of $O(m)$ in \cite{Imrich2007a} where $m$ is the number of edges of the graph.

Our study of the Cartesian product of signed graphs is divided in several sections. 
First in section~\ref{sec:cartesian:def}, we present general definitions of graph theory and set our notation. 
In section~\ref{sec:cartesian:prelim}, we present some useful results on signed graphs and on the Cartesian product of ordinary graphs. 
In section~\ref{sec:cartesian:cartesian}, we present the definition of the Cartesian product of signed graphs and give some first properties and easy consequences of the definition. We also prove the prime decomposition theorem for signed graphs and give an algorithm to decompose a Cartesian product of signed graphs into its  factors.
%
%In section~\ref{sec:cartesian:complete}, we compute the  chromatic number of the product of some complete graphs and compare it to the trivial upper bound on the  chromatic number of a product. 
%
%In section~\ref{sec:cartesian:cycles} we compute the  chromatic number of the product of two signed cycles.
We study the chromatic number of Cartesian products of signed complete graphs in section \ref{sec:cartesian:complete} and products of cycles in section \ref{sec:cartesian:cycles}.
Finally we present some concluding remarks in section~\ref{sec:cartesian:conclu}.

%\dimitri{update paragraph above at the end}

\section{Definitions and notation}
\label{sec:cartesian:def}
%\subsection{generalités}
All graphs we consider are undirected, simple and loopless.
For classical graph definitions, we refer the reader to the book \emph{Graph Theory} by Bondy and Murty \cite{Bondy2008}.
%
%A {\em graph} $G$ is a pair $(V(G),E(G))$ where $V(G)$ is the set of {\em vertices} and $E(G)$ is the set of {\em edges} of the graph $G$.
%
%An edge is a pair $\set{u,v}$, noted $uv$  for concision, where $u$ and $v$ are two distinct vertices of $G$. The vertices $u$ and $v$ are the {\em endpoints} of the edge $xy$.

Two vertices $u$ and $v$ of a graph $G$ are said to be {\em adjacent} when $uv$ is an edge of $G$. An edge $uv$ is {\em incident} with a vertex $w$ if and only if $w$ is one of $u$ or $v$.
The {\em neighborhood} $N_G(u)$ of a vertex $u$ in the graph $G$ is the set of vertices adjacent to $u$ in $G$. When the context is clear, we note $N(u)$ for the neighborhood of $u$ in $G$.
%
%The {\em set of all graphs} is noted $\mathbb G$.
%
The {\em order} of $G$ is $\abs{V(G)}$ and its {\em size} is $\abs{E(G)}$ where $\abs{X}$ is the cardinal of a set $X$. 
A {\em proper $k$-vertex-coloring} of a graph $G$ is a function from $V(G)$ to the set of colors $\setk{k} = \set{1,\dots,k}$, such that no two adjacent vertices receive the same color.
The {\em chromatic number} $\chi(G)$ of a graph $G$ is the smallest $k$ such that $G$ admits a proper $k$-vertex-coloring.

A {\em homomorphism} of $G$ to $H$ is a function $\varphi$ from $V(G)$ to $V(H)$ such that for all $x,y \in V(G)$, $xy \in E(G)$ implies $\varphi(x)\varphi(y) \in E(H)$. When there is a homomorphism of $G$ to $H$, we note $G \to H$. Note that the chromatic number of $G$, $\chi(G)$, can also be defined as the smallest order of a graph $H$ such that $G \to H$.
An {\em isomorphism} of $G$ to $H$ is a bijection $\varphi$ from $V(G)$ to $V(H)$ such that for all $x,y \in V(G)$, $xy \in E(G)$ if and only if $\varphi(x)\varphi(y) \in E(H)$. In this case, we note $G = H$. %An {\em automorphism} of $G$ is an isomorphism from $G$ to $G$. In general, a graph can have multiple automorphisms. 
%
%A {\em labelled graph} is a graph where each vertex is labelled by a unique number. For labelled graphs, we require isomorphisms to preserve the labelling. Therefore, these graphs only admit the identity as an automorphism. 
%

%When two graphs $G$ and $H$ are isomorphic, we note $G = H$.

A {\em walk} in a graph $G$ is a sequence $s_0,\dots,s_n$ of vertices of $G$ such that $s_i s_{i+1} \in E(G)$. Its {\em starting vertex} is $s_0$ and its {\em end vertex} is $s_n$. A {\em closed walk} is a walk where $s_0 = s_n$. If all elements of a walk are pairwise distinct, then the walk is a {\em path}. A closed walk where all elements are pairwise distinct, except $s_0$ and $s_n$, is a {\em cycle}. 
The length (number of edges, counted with multiplicity) of a walk $W$ = $s_0,\dots,s_n$ is $n$, and its order (number of vertices, counted with multiplicity) is $n$ if $W$ is a closed walk, or $n+1$ otherwise.
%Suppose $W$ is a walk $s_0,\dots,s_n$, we define the {\em length} of $W$ by its number of edges $n$ (taken with multiplicity) and the {\em order} of $W$ by its number of vertices (again taken with multiplicity). Note that a walk $s_0,\dots,s_n$ has order $n+1$ while a closed walk $s_0,\dots,s_n$ has order $n$ since we consider that $s_0$ and $s_n$ to count for only one vertex.

A graph is {\em connected} if for all pairs of vertices $u,v \in V(G)$, there is a path between $u$ and $v$. 
If $X \subseteq V(G)$, then the graph $G[X]$ is the subgraph of $G$ induced by $X$. We say that $G[X]$ is an {\em induced subgraph} of $G$.
% It has for vertex set $X$ and for edge set $E(G) \cap X^2$. 
%
The {\em complete graph} $K_p$ is the graph of order $p$ such that for all pair of distinct vertices of $G$, $u$ and $v$, $uv$ is an edge of $K_p$.
%
 %A graph $G$ is {\em bipartite} if we can partition $V(G)$ into $A \uplus B$ (where $\uplus$ is the disjoint union operation) such that each edge $xy$ of $G$ has one endpoint in $A$ and one endpoint in $B$.

\vlonly{
\subsection{quotients}
\todo{ plein d'abus de notation plus bas: à corriger + déplacer dans section cycles}
For a group $(H,+,0)$, noted simply $H$, and a subgroup $Q$ of $H$, the {\em quotient} $\faktor{H}{Q}$ is the group $(\setcond{ \overline{x} }{ x \in H},+,\overline{0})$ where $\overline{x} = \setcond{y\in H}{y = x +q, q\in Q}$ and where the $+$ operation verify $\overline{x} + \overline{y} = \overline{x+y}$.
If $G$ is a graph with vertex set a group $H$ and $Q$ is a subgroup of $H$, then the {\em quotient graph} $\faktor{G}{Q}$ over the vertices $\faktor{H}{Q}$ is defined by identifying the vertices in the same equivalence class. Similarly if $W$ is a walk $s_0, \dots, s_n$ on $G$, then the {\em quotient walk} $W'$ on  $\faktor{G}{Q}$ is the sequence $\overline{s_0}, \dots, \overline{s_n}$.
}

%\subsection{Signed graphs}
\medskip

%A {\em signed graph} $\G$ is composed of an undirected loopless simple graph $G$, called the {\em underlying graph} of $\G$, and a function $\sigma : E(G) \rightarrow \set{+1,-1}$ called the {\em signature} of $\G$ which gives to each edge of $G$ its {\em sign}. For an edge $e$ of $G$, $e$ is a {\em positive} edge if $\sigma(e) = 1$ and it is a {\em negative} edge otherwise.
\aftercaldam{
A \textit{signed graph} $(G, \sigma)$ is a graph $G$ along with a function
$\sigma: E(G) \rightarrow \{+1,-1\}$ called the \textit{signature} of $(G,\sigma)$, where $\sigma(e)$ is the \textit{sign} of the edge $e \in E(G)$.
 The edges in $\sigma^{-1}(+1)$ are the \textit{positive} edges  and the edges in $\sigma^{-1}(-1)$ are the \textit{negative} edges  of $(G, \sigma)$. }
We often write a signed graph $\G$ as $(G,\Sigma)$ where $\Sigma$ is the set of negative edges, that is  $\Sigma = \sigma^{-1}(-1)$. These two ways to represent a signed graph are equivalent and will be used interchangeably. 
%
%To simplify the notation in the rest of this paper, unless stated otherwise a signed graph will be named with a calligraphic letter ($\G$, $\mathcal{H}$, $\G_i$,\dots) and the corresponding block letter ($G$, $H$, $G_i$,\dots) will correspond to its underlying graph.
We note $K_p^+$ (\resp $K_p^-$) for the complete signed graph $(K_p,\varnothing)$ (\resp $(K_p, E(K_p))$) of order $p$ with only positive (\resp negative) edges.

\medskip

Let $(G,\sigma)$ be a signed graph and $v$ be a vertex of $G$. {\em To switch} $v$ is to create the signed graph $(G,\sigma')$ where $\sigma'(e) = - \sigma(e)$ when $e$ is incident to $v$ and $\sigma'(e) = \sigma(e)$ otherwise.  {To switch} a set $X$ of  vertices of $\G$ is to create the signed graph $(G,\sigma')$ where $\sigma'$ is obtained by switching every vertex of $X$, in any order.
This led Zaslavsky in \cite{Zaslavsky1982} to define the notion of equivalent signed graphs.
Two signed graphs $(G,\sigma_1)$ and $(G,\sigma_2)$ on the same underlying graph are {\em equivalent} if there exists a set $X \subseteq V(G)$ such that $(G,\sigma_2)$ is obtained from $(G,\sigma_1)$ by switching $X$. In this case we note $(G,\sigma_1) \equiv (G,\sigma_2)$. 
We also say that the two signatures $\sigma_1$ and $\sigma_2$ (\resp $\Sigma_1$ and $\Sigma_2$) are equivalent and we note $\sigma_1 \equiv \sigma_2$ (\resp $\Sigma_1 \equiv \Sigma_2$). 

\medskip

Suppose that $(G,\sigma)$ is a signed graph and $W$ is a walk $s_0,\dots,s_n$ in $G$. We say that $W$ is a {\em balanced walk} if $\sigma(W)= \sigma(s_0s_1)\sigma(s_1s_2)\dots\sigma(s_is_{i+1})\dots\sigma(s_{n-1}s_n) = 1$ and an {\em unbalanced walk} otherwise. Similarly, this notion can be extended to closed walks, paths and cycles. We note an unbalanced path (\resp balanced path) of order $k$  by $UP_k$ (\resp $BP_k$) and an unbalanced cycle (\resp balanced cycle) of order $k$ by $UC_k$ (\resp $BC_k$). % Similarly, we can define an unbalanced (resp. balanced) path $UP_k$ (resp. $BP_k$) of order $k$. 
A signed graph where all closed walks are balanced is said to be {\em balanced} while a signed graph where all closed walks are unbalanced is said to be {\em antibalanced}. Generally, for the same ordinary graph $G$, there are several signatures $\sigma$ for which $(G,\sigma)$ is balanced. They are precisely the signatures $\sigma_X$ which can be obtained from $(G,\varnothing)$ by switching $X$, where $X \subseteq V(G)$.  In particular it is the case for all signatures of a forest.
These notions of balanced and antibalanced graphs where introduced by Harary in \cite{Harary1953}. 

 One can check that the switch operation does not modify the set of balanced closed walks as switching at a vertex of a closed walk does not change the sign of this walk. 
Hence, signed graphs $(G,\sigma_1)$ and $(G,\sigma_2)$ on the same underlying graph are equivalent if and only if they have the same set of balanced closed walks \cite{Zaslavsky1982}. Note that this is equivalent to having the same set of balanced cycles, or the same set of unbalanced closed walks (resp. cycles). This means that we can work with the balance of closed walks or with switches depending on which notion is the easiest to use when treating equivalence of signed graphs.
%This led Zaslavsky in \cite{Zaslavsky1982} to define the notion of equivalent signed graphs.
%
%Two signed graphs $(G,\sigma_1)$ and $(G,\sigma_2)$ on the same underlying graph are {\em equivalent} if they have the same set of balanced closed walks. 

%We often consider signed graphs up to equivalence and we note the {\em set of all signed graphs} where we identify signed graphs with the same underlying graph and equivalent signatures by $\mathbb{SG}$.
%
%\medskip
%
% To \textit{switch} a set of vertices $S \subseteq V(G)$ of a signed graph $(G, \sigma)$ is to switch the signs of the edges of the edge cut $[S, V(G) \setminus S]$. 
% The signed graph obtained by switching $S \subseteq V(G)$ of $(G, \sigma)$ is denoted by $(G, \sigma^{(S)})$.
 % Moreover, if $(G, \sigma_2)$ can be obtained by switching a set of vertices of $(G, \sigma_1)$, then we say that the two signed graphs are \textit{equivalent} and denote it by $(G, \sigma_1) \equiv (G, \sigma_2)$.

\medskip
A {\em homomorphism} of a signed graph $(G,\sigma)$ to a signed graph $(H,\pi)$
is a homomorphism $\varphi$ of $G$ to $H$
which maps balanced (\resp unbalanced) closed walks of $(G,\sigma)$ to balanced (\resp unbalanced) closed walks of $(H,\pi)$.
Alternatively, a homomorphism of $(G,\sigma)$ to $(H,\pi)$ is a homomorphism of $G$ to $H$ such that there exists a signature $\sigma'$ of $G$ with $\sigma' \equiv \sigma$, such that if $e$ is an edge of $G$, then $\pi(\varphi(e)) = \sigma'(e)$.
When there is a homomorphism of $\G$ to $(H,\pi)$, we note $\G \sto (H,\pi)$ and say that $\G$ {\em maps to} $(H,\pi)$. Here $(H,\pi)$ is the {\em target graph} of the homomorphism.
%
%Moreover if $\mathcal G \equiv \mathcal G'$, $\mathcal H \equiv \mathcal H'$ and $\mathcal G \sto \mathcal H$ then $\mathcal G' \sto \mathcal H'$. 
When constructing a homomorphism, we can always fix a given signature of the target graph \cite{Naserasr2014}.
%The latter definition is the one originaly used by Naserasr, Rollov\'a and Sopena in~\cite{Naserasr2014} when they introduced the notion of signed homomorphism. \todo{ajouter l'origine de la première def ?}

%\medskip

The {\em chromatic number} $\chi_s\G$ of a signed graph $\G$ is the smallest $k$ for which $\G$ admits a homomorphism to a signed graph $(H,\pi)$ of order $k$. 
Alternatively, a signed graph $(G,\sigma)$ admits a {\em $k$-(vertex)-coloring} if there exists $\sigma' \equiv \sigma$ such that $(G,\sigma')$ admits a proper vertex coloring $\theta:V(G) \to \setk{k}$ verifying that for every $i,j \in \setk{k}$, all edges $uv$ with $\theta(u) = i$ and $\theta(v) = j$ have the same sign in $(G,\sigma')$.
Here $\chi_s(G,\sigma)$ is the smallest $k$ such that $(G,\sigma)$ admits a $k$-vertex-coloring.
%
%Alternatively, a {\em $k$-(vertex-)coloring} of a signed graph is a proper $k$-(vertex-)coloring of one of its signature such that all edges between color $i$ and $j$ have the same sign, and $\chi_s(G)$ is the smallest $k$ for which a $k$-coloring of $G$ exists.
%
The two definitions are equivalent, as with any coloring of a signed graph, we can associate a homomorphism of signed graphs which 
identifies the vertices with the same color. 
The homomorphism is well defined as long as the target graph is simple, which is the case here by definition of a $k$-vertex-coloring.
% The signed graph thus formed is simple by the hypotheses on the vertex coloring.
%switches the signed graph to get the signature for which the coloring is defined, and then identifies all vertices having the same color.

\medskip

\vlonly{
%\subsection{algebra}
\medskip

A {\em monoid} is a triple $(X,\ast,e)$ where $X$ is a set, $\ast : X \times X \to X$ is a binary operation and $e$ is an element of $X$, such that 
$\ast$ is {\em associative} (\ie for every $a,b,c \in X$, $(a \ast b) \ast c = a \ast (b \ast c)$) and
$e$ is a {\em neutral element} for $\ast$ (\ie for every  $a \in X$, $ e \ast a = a \ast e = a$).
A monoid $(X,\ast,e)$ is {\em commutative} if  and only if for every $a,b \in X$, $a \ast b = b\ast a$. 
We say that $(X,\ast,e)$ has the {\em cancellation property} (or is {\em cancellative}) if for all $a, b ,c \in X$, $a \ast b = a \ast c$ always implies $b = c$ and $b \ast a = c \ast a$ always implies $b = c$. For example $(\mathcal{G},\uplus, K_0)$ is a monoid  where $\uplus$ is the disjoint union operation. This monoid is commutative and has the cancellation property. Note that a monoid is like a group without the invert operation.

Let $X$ be a set, $+$ and $\times$ be two binary operation $X \times X \to X$ and $0,1$ be two distinct elements of $X$.
We say that $(X,+,\times,0,1)$ is a {\em semi-ring} if and only if $(X,+,0)$ is a commutative monoid, $(X,\times,1)$ is a monoid,
$\times$ left and right distributes over $+$ (\ie for all $a,b,c \in X$, $a \times (b + c) = a \times b + a \times c$ and $(b + c) \times a = b \times a + c \times a$), $0$ annihilates $X$ (\ie for all $a\in X$, $0 \times a = 0$).
We say that $(X,+,\times,0,1)$ is {\em commutative} if and only if $(X,\times,1)$ is commutative.
We say that $(X,+,\times,0,1)$ has the {\em cancellation property} (or is {\em cancellative}) if $(X,+,0)$ and $(X,\times,1)$ are cancellative.
}
%\todo{More defs?}

%\subsection{cartesian}
\medskip

%Suppose that $G$ and $H$ are two ordinary graphs.
The {\em Cartesian product} of two ordinary graphs $G$ and $H$ is the graph $G \ssquare H$ whose vertex set is $V(G) \times V(H)$ and where $(x,y)$ and $(x',y')$ are adjacent if and only if $x=x'$ and $y$ is adjacent to $y'$ in $H$, or $y = y'$ and $x$ is adjacent to $x'$ in $G$. 

A graph $G$ is {\em prime} if there are no graphs $A$ and $B$ on at least two vertices for which $G = A \ssquare B$. 
A decomposition $D$ of a graph $G$ is a multiset $\set{G_1,\dots,G_k}$, $k\geq 1$,  such that the $G_i$'s are graphs containing at least one edge and $G = G_1 \ssquare \cdots\ssquare G_k$.
A decomposition is {\em prime} if all the $G_i$'s are prime.
The $G_i$'s are called {\em factors} of $G$.
%
%\dimitri{J'ai changé dessous}
A decomposition $D'$ is \emph{finer} than a decomposition $D = \set{G_1,\dots,G_k}$, if for all $i \in \setk{k}$, there is a decomposition $D_i' = \set{G_{i,1}', \dots, G_{i,p_i}'}$ of $G_i$  such that 
 $D' = \set{G_{1,1}', \dots, G_{1,p_1}', G_{2,1}', \dots, G_{k,p_k}'}$. 
 % A decomposition $D$ is {\em finer} than another decomposition $D'$ if $D' = \set{G_1',\dots,G_k'}$ and for each $i$ there exists a decomposition  of $G_i'$ such that .
\aftercaldam{ Note that by definition, every decomposition is finer than itself.}

Suppose that $G$ is a graph and $D = \set{G_1,\dots,G_k} $ is a decomposition of $G$ such that $G = G_1 \ssquare \dots \ssquare G_k$. 
A {\em coordinate system} for $G$ under the decomposition $D$ is a bijection $\theta: V(G) \rightarrow \prod_{i = 1}^k V(G_i)$ verifying that for each vertex $v$ of $G$, the set of vertices which differ from $v$ by the $i$th coordinate induces a graph, noted $G_i^v$ and called a {\em $G_i$-layer}, which is isomorphic to $G_i$ by the projection on the $i$th coordinate.  An edge $uv$ of $G$ is a copy of an edge $ab$ of $G_i$ if $\theta(u)$ and $\theta(v)$ differ only in their $i$th coordinate with $u_i =a $ and $v_i=b$. For a vertex $u$ of $G$ and a $G_i$-layer $G_i^v$, the projection of $u$ on the $G_i$-layer $G_i^v$ is the vertex $w$ of $V(G_i^v)$ which is the closest to $u$. 

Suppose $D = \set{G_1,\dots,G_k}$ is a decomposition of an ordinary graph $G$.     We say that two $G_i$-layers $X_1$ and $X_2$ are {\em adjacent} by $G_j$ if and only if there exists an edge $ab$ of a  $G_j$-layer such that $a \in X_1$ and $b \in X_2$. In other words, the subgraph induced by the vertices of $X_1$ and $X_2$ is isomorphic to $G_i \ssquare K_2$ where  $K_2$ corresponds to the edge $ab$.

Let $A$ and $B$ be two ordinary graphs. The  \emph{greatest common divisor} of $A$ and $B$ is the graph $X$ such that, for every three graphs $W$, $Y$, and $Z$ with $A = W \ssquare Y$ and $B = W \ssquare Z$, $X$ is a factor of $W$.

\section{Preliminary results}
\label{sec:cartesian:prelim}
The goal of this section is to present useful results on signed graphs and on the Cartesian product of ordinary graphs.

%\subsection{Previous result on signed graphs}
%We first present some useful results on signed graphs.

In \cite{Zaslavsky1982}, Zaslavsky gave a way to determine if two signed graphs are equivalent in linear time. 
%\begin{theorem}[Zaslavsky \cite{Zaslavsky1982}]
%Two signature on the same underlying graph are 
%%switching
%equivalent if and only if they have the same list of balanced cycles if and only if one can obtain the other y a sequence of switches.
%Moreover, we can test in linear time if two labelled signed graphs are equivalent.
%\end{theorem}
In particular, all signed forests with the same underlying graph are equivalent. 
This theorem comes from the following observation.

\begin{obs}[Zaslavsky \cite{Zaslavsky1982}]\label{lemma:balancing-and-homorphism}
If $C$ is a cycle of a graph $G$, then switching any number of vertices of $G$ does not change the parity of the number of negative edges of $C$.% in $\G$ is the same in all $(G,\sigma')$ with $\sigma' \equiv \sigma$.
\end{obs}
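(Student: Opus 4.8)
The plan is to reduce the statement about switching an arbitrary set of vertices to the atomic case of switching a single vertex, and then to track precisely how many edges of $C$ have their sign flipped. Since switching a set $X$ is by definition nothing more than switching each vertex of $X$ in turn, the parity of the number of negative edges of $C$ is preserved under switching $X$ as soon as it is preserved under each individual single-vertex switch. So it suffices to prove the claim for switching one vertex $v$, and then invoke this repeatedly (equivalently, argue by induction on $\abs{X}$).

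For the single-vertex case, I would split into two cases according to whether $v$ lies on $C$. The key structural fact is that a cycle is $2$-regular: each of its vertices is incident to exactly two edges of $C$. First, if $v \notin V(C)$, then no edge of $C$ is incident to $v$, so switching $v$ leaves the sign of every edge of $C$ unchanged, and a fortiori the number of negative edges of $C$ is unchanged. Second, if $v \in V(C)$, then exactly two edges of $C$ are incident to $v$; switching $v$ flips the sign of precisely these two edges of $C$ and of no other edge of $C$.

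It then remains to observe that flipping the sign of exactly two edges changes the count of negative edges of $C$ by an even amount: each of the two flips alters the count by $\pm 1$, so their combined effect is a change of $0$ or $\pm 2$, which is even. Hence the parity of the number of negative edges of $C$ is unchanged by a single-vertex switch in either case, and by the reduction above it is unchanged by switching any set of vertices.

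I do not expect a genuine obstacle here: the content of the argument is entirely the remark that each vertex of $C$ meets exactly two edges of $C$, so a switch flips an even number of edges of $C$. The only point requiring a little care is making the reduction from a set switch to single-vertex switches explicit, and confirming that the order in which the vertices of $X$ are switched is irrelevant for the parity conclusion (which is immediate, since the parity is preserved at every intermediate step).
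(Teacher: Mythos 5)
Your proof is correct and is essentially the argument the paper relies on: the paper states this as an observation (citing Zaslavsky) and justifies it elsewhere only by the same remark, namely that switching at a vertex of a cycle (or closed walk) flips exactly the two incident edges of $C$, hence changes the number of negative edges by an even amount. Your reduction to single-vertex switches and the two-case analysis (with $v \in V(C)$ or not) is exactly the expected justification, so nothing is missing.
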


%In \cite{Naserasr2014}, Naserasr {\it et al.} define signed homomorphism and give a number of useful result on them. The two presented below help us understand how colorings and homomorphism works.
%
%\begin{theorem}[Naserasr, Rollov\'{a} and Sopena \cite{Naserasr2014}]\label{thm:homo-sign}
%The relation $\sto$ is associative. In particular if $G \sto H$ and $\Pi$ is a signature of $H$ then there exists $\Sigma$ signature of $G$ such that $G_\Sigma \longrightarrow_2 H_\Pi$.
%\end{theorem}
%
%\begin{theorem}[Naserasr, Rollov\'{a} and Sopena \cite{Naserasr2014}]\label{thm:idable}
%Two vertices $u$ and $v$ of a signed graph $G$ are identifiable if and only if they are not adjacent and do not belong to the same $UC_4$ in $G$.
%\end{theorem}
%
%Note that when we construct homomorphism from $G$ to $H$ we often proceed by a sequence of switches and mappings (often identifications). This is a valid way to construct such homomorphism by Theorem~\ref{thm:homo-sign}.

This implies that we can separate the set of all cycles into four families $BC_{even}$, $BC_{odd}$, $UC_{even}$ and $UC_{odd}$,  depending on the parity of the number of negative  edges (even for $BC_{even}$ and $BC_{odd}$ and odd for $UC_{even}$ and $UC_{odd}$) and the parity of the length of the cycle (even for $BC_{even}$ and $UC_{even}$ and odd for $BC_{odd}$ and $UC_{odd}$). 

\aftercaldam{
\begin{theorem}%[Jacques, Montassier, Pinlou \cite{Fabien2019}]
\label{thm:cyclespasproduit}
Let $(C,\sigma)$ be a signed cycle. We then have:
\begin{enumerate}
    \item $\chi_s(C,\sigma) = 2$ if $(C,\sigma) \in BC_{even}$,
    \item $\chi_s(C,\sigma) = 3$ if $(C,\sigma) \in BC_{odd} \cup UC_{odd}$,
    \item $\chi_s(C,\sigma) = 4$ if $(C,\sigma) \in UC_{even}$.
\end{enumerate}
\end{theorem}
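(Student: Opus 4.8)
The plan is to establish matching lower and upper bounds for each of the four families, using two facts already available: the sign of a closed walk is invariant under switching (as noted when discussing equivalence of signatures), and a signed homomorphism sends a closed walk to a closed walk of the \emph{same length} while preserving its balance.

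For the upper bounds I would exhibit explicit homomorphisms. A cycle in $BC_{even}$ is balanced, hence equivalent to the all-positive even cycle, which is bipartite and therefore maps to $K_2^+$, giving $\chi_s \le 2$. A cycle in $BC_{odd}$ is equivalent to an all-positive odd cycle; it admits a proper $3$-coloring, so it maps to $K_3^+$ with every edge positive, giving $\chi_s \le 3$. A cycle in $UC_{odd}$ is equivalent to the all-negative odd cycle (both are unbalanced, and on a single cycle equivalence is decided by balance alone), so a proper $3$-coloring yields a homomorphism to $K_3^-$, giving $\chi_s \le 3$. Finally, for $UC_{even}$ I would switch the cycle to the form with exactly one negative edge $v_0v_1$ and map it to $\KIVMixed$: send $v_0,v_1$ to the two endpoints of the unique negative edge and route the remaining, all-positive part of the cycle through the other two vertices, alternating between them. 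This is legitimate because $\KIVMixed$ is complete, so the only constraint to respect is that $v_0v_1$ lands on the negative edge and no other cycle-edge does; this gives $\chi_s \le 4$.

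For the lower bounds I would proceed as follows. Every signed cycle has an edge and hence cannot map to a one-vertex graph, so $\chi_s \ge 2$. For the two odd families the underlying graph is an odd cycle with ordinary chromatic number $3$, and since a signed homomorphism is in particular a homomorphism of the underlying graphs, $\chi_s(C,\sigma) \ge \chi(C) = 3$. The crux is the bound $\chi_s \ge 4$ for $UC_{even}$, and this is the step I expect to be the main obstacle. Here I would note that a homomorphism sends the cycle, which is an \emph{unbalanced closed walk of even length}, to an unbalanced closed walk of even length in the target; it therefore suffices to show that no signed graph on at most three vertices contains such a walk. I would argue this by a short case analysis on the target $H$: if the underlying graph of $H$ is triangle-free on three vertices then it is a forest, hence balanced, with no unbalanced closed walk at all; and if $H$ is a triangle then up to switching it is either balanced (again no unbalanced walk) or equivalent to $K_3^-$, in which a closed walk of length $\ell$ has sign $(-1)^\ell$, so all its unbalanced closed walks have odd length. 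In every case there is no unbalanced even closed walk, forcing the target to have at least four vertices.

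Combining the matching bounds within each family yields the three stated values, and the delicate point throughout is the switching-invariance of closed-walk signs, which is exactly what makes the three-vertex case analysis for $UC_{even}$ go through.
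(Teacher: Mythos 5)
Your proposal is correct, and it differs from the paper's proof in two substantive ways. First, the paper does not prove the upper bounds at all: it imports them from \cite{Duffy2020} and only argues the lower bounds; your explicit homomorphisms ($BC_{even} \sto K_2^+$, $BC_{odd} \sto K_3^+$, $UC_{odd} \sto K_3^-$, and the alternating routing of the one-negative-edge form of $UC_{even}$ into $\KIVMixed$) make the statement self-contained, and they all check out, including the endgame of the $UC_{even}$ map (the last cycle edge lands on a positive edge $da$ of $\KIVMixed$ since only $ab$ is negative). Second, for the crux $\chi_s(UC_{even}) \geq 4$, both arguments ultimately rest on the same fact --- a signed $K_3$ switches to all-positive or all-negative --- but they exploit it through dual formulations that the paper itself notes are interchangeable: the paper pulls the signature back along the homomorphism and derives a contradiction from the parity of negative edges of $UC_{2q}$ (via Observation~\ref{lemma:balancing-and-homorphism}), whereas you push the cycle forward as an \emph{unbalanced closed walk of even length} and verify that no signed graph on at most three vertices contains one (forests are balanced; a triangle is equivalent to $K_3^+$, with no unbalanced closed walks, or to $K_3^-$, where every closed walk of length $\ell$ has sign $(-1)^\ell$). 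Your version has the small additional merit of handling non-complete targets of order at most $3$ explicitly (the forest case), a step the paper elides by silently passing to $(K_3,\pi)$; the paper's version is marginally shorter because it never needs the length-preservation of walks under homomorphisms.
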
}
%\todo{artefact de font bizzare ici}

\begin{proof}
By \cite{Duffy2020}, we already have the upper bounds.
A homomorphism of signed graphs is also a homomorphism of graphs thus $\chi(C) \leq \chi_s(C,\sigma)$. This proves the lowers bounds for the first two cases. 
%If $(C,\sigma) \in BC_{even} \cup BC_{odd} \cup UC_{odd}$, it can be switched to have only one type of edges. Thus any homomorphism of the underlying graph is a homomorphism of signed graphs and we have our result for the first three classes. 
%
Let $(C,\sigma) \equiv UC_{2q}$ and suppose $\chi_s(C,\sigma) \leq 3$. Then $(C,\sigma) \sto (K_3,\pi)$. In each case, $(K_3,\pi)$ can be switched either to be  all positive or to be all negative. This means that $(C,\sigma)$ can be switched either to be  all positive or to be all negative, which is not the case as $UC_{2q}$ has an odd number of negatives edges and an odd number of positive edges, a contradiction. We get the desired lower bounds in each case.
%Moreover, we can easily map $(C,\sigma) $ to $UC_4$. This is done by switching $(C,\sigma)$ to $(C,\sigma')$ such that the latter has a positive path $u_1u_2\dots,u_{2q}$ and a negative edge $u_1u_{2q}$, then we identify $u_2,u_4,\dots, u_{2q-2}$ on one side and $u_3,u_5,\dots, u_{2q-1}$ on the other to get $UC_4$.
\end{proof}

%\subsection{Previous results on the Cartesian product}
One of the first results on the chromatic number of Cartesian products of ordinary graphs is due to Sabidussi:
%One of the first results on Cartesian products is a result from Sabidussi on the chromatic number of the product of two graphs.

\begin{theorem}[Sabidussi \cite{Sabidussi1957}]
\label{thm:ordinari-chi-prod}
For every two graphs $G$ and $H$, $\chi(G \ssquare H) = \max(\chi(G), \chi(H))$.
\end{theorem}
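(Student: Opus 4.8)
The plan is to prove the classical Sabidussi identity $\chi(G \ssquare H) = \max(\chi(G), \chi(H))$ in two directions. First I would establish the lower bound, then the upper bound via an explicit coloring. The lower bound is the easy half: both $G$ and $H$ embed as layers (induced subgraphs) of $G \ssquare H$ — indeed a $G$-layer $G^v$ is isomorphic to $G$ and an $H$-layer is isomorphic to $H$ — and the chromatic number is monotone under taking subgraphs. Hence $\chi(G \ssquare H) \geq \chi(G)$ and $\chi(G \ssquare H) \geq \chi(H)$, giving $\chi(G \ssquare H) \geq \max(\chi(G), \chi(H))$.

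For the upper bound, set $k = \max(\chi(G), \chi(H))$ and exhibit a proper $k$-coloring of $G \ssquare H$. Fix proper colorings $c_G : V(G) \to \setk{k}$ and $c_H : V(H) \to \setk{k}$ (each uses at most $k \geq \chi(G), \chi(H)$ colors, so both land in $\setk{k}$). Identify the color set $\setk{k}$ with $\ZZ/k\ZZ$ and define the coloring
\[
c(x,y) = c_G(x) + c_H(y) \pmod{k}.
\]
The key step is to verify this is proper. An edge of $G \ssquare H$ joins $(x,y)$ to $(x',y')$ where either $x = x'$ and $yy' \in E(H)$, or $y = y'$ and $xx' \in E(G)$. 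In the first case $c(x,y) - c(x',y') = c_H(y) - c_H(y') \neq 0 \pmod k$ because $c_H$ is proper and the two colors are distinct elements of $\setk{k}$ (distinct residues since both lie in $\set{1,\dots,k}$, which are pairwise distinct mod $k$); the second case is symmetric. Thus adjacent vertices receive distinct colors, so $\chi(G \ssquare H) \leq k = \max(\chi(G), \chi(H))$.

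Combining the two bounds yields the equality. The one point requiring care — and the only real obstacle — is the modular argument: one must ensure that $c_G(x) \neq c_G(x')$ as integers in $\set{1,\dots,k}$ genuinely implies $c_G(x) - c_G(x') \not\equiv 0 \pmod k$, which holds precisely because two distinct values in $\set{1, \dots, k}$ differ by a nonzero amount strictly between $-k$ and $k$. This is why using $k$ colors (rather than fewer) and reducing modulo $k$ works cleanly. An alternative to the additive construction, avoiding arithmetic entirely, is to color $(x,y)$ by a suitable combination that permutes the palette differently in each layer, but the $\ZZ/k\ZZ$ sum is the most economical to state and verify.
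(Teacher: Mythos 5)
Your proof is correct and complete: the lower bound via the fact that $G$-layers and $H$-layers are induced subgraphs, and the upper bound via the additive coloring $c(x,y) = c_G(x) + c_H(y) \pmod{k}$ with the observation that distinct values in $\set{1,\dots,k}$ remain distinct modulo $k$, is precisely the classical argument for this result. Note that the paper itself gives no proof — it states the theorem with a citation to Sabidussi — so there is nothing in-paper to compare against; your argument is the standard one and is sound.
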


Following this paper, Sabidussi proved one of the most important results on the Cartesian product: the unicity of the prime decomposition of connected graphs. This result was  independently proved by Vizing.

\begin{theorem}[Sabidussi \cite{Sabidussi1959} and Vizing \cite{Vizing1963}]
\label{thm:ordinari-prime-decompostion}
Every connected ordinary graph $G$ admits a unique prime decomposition up to the order and isomorphisms of the factors.
\end{theorem}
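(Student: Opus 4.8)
The plan is to treat existence and uniqueness separately. Existence is the easy direction: starting from $G$, repeatedly replace any factor that admits a nontrivial decomposition by its factors. Every factor of a decomposition contains an edge and hence at least two vertices, and $\abs{V(A \ssquare B)} = \abs{V(A)}\cdot\abs{V(B)}$, so the number of factors is bounded by $\log_2 \abs{V(G)}$ and the process terminates in a prime decomposition. The substance is uniqueness, which I would obtain from a \emph{canonical} edge partition that does not depend on any chosen decomposition. Define a relation $\Theta$ on $E(G)$ by putting $xy \mathbin{\Theta} uv$ whenever $d_G(x,u) + d_G(y,v) \neq d_G(x,v) + d_G(y,u)$, and let $\Theta^*$ be its transitive closure; since $G$ is connected this is an equivalence relation on $E(G)$, defined purely from the distance function of $G$.

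The first main step is to show that $\Theta^*$ refines every decomposition. Fix a decomposition $D = \set{G_1,\dots,G_k}$ with coordinate system $\theta$. In a Cartesian product distances add coordinatewise, $d_G(u,v) = \sum_i d_{G_i}(\theta(u)_i,\theta(v)_i)$, so a direct computation shows that a copy $xy$ of an edge of $G_i$ and a copy $uv$ of an edge of $G_j$ with $i \neq j$ always satisfy $d_G(x,u)+d_G(y,v) = d_G(x,v)+d_G(y,u)$, and hence are never in relation $\Theta$. Consequently each $\Theta^*$-class consists of copies of edges of a single factor, i.e. the colouring of $E(G)$ by factors is coarser than the partition into $\Theta^*$-classes, and this holds for \emph{every} decomposition.

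The second main step, which is the hard part, is to show conversely that the partition of $E(G)$ into $\Theta^*$-classes is itself realised by a decomposition: grouping the classes, one must build factors $H_1,\dots,H_m$ together with an isomorphism of $G$ onto $H_1 \ssquare \cdots \ssquare H_m$ for which the edges of each $H_\ell$-layer form exactly one $\Theta^*$-class. This is where the main obstacle lies. One has to verify that the $\Theta^*$-classes behave as genuine coordinate directions: that the connected subgraphs they span are the layers of a product, that opposite edges of every induced $4$-cycle fall in the same class so that squares close up correctly, and that projecting $G$ along a class yields a well-defined factor isomorphic to each of its layers. The notions of layer and of adjacency of layers by a factor, together with a square lemma asserting that an edge $\Theta$-related to one side of a square forces the local product structure, are exactly the tools needed here.

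Granting both steps, uniqueness is immediate. Let $F$ be the decomposition produced in the second step, whose colouring is the $\Theta^*$-partition, and let $D'$ be any prime decomposition. By the first step the colouring of $D'$ is coarser than the $\Theta^*$-partition, so $F$ refines $D'$; were this refinement strict, some factor of $D'$ would split as a nontrivial product, contradicting primality, so $F$ and $D'$ induce the same partition of $E(G)$. Finally, a partition of $E(G)$ into colour classes determines each factor up to isomorphism, since a factor is recovered as any of its layers, namely the connected component of the corresponding colour class through a vertex, read off via the projection. Therefore $D'$ has the same factors as $F$ up to reordering and isomorphism, which is the claimed uniqueness. The delicate point throughout is the second step, turning the canonical distance-defined edge partition back into an honest product, and this is precisely the heart of the Sabidussi--Vizing theorem.
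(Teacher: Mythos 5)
The paper itself offers no proof of this theorem; it is imported verbatim from Sabidussi and Vizing, so the only question is whether your argument stands on its own. It does not: there is a genuine gap at your second main step, and it is not merely an unproved lemma but a false statement as written. The relation $\Theta$ you define (the Djokovi\'c--Winkler relation) is too fine: the partition of $E(G)$ into $\Theta^*$-classes is in general \emph{not} realised by any decomposition. Take $G = P_3$ with vertices $1,2,3$ and edges $e = 12$, $f = 23$. Then $d(1,2)+d(2,3) = 2 = d(1,3)+d(2,2)$, so $e$ and $f$ are not $\Theta$-related, and $\Theta^*$ has two singleton classes; but $P_3$ is prime, and a decomposition realising a two-class partition would force $P_3 = K_2 \ssquare K_2 = C_4$. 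The same failure occurs in every tree with at least two edges. Your first step is correct (distances in a Cartesian product add coordinatewise, so copies of edges from distinct factors are never $\Theta$-related, hence every decomposition's colouring is coarser than the $\Theta^*$-partition), but the converse direction needs a coarser canonical relation. The classical fix, as in Feder's and Imrich--Klav\v{z}ar's treatments, is to enlarge $\Theta$ by the relation $\tau$ that puts two \emph{incident} edges in relation whenever their far endpoints have no common neighbour besides the shared vertex (i.e., the two edges do not span a square): in a product, incident edges from different factors always lie on a unique square, so $\tau$-related edges belong to a common factor, and it is the transitive closure $(\Theta \cup \tau)^*$ whose classes are realised by a decomposition. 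With $\Theta$ alone your final refinement argument collapses, since the $\Theta^*$-partition of a prime graph can have several classes without contradicting primality.

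Beyond the wrong relation, note that even after this correction your second step is only announced, not carried out: the assertions that the classes behave as coordinate directions, that opposite edges of squares fall in one class, and that projection along a class yields well-defined factors are precisely the content of the Sabidussi--Vizing theorem, and your text says ``one must verify'' these points without verifying them. So the proposal is a roadmap whose signpost at the crucial junction points to the wrong relation; repaired to $(\Theta \cup \tau)^*$, it becomes the standard modern proof, but the hard convexity/square-lemma work would still have to be supplied before this counts as a proof.
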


Using some arguments of \cite{Fernandez2007} and the previous theorem, Imrich and Klav\v{z}ar proved the following theorem.

\begin{theorem}[Imrich and Klav\v{z}ar \cite{ImrichBook,Imrich2007b}]
\label{thm:cartesian:ordinari-semi-ring-cancellation}
%$(\mathcal G, \uplus, \square, K_0, K_1)$ is a commutative semi-ring with the cancellation property.
If $A$, $B$ and $C$ are three ordinary graphs such that $A\ssquare B = A \ssquare C$, then $B = C$.
\end{theorem}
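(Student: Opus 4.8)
The plan is to recast the statement as a cancellation law in a commutative semiring and then reduce it to cancellation in an integral domain. First I would record that the set $\mathcal{G}$ of isomorphism classes of finite graphs carries two commutative, associative operations: the disjoint union $\uplus$, with identity the empty graph $K_0$, and the Cartesian product $\ssquare$, with identity $K_1$. A direct check shows that $\ssquare$ distributes over $\uplus$ and that $K_0$ annihilates $\ssquare$, so $(\mathcal{G}, \uplus, \ssquare, K_0, K_1)$ is a commutative semiring; the entire proof amounts to identifying this semiring with a polynomial semiring.

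Two structural facts drive this identification. The first is that every graph is uniquely the disjoint union of its connected components, i.e. $(\mathcal{G}, \uplus)$ is the free commutative monoid on the set of connected graphs. The second is that the connected graphs are closed under $\ssquare$ — a Cartesian product is connected if and only if both factors are — so they form a submonoid $(\mathcal{C}, \ssquare, K_1)$; moreover, since every factor in the prime decomposition of a connected graph is itself connected, Theorem~\ref{thm:ordinari-prime-decompostion} says precisely that $(\mathcal{C}, \ssquare, K_1)$ is the free commutative monoid on the set $\mathcal{P}$ of connected prime graphs. Because $\ssquare$ is forced by distributivity from its restriction to the additive basis $\mathcal{C}$, these two facts combine to give a semiring isomorphism $(\mathcal{G}, \uplus, \ssquare) \cong \NN[x_p : p \in \mathcal{P}]$, the polynomial semiring with one variable per connected prime.

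With this dictionary the conclusion is immediate: $\NN[x_p : p \in \mathcal{P}]$ embeds in the polynomial ring $\ZZ[x_p : p \in \mathcal{P}]$, which is an integral domain. The hypothesis $A \ssquare B = A \ssquare C$ becomes $a \cdot b = a \cdot c$ for the polynomials $a, b, c$ attached to $A, B, C$; since $A$ has at least one vertex we have $a \neq 0$, and cancellation in an integral domain yields $b = c$, that is $B = C$. (Here one genuinely uses $A \neq K_0$, as the statement is false for the empty graph.)

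I expect no real obstacle inside this argument: the semiring axioms and the two freeness statements are routine, and the only delicate point is verifying that the bijection $\mathcal{G} \to \NN[x_p : p \in \mathcal{P}]$ is simultaneously additive and multiplicative. The mathematical weight of the theorem sits entirely in the multiplicative freeness of the connected graphs — unique prime factorization — which I take from Theorem~\ref{thm:ordinari-prime-decompostion}; the contribution of the arguments of Fernández, Leighton and López-Presa \cite{Fernandez2007} is exactly to upgrade this factorization into cancellation for the possibly disconnected case, which is what the integral-domain embedding accomplishes.
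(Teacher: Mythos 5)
Your proposal is correct and follows essentially the same route as the proof the paper relies on: the paper cites Imrich and Klav\v{z}ar and, in its remark after Theorem~\ref{thm:cancellation-property}, describes that proof as resting on exactly your two ingredients, namely the Sabidussi--Vizing unique prime factorization of connected graphs and the semiring structure of isomorphism classes under $\uplus$ and $\ssquare$, which (via the Fern\'{a}ndez--Leighton--L\'{o}pez-Presa technique) is identified with the polynomial semiring $\NN[x_p : p \in \mathcal{P}]$ and embedded in the integral domain $\ZZ[x_p : p \in \mathcal{P}]$. Your caveat that one needs $A \neq K_0$ (so that $a \neq 0$) is also the right one, and your reduction of multiplicativity of the bijection to connected factors via distributivity is exactly where the unique prime factorization is consumed.
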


The unicity of the prime decomposition raises the question of the complexity of finding such a decomposition. The complexity of decomposition algorithms has been extensively studied. The first algorithm, by Feigenbaum {\it et al.} \cite{Feigenbaum1985}, had a complexity of $O(n^{4.5})$ where $n$ is the order of the graph (its size is denoted by $m$). In \cite{Winkler1987}, Winkler proposed a different algorithm improving the complexity to $O(n^4)$. Then Feder \cite{Feder1992} gave an algorithm in $O(mn)$ time and $O(m)$ space. The same year, Aurenhammer {\it et al.} \cite{Aurenhammer1992} gave an algorithm in $O(m \log n)$ time and $O(m)$ space. The latest result is an optimal algorithm.

\begin{theorem}[Imrich and Peterin \cite{Imrich2007a}]
The prime factorization of connected ordinary graphs can be found in $O(m)$ time and space. Additionally a coordinate system can be computed in $O(m)$ time and space.
\end{theorem}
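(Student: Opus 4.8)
The plan is to separate the problem into two parts: a structural claim and an algorithmic one. Structurally, by Theorem~\ref{thm:ordinari-prime-decompostion} a connected graph has a unique prime decomposition, so there is a well-defined canonical partition of $E(G)$ in which each class collects all edges that are copies of the edges of a single prime factor (across all of that factor's layers). Computing the prime factorization is therefore equivalent to computing this canonical edge partition, and the coordinate system $\theta$ can then be read off from it. The whole task thus becomes: compute this partition, and the positions of each vertex within each factor, in $O(m)$ time and space.

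First I would fix a root $r$ and run a breadth-first search, recording for every vertex its distance from $r$ and a BFS spanning tree; this costs $O(m)$. The key structural tool is the \emph{square property} of Cartesian products: two edges incident to a common vertex but lying in different factors always span a unique $4$-cycle (a square) whose two opposite sides are copies of the same factor-edge. Exploiting squares lets me decide \emph{locally}, without computing all-pairs distances, when two incident edges must be forced into the same factor class.

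The core of the algorithm maintains a union--find structure over a tentative edge coloring. Processing vertices in BFS order, for each vertex $v$ I inspect its down-edges (towards $r$) together with the squares they span; whenever the square property forces two edges to lie in the same factor I union their color classes, and I propagate colors from lower BFS levels upward so that a $G_i$-layer through $r$ fixes the color of all its copies. The correctness argument is that this locally-enforced relation coincides with the global factor partition guaranteed by the unique decomposition theorem; the delicate point is showing that seeding the relation only from distances to the \emph{single} root $r$ --- rather than from the full Djokovi\'{c}--Winkler relation $\Theta$, whose naive computation needs all pairwise distances --- still yields the correct, finest product coloring.

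The main obstacle will be the linear-time bound, not correctness. A naive implementation that recomputes distances or repeatedly rescans squares costs $O(mn)$ or worse; the difficult part is the amortized analysis showing that each edge and each relevant square is touched only $O(1)$ times over the whole run, so that the union--find operations together with the layer-propagation sum to $O(m)$. Once the factor classes are fixed, the coordinate system is obtained during the same BFS: the $i$th coordinate of a vertex $v$ records the endpoint reached by projecting the $r$-to-$v$ tree path onto the $G_i$-layer through $r$, which can be accumulated in $O(m)$ total time and space.
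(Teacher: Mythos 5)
There is nothing in the paper to compare your attempt against: the statement is quoted verbatim from Imrich and Peterin \cite{Imrich2007a} and the paper gives no proof of it, using it only as a black box (its own Algorithm~\ref{algo} for signed graphs is built on top of it and on the directed-graph variant \cite{Imrich2018}). So your sketch has to stand on its own as a reconstruction of the cited result, and judged that way it is an outline with the two hardest steps acknowledged but not supplied. First, correctness: you assert that seeding the edge relation from local squares and a single-root BFS, rather than from the full Djokovi\'{c}--Winkler relation $\Theta$, still yields the finest product coloring, and you yourself call this ``the delicate point'' --- but that point is essentially the entire content of \cite{Imrich2007a}, and no argument is given. Note also that the square property must be used in the contrapositive direction for the algorithm to work: in a product, two incident edges in \emph{different} factors span a unique chordless square with opposite sides of equal colors, so the algorithm must \emph{merge} the colors of two incident edges whenever the expected square is missing, has a chord, or fails uniqueness; your text only states the direct implication and never specifies the merge rules, so one cannot verify that the transitive closure of your local unions equals the canonical factor partition guaranteed by Theorem~\ref{thm:ordinari-prime-decompostion}.

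Second, the complexity bound as you set it up does not reach $O(m)$: a generic union--find structure costs $O(m\,\alpha(m))$, which is not linear, and Imrich and Peterin attain true linearity precisely by \emph{not} using general union--find --- they exploit that the number of tentative factor colors is at most $\log_2 n$ and that colors are only ever merged monotonically, so the total relabeling/merging work telescopes; similarly, the claim that ``each relevant square is touched only $O(1)$ times'' requires a concrete mechanism (BFS-level structure and ordered adjacencies letting one locate the fourth vertex of a candidate square in amortized constant time without hashing), which you flag as the main obstacle but leave open. The final part of your sketch --- reading the coordinate system off projections of tree paths once the factor coloring is fixed --- is sound and matches how the coordinates are obtained in practice, but it is the easy part. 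In short: the architecture you describe (BFS order, square-based local merging, coordinates from the BFS tree) is the right one and is the same template the paper itself adapts for signed graphs, but as a proof of the $O(m)$ theorem it has a genuine gap at exactly the two places where the cited paper does its real work.
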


\section{Cartesian products of signed graphs}
\label{sec:cartesian:cartesian}
%\dimitri{intro}

\subsection{Definition}

We recall the definition of the Cartesian product of signed graphs due to Germina, Hameed K. and Zaslavsky:

\begin{definition}[\cite{Germina2011}]
%Let $(G,\sigma)$ and $(H,\pi)$ two signed graphs. We define the signed Cartesian product of $(G,\sigma)$ and $(H,\pi)$, and note $(G,\sigma)\ssquare (H,\pi)$, the signed graph with vertex set $V(G) \times V(H)$. It has for positive (\resp negative) edges the pairs $\set{(u_1,v_1),(u_2,v_2)}$ such that $u_1 = u_2$ and $v_1v_2$ is a positive (\resp negative) edge of $H$ or such that $v_1 = v_2$ and $u_1u_2$ is a positive (\resp negative) edge of $G$. The underlying graph of $(G,\sigma)\ssquare (H,\pi)$ is the ordinary graph $G\ssquare H$.
Let $(G,\sigma)$ and $(H,\pi)$ be two signed graphs. The {\em Cartesian product} of $(G,\sigma)$ and $(H,\pi)$, denoted by $(G,\sigma) \ssquare (H,\pi)$, is the signed graph defined as follows:
\begin{itemize}
    \item $V((G,\sigma) \ssquare (H,\pi)) = V(G) \times V(H)$,
    \item the positive (\resp negative) edges are the pairs $\set{(u_1,v_1),(u_2,v_2)}$ such that:
    \begin{itemize}
        \item $u_1 = u_2$ and $v_1v_2$ is a positive (\resp negative) edge of $(H,\pi)$, or
        \item $v_1 = v_2$ and $u_1u_2$ is a positive (\resp negative) edge of $(G,\sigma)$. 
    \end{itemize} 
\end{itemize}
\end{definition}

%\subsubsection{Basic properties}
Note that the underlying graph of $(G,\sigma)\ssquare (H,\pi)$ is the ordinary graph $G\ssquare H$.
From this definition, we can derive that the Cartesian product is associative and commutative. %In particular $(\mathbb{SG},\uplus,\square,K_0,K_1)$ is a commutative semi-ring \todo{ref def ? ou dans section 2 ? ou remove ?} where $\uplus$ is the disjoint union for signed graphs.
%the following property of the product:
%
%\begin{property}
%\label{property:cartesianMonoid}
%Let $G$, $H$, $I$ be three signed graph. We have the following properties:
%\begin{enumerate}
%\item (commutativity) the signed graphs $G\ssquare H$ and $H\ssquare G$ are isomorphic,
%\item (neutral element) $G\ssquare K_1$ is isomorphic to $G$,
%\item (associativity) $(G\ssquare H)\ssquare I$ and $G\ssquare (H\ssquare I)$ are isomorphic.
%%\item (cancellation property) if $G\ssquare H$ and $G\ssquare I$ are isomorphic then $H$ and $I$ are isomorphic.
%\end{enumerate}
%%In what follows, we consider the two graph $G\ssquare H$ and $H\ssquare G$ to be the same graph.
%Thus $(\mathcal{SG},\square,K_1)$ forms a commutative monoid
%% with the cancellation property 
%where $\mathcal{SG}$ is the set of all signed graphs.
%
%If $\uplus$ is the disjoint union for signed graphs, then $(\mathcal{SG},\uplus,\square,K_0,K_1)$ is a commutative semi-ring.
%\end{property}
%In what follows we will write $(G_1,\sigma_1) \ssquare \dots \ssquare (G_k,\sigma_k)$ without parenthesis by associativity of the product. 

%\subsection{Compatibility with switching and signed homomorphism}

The following result shows that Cartesian products are compatible with  homomorphisms of signed graphs and in particular with the switching operation.

\begin{theorem}
\label{thm:cartesianHomCompatibility}
If $(G,\sigma)$, $(G',\sigma')$, $(H,\pi)$, $(H',\pi')$ are four signed graphs such that $(G,\sigma) \sto (G',\sigma')$ and $(H,\pi) \sto (H',\pi')$, then: $$(G,\sigma) \ssquare (H,\pi) \sto (G',\sigma') \ssquare (H',\pi').$$
\end{theorem}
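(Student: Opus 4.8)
The plan is to build the product homomorphism coordinatewise from the two given homomorphisms and then verify it respects balance. Let $\varphi: V(G) \to V(G')$ realize $(G,\sigma) \sto (G',\sigma')$ and $\psi: V(H) \to V(H')$ realize $(H,\pi) \sto (H',\pi')$. I would define $\Phi: V(G)\times V(H) \to V(G')\times V(H')$ by $\Phi(u,v) = (\varphi(u),\psi(v))$ and claim $\Phi$ is a homomorphism of $(G,\sigma)\ssquare(H,\pi)$ to $(G',\sigma')\ssquare(H',\pi')$.

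The cleanest route is to use the second (switching-based) characterization of a signed homomorphism stated in the excerpt. First I would fix switched signatures witnessing each map: choose $\sigma'' \equiv \sigma$ on $G$ so that $\sigma'(\varphi(e)) = \sigma''(e)$ for every edge $e$ of $G$, obtained by switching some set $X \subseteq V(G)$, and similarly choose $\pi'' \equiv \pi$ on $H$ via switching some $Y \subseteq V(H)$ with $\pi'(\psi(f)) = \pi''(f)$ for every edge $f$ of $H$. The key intermediate step is to show that switching the vertex set $X \times V(H)$ followed by $V(G) \times Y$ in the product $(G,\sigma)\ssquare(H,\pi)$ produces exactly the product signature built from $\sigma''$ and $\pi''$, i.e.\ $(G,\sigma)\ssquare(H,\pi) \equiv (G,\sigma'')\ssquare(H,\pi'')$. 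This is a direct check: a vertical edge $\{(u,v_1),(u,v_2)\}$ copies the $H$-edge $v_1v_2$, and switching $V(G)\times Y$ flips it precisely when exactly one of $v_1,v_2$ lies in $Y$, matching how $\pi''$ arises from $\pi$; the switches coming from $X\times V(H)$ leave vertical edges unchanged since both endpoints share the same $X$-status. A symmetric statement holds for horizontal edges, so the combined switching yields the product of the two switched signatures.

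Once this equivalence is established, the verification is immediate. For any edge of $(G,\sigma'')\ssquare(H,\pi'')$, say a horizontal copy of $e=u_1u_2$ with sign $\sigma''(e)$, its image under $\Phi$ is the horizontal copy of $\varphi(e) = \varphi(u_1)\varphi(u_2)$ in $(G',\sigma')\ssquare(H',\pi')$, which carries sign $\sigma'(\varphi(e)) = \sigma''(e)$; the vertical case is symmetric using $\psi$ and $\pi''$. Thus $\Phi$ preserves edges and preserves signs with respect to the witnessing signature $(G,\sigma'')\ssquare(H,\pi'')\equiv(G,\sigma)\ssquare(H,\pi)$, which by the switching characterization is exactly what it means for $\Phi$ to be a homomorphism $(G,\sigma)\ssquare(H,\pi) \sto (G',\sigma')\ssquare(H',\pi')$.

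The main obstacle I anticipate is the bookkeeping in the intermediate switching step, namely verifying that switching $X\times V(H)$ and $V(G)\times Y$ in the product reproduces the two independent switches on the factors without interference between horizontal and vertical edges. This is not conceptually deep but requires care: one must confirm that a switch applied to a ``full layer'' set like $X \times V(H)$ affects only horizontal edges (those projecting to an $X$-boundary edge of $G$) and acts trivially on vertical edges, and conversely for $V(G)\times Y$. Handling this via the balance-of-closed-walks characterization is an alternative, but I expect the explicit signature computation to be the most transparent, so that is the approach I would carry out.
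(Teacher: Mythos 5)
Your proposal is correct and follows essentially the same approach as the paper: both proofs lift the switching set of a factor to a ``full layer'' set in the product ($X \times V(H)$, respectively $V(G) \times Y$) and verify that such a switch changes only the horizontal (respectively vertical) edge signs, then map coordinatewise. The only difference is organizational --- the paper handles one factor at a time and invokes commutativity of $\ssquare$ plus composition of homomorphisms, whereas you treat both coordinates simultaneously with the (correctly verified) non-interference check --- which does not change the substance of the argument.
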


\begin{proof}
By commutativity of the Cartesian product and composition of homomorphisms, it suffices to  show that $(G,\sigma) \ssquare (H,\pi) \sto (G',\sigma') \ssquare (H,\pi)$. Since $(G,\sigma) \sto (G',\sigma')$, there exists a set $S$ of vertices and a homomorphism $\varphi$ of $G$ to $G'$ such that if $(G,\sigma_S)$ is the signed graph obtained from $\G$ by switching the vertices of $S$, then $\sigma'(\varphi(e)) = \sigma_S(e)$ for every edge $e$ of $G$.
We note $P = (G,\sigma) \ssquare (H,\pi)$ and $X = \setcond{(g,h) \in V(G\ssquare H)}{ g \in S}$. Let $P'$ be the signed graph obtained from $P$ by switching the vertices in $X$.

If $(g,h)(g,h')$ is an edge of $P$, then in $P'$ this edge was  either switched twice if $g \in S$ or not switched if $g \notin S$. In both cases its sign did not change. If $(g,h)(g',h)$ is an edge of $P$, then in $P'$ this edge was   switched twice if $g,g' \in S$, switched once if $g \in S$, $g' \notin S$ or $g \notin S$, $g' \in S$, and not switched if $g,g' \notin S$. In each case its new sign is $\sigma_S(gg')$. Thus $P' = (G,\sigma_S) \ssquare (H,\pi)$. Now define $\varphi_P(g,h) = (\varphi(g),h)$. It is a homomorphism of $G \ssquare H$ to $G' \ssquare H$ by definition. By construction, the target graph of $\varphi_P$ is $(G',\sigma') \ssquare (H,\pi)$ as the edges of $H$ do not change and the target graph of $\varphi$ is $(G',\sigma')$.
%It suffices to show that the Cartesian product is compatible with the switching operation and the identification operation. If $(P,\tau) = (G,\sigma) \ssquare (H,\pi)$ and we switch at a vertex $v$ of $(G,\sigma)$ to get $(G,\sigma')$, then it suffices to switch at all vertices representing this vertex in $(P,\tau)$ (\ie all $\setcond{(v,u)}{u \in H}$) to get the signed graph $(P',\tau')$ with $(P',\tau') = (G',\sigma') \ssquare (H',\pi')$. 
%To identify two vertices $v_1$ and $v_2$ of $(G,\sigma)$, it suffices to identify the corresponding vertices of the product two by two (\ie for all $u\in H$ identify $(v_1,u)$ and $(v_2,u)$).
\end{proof}

As mentioned before, we can derive the following corollary from Theorem~\ref{thm:cartesianHomCompatibility}.

\begin{corollary}
If $(G,\sigma)$, $(G,\sigma')$, $(H,\pi)$, $(H,\pi')$ are four signed graphs such that $\sigma \equiv \sigma'$ and $\pi \equiv \pi'$, then: $$(G,\sigma) \ssquare (H,\pi) \equiv (G,\sigma') \ssquare (H,\pi').$$
\end{corollary}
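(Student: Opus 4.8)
The plan is to derive this corollary directly from Theorem~\ref{thm:cartesianHomCompatibility}, exploiting the fact that switching-equivalence is nothing more than the existence of homomorphisms in both directions. The key observation is that if $\sigma \equiv \sigma'$, then by definition $(G,\sigma)$ is obtained from $(G,\sigma')$ by switching some vertex set, and switching is itself realized by the identity map on $V(G)$, which is a homomorphism of signed graphs in both directions. Hence $(G,\sigma) \sto (G,\sigma')$ and $(G,\sigma') \sto (G,\sigma)$, and similarly $(H,\pi) \sto (H,\pi')$ and $(H,\pi') \sto (H,\pi)$.

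First I would record this reformulation explicitly: the relation $\equiv$ is exactly the conjunction of $\sto$ in both directions when the underlying graphs are equal, because a homomorphism whose underlying graph map is the identity and which respects balance is precisely a switching. Concretely, to switch a set $X$ producing $(G,\sigma')$ from $(G,\sigma)$ is to exhibit the identity as a signed homomorphism $(G,\sigma) \sto (G,\sigma')$, since the identity maps balanced closed walks to balanced closed walks (switching preserves balance, as noted after Observation~\ref{lemma:balancing-and-homorphism}).

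Next I would apply Theorem~\ref{thm:cartesianHomCompatibility} twice. Feeding in $(G,\sigma) \sto (G,\sigma')$ and $(H,\pi) \sto (H,\pi')$ yields
\[
(G,\sigma) \ssquare (H,\pi) \sto (G,\sigma') \ssquare (H,\pi').
\]
Feeding in the reverse homomorphisms $(G,\sigma') \sto (G,\sigma)$ and $(H,\pi') \sto (H,\pi)$ yields the opposite mapping
\[
(G,\sigma') \ssquare (H,\pi') \sto (G,\sigma) \ssquare (H,\pi).
\]
Both of these homomorphisms are realized by the identity on the common vertex set $V(G)\times V(H)$, since each factor map was the identity and the product map $\varphi_P$ constructed in Theorem~\ref{thm:cartesianHomCompatibility} is $(g,h)\mapsto(\varphi(g),h)$, which is the identity when $\varphi$ is.

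Finally I would conclude that two signed graphs on the same underlying graph admitting an identity homomorphism in each direction must be switching-equivalent; since both products share the underlying graph $G \ssquare H$ and the identity map realizes the homomorphism in both directions, the two products differ only by a switching, giving $(G,\sigma)\ssquare(H,\pi)\equiv(G,\sigma')\ssquare(H,\pi')$. The only point requiring care — and the step I would state most carefully — is verifying that the homomorphism produced is genuinely the identity on vertices (so that the two products really are on the same underlying graph and related by a switch, rather than merely admitting mutual homomorphisms of possibly different structure); this follows because switching-equivalence feeds identity vertex maps into Theorem~\ref{thm:cartesianHomCompatibility}, and the construction there preserves this. I do not expect any substantive obstacle: the entire content is the bidirectional translation between $\equiv$ and $\sto$ together with one invocation of the preceding theorem.
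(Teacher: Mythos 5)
Your proposal is correct and is essentially the paper's own derivation: the paper gives no separate proof, deriving the corollary from Theorem~\ref{thm:cartesianHomCompatibility} exactly as you do, by viewing switching-equivalence as a signed homomorphism with identity vertex map and noting that the construction $\varphi_P(g,h) = (\varphi(g),h)$ in the theorem's proof (which explicitly shows the switched product equals $(G,\sigma_S)\ssquare(H,\pi)$) keeps the vertex map the identity, so the two products differ only by a switch. One small economy: a single identity homomorphism already forces equivalence, since it preserves the balance of every closed walk and signatures with the same balanced closed walks are equivalent by Zaslavsky's characterization cited in the paper, so your second (reverse) application of the theorem is redundant but harmless.
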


%One first observation is that we can apply Theorem~\ref{thm:cartesianHomCompatibility} to the case of forests.
From Theorem~\ref{thm:cartesianHomCompatibility}, and the fact that $(F,\sigma) \sto K_2^+$ for every signed forest $(F,\sigma)$, we also get the following corollary:

\begin{corollary}
If $\G$ is a signed graph and $(F,\pi)$ is a signed forest with at least one edge, then:
$$ \chi_s(\G\ssquare (F,\pi)) = \chi_s(\G\ssquare K_2^+). $$
In particular, for $n,m \geq 2$, $\chi_s((P_n,\sigma_1) \ssquare (P_m,\sigma_2)) = 2$.
\end{corollary}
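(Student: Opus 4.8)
The plan is to show that $\G \ssquare (F,\pi)$ and $\G \ssquare K_2^+$ are homomorphically equivalent — each admits a homomorphism to the other — and then to invoke the monotonicity of $\chi_s$ under $\sto$ to conclude that their chromatic numbers coincide. Recall the standard fact that if $A \sto B$ then $\chi_s(A) \le \chi_s(B)$: any homomorphism of $B$ to an optimal target of order $\chi_s(B)$ composes with $A \sto B$ to give a homomorphism of $A$ to that same target. Applying this in both directions is what will force the equality rather than a mere inequality.

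First I would establish the two homomorphisms between $(F,\pi)$ and $K_2^+$. Since $(F,\pi)$ is a signed forest, every signature of $F$ is balanced, so $(F,\pi) \equiv (F,\varnothing)$; as $F$ is bipartite, sending its two color classes to the two endpoints of the positive edge of $K_2^+$ gives $(F,\pi) \sto K_2^+$ (with the all-positive signature, every edge maps to a positive edge). Conversely, because $(F,\pi)$ has at least one edge, the equivalent all-positive signed graph $(F,\varnothing)$ contains a positive edge $ab$, and mapping the two vertices of $K_2^+$ to $a$ and $b$ yields $K_2^+ \sto (F,\pi)$.

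Next I would feed these into Theorem~\ref{thm:cartesianHomCompatibility} together with the identity homomorphism $\G \sto \G$. From $(F,\pi) \sto K_2^+$ we obtain $\G \ssquare (F,\pi) \sto \G \ssquare K_2^+$, and from $K_2^+ \sto (F,\pi)$ we obtain $\G \ssquare K_2^+ \sto \G \ssquare (F,\pi)$. Combined with the monotonicity of $\chi_s$ noted above, this gives $\chi_s(\G \ssquare (F,\pi)) = \chi_s(\G \ssquare K_2^+)$, which is the main identity.

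For the ``in particular'' statement I would apply the identity first with $\G = (P_n,\sigma_1)$, which is itself a signed forest, obtaining $\chi_s((P_n,\sigma_1) \ssquare (P_m,\sigma_2)) = \chi_s((P_n,\sigma_1) \ssquare K_2^+)$. A second application — using commutativity of the product and taking $\G = K_2^+$, $(F,\pi) = (P_n,\sigma_1)$ — reduces this to $\chi_s(K_2^+ \ssquare K_2^+)$. Since $K_2^+ \ssquare K_2^+$ is the all-positive four-cycle, i.e.\ a member of $BC_{even}$, Theorem~\ref{thm:cyclespasproduit} gives chromatic number $2$. I do not expect any genuine obstacle here; the only points requiring care are remembering to produce homomorphisms in \emph{both} directions so that the chromatic numbers are forced to be equal, and checking that the reverse homomorphism $K_2^+ \sto (F,\pi)$ genuinely uses the hypothesis that $(F,\pi)$ has at least one edge.
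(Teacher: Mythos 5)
Your proposal is correct and takes essentially the same route as the paper, which derives the corollary directly from Theorem~\ref{thm:cartesianHomCompatibility} together with the fact that $(F,\pi) \sto K_2^+$ for every signed forest. You merely make explicit two points the paper leaves implicit: the reverse homomorphism $K_2^+ \sto (F,\pi)$ (using that $(F,\pi)$ has an edge and every signed forest is equivalent to the all-positive one), which is indeed needed to force equality rather than just $\chi_s(\G \ssquare (F,\pi)) \leq \chi_s(\G \ssquare K_2^+)$, and the identification of $K_2^+ \ssquare K_2^+$ with $BC_4$ for the ``in particular'' part.
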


% \begin{proof}
% As $F$ is a forest, it is equivalent to an all blue forest. Moreover, this forest is 2-colorable thus $F \sto K_2$. Theorem~\ref{thm:cartesianHomCompatibility} allows us to conclude.
% In particular, we get that $\chi_s(P_n\ssquare P_m) = \chi_s(K_2\ssquare K_2) = \chi_s(BC_4) = 2$.
% \end{proof}

\subsection{Signed grids}

Note that there is a difference between considering the chromatic number of the Cartesian product of two signed graphs and the chromatic number of a signed graph whose underlying graph is a Cartesian product. 
For example, $C_4 = K_2 \ssquare K_2$ but $4 = \chi_s(UC_4) \neq \chi_s(BC_4) = 2$.
Another example comes from grid graphs: $\chi_s((P_n,\sigma_1) \ssquare (P_m,\sigma_2)) = 2$, for any $n,m \in \NN$, but the following theorem shows that not all signed grids have chromatic number $2$.

\begin{theorem}
\label{th:grid}
If $n$ and $m$ are two integers with $1 \leq n \leq m$ and $(G,\sigma)$ is a signed grid with $G = P_n\ssquare P_m$, then $\chi_s(G) \leq 6$. If $n \leq 4$, then $\chi_s(G) \leq 5$. Moreover there exist signed grids with chromatic number $5$.
\end{theorem}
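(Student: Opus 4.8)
\textbf{Proof plan for Theorem~\ref{th:grid}.}

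The plan is to establish the upper bounds by explicitly constructing homomorphisms of any signed grid to a fixed signed target graph of the required order, exploiting the fact that the underlying grid $P_n \ssquare P_m$ is bipartite and that its cycle space is generated by the unit $4$-cycles (the faces of the grid). First I would observe that, since a signed graph is determined up to switching by which cycles are balanced and which are unbalanced (Observation~\ref{lemma:balancing-and-homorphism} together with Zaslavsky's characterization quoted in the preliminaries), the relevant data of $(G,\sigma)$ is captured entirely by the signs of the $4$-faces. Each face is either balanced or unbalanced, so a signed grid is, up to equivalence, encoded by a $\{+,-\}$-labelling of the faces of the $(n-1)\times(m-1)$ face-array. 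This reduces the problem to coloring grids whose face-balance pattern is arbitrary.

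For the bound $\chi_s(G) \le 6$, I would exhibit a universal signed target $(T,\pi)$ on $6$ vertices such that \emph{every} signed grid maps to it. The natural candidate is built on the bipartition of the grid: color the two sides of the bipartition with disjoint palettes, and arrange the target so that each face, whether balanced or unbalanced, can be realized by choosing edge-signs consistently. Concretely, I would proceed column by column (or row by row), coloring vertices of $P_n \ssquare P_m$ so that the coloring of column $j+1$ is forced, up to the face-signs, by that of column $j$; I would verify that $6$ colors suffice to keep the induced edge-signs well-defined between any two color classes (the defining condition of a $k$-coloring in the excerpt). The bipartite structure means within a single path-layer only two colors per side are needed; the extra colors provide the degrees of freedom to accommodate both balanced and unbalanced faces simultaneously. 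When $n \le 4$, the layers $P_n$ are short enough that one of the six colors becomes redundant, giving $\chi_s(G) \le 5$; I would check the cases $n \in \{2,3,4\}$ by showing the column-propagation never exhausts more than $5$ colors.

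The hard part will be the lower bound: producing a specific signed grid with $\chi_s(G) = 5$. Here I would first fix a candidate signature $\sigma$ on some $P_n \ssquare P_m$ (likely with $n = 5$ or similar, with a carefully chosen pattern of unbalanced faces, perhaps using unbalanced $4$-cycles densely enough to force many distinct edge-sign patterns), and then argue that no homomorphism to a signed graph on $4$ vertices exists. To rule out $4$-colorings I would combine two constraints: the underlying coloring must be proper on the bipartite grid, and the signed-coloring condition forces all edges between a fixed pair of color classes to share a sign. I would analyze the possible signed graphs on $4$ vertices (up to switching equivalence there are few, e.g. subgraphs of $K_4$ with various signatures such as $\KIVMixed$) and show that the chosen face-pattern creates an unbalanced $4$-cycle configuration that cannot be mapped into any of them — typically by finding a closed walk whose balance forces a contradiction between the sign-consistency requirement and the bipartite color constraint. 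I expect the lower bound to require the most care, since it amounts to a finite but delicate case analysis of homomorphisms into all signed graphs of order $4$; the upper-bound constructions, by contrast, should follow routinely once the universal target and the column-propagation scheme are in place.
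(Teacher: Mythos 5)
Your overall architecture --- a universal order-$6$ target with column-by-column propagation, a smaller order-$5$ target when $n\le 4$, and an explicit grid ruled out against all order-$4$ targets --- matches the paper's, and the reduction to the face-balance pattern is sound. The genuine gap is in the propagation step of your upper bound. Note that in any \emph{simple} target, a closed walk of the form $x,y,x,w$ traverses each of its edges twice and is therefore balanced; consequently, whenever a face of the grid is a $UC_4$, its two corners lying on the same side of the bipartition must receive \emph{distinct} colors in the image. So the coloring of column $j+1$ is not simply ``forced, up to the face-signs, by that of column $j$'': when you come to color $x_{i,j+1}$ and the face $x_{i,j+1}x_{i,j}x_{i-1,j}x_{i-1,j+1}$ is unbalanced while $\varphi(x_{i,j})=\varphi(x_{i-1,j+1})$, no extension exists, and nothing in your scheme prevents this collision from arising. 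This is exactly the difficulty the paper's proof is built around: its target $SPal^*_5$ (a signed $K_5$ plus an extra vertex --- non-bipartite, unlike your $3+3$ candidate, whose existence you never establish) satisfies property $(\mathcal{P})$, which guarantees \emph{two} distinct admissible choices at every extension step, and the collision above is resolved by backtracking one step and taking the alternative choice made there. Without an analogue of this two-choice/backtrack mechanism, your greedy propagation has no reason to go through; this is the missing idea, not a routine verification.

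The $n\le 4$ case suffers from the same optimism: ``one of the six colors becomes redundant'' is not a mechanical consequence of short columns. The paper maps to $SPal_5$ by a careful count of surviving options down a column --- at least three choices for $x_{1,j}$, then at least three for $x_{2,j}$ of which one must be discarded to pre-empt a potential $UC_4$ conflict with $x_{3,j-1}$, then at least two minus one for $x_{3,j}$, leaving at least one for $x_{4,j}$ --- and the paper explicitly notes that this counting breaks down for larger $n$ (indeed, whether the right bound for general grids is $5$ or $6$ is left open). Your lower-bound plan does coincide in substance with the paper's: it exhibits the specific small grid of Figure~\ref{fig:grid-counterexample} and verifies by a finite (``tedious but not difficult'') case analysis that it admits no homomorphism to any signed graph of order $4$; note also that the paper's example fits within $n\le 4$, showing the order-$5$ bound is tight already there, so your inclination to look at $n=5$ is unnecessary. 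That part of your outline is fine, modulo actually carrying out the check.
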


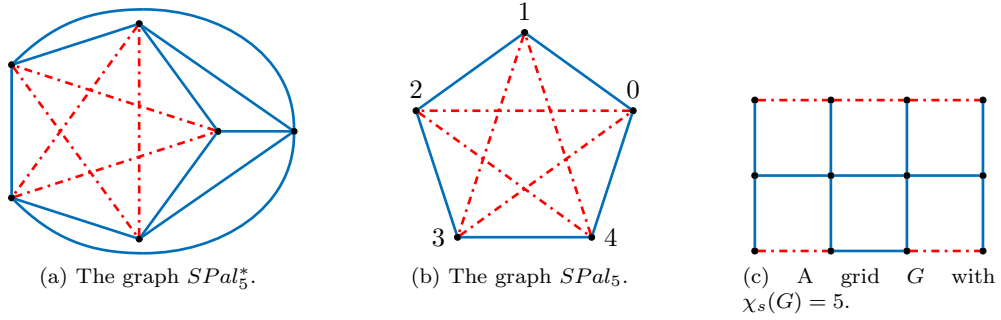
\begin{figure}
\centering
\subfloat[The graph $SPal^*_5$. \label{fig:spal5*}]{
\begin{tikzpicture}
\tikzstylemacro{}

\node[n] (1) at (0:1.5) {};
\node[n] (2) at (72:1.5) {};
\node[n] (3) at (144:1.5) {};
\node[n] (4) at (216:1.5) {};
\node[n] (5) at (288:1.5) {};

\node[n] (0) at (0:2.5) {};

\draw[g] (1) -- (2) -- (3) -- (4) -- (5) -- (1);
\draw[r] (1) -- (3) -- (5) -- (2) -- (4) -- (1);

\coordinate (i) at (72:1.7) {};
\coordinate (j) at (288:1.7) {};

\draw (0) 	edge[g] (1) 
			edge[g] (2) 
			edge[g, in =0, out = 90] (i) 
			edge[g, in =0, out=270] (j) 
			edge[g] (5);
			
\draw (3) edge[g, in =180, out=45] (i);
\draw (4) edge[g, in=180, out = 315] (j);
\end{tikzpicture}
}\hspace{1cm}
\subfloat[The graph $SPal_5$. \label{fig:spal5}]{
\begin{tikzpicture}
\tikzstylemacro{}

\node[n, label=$0$] (1) at (18:1.5) {};
\node[n, label=$1$] (2) at (72 +18:1.5) {};
\node[n, label=$2$] (3) at (144+18:1.5) {};
\node[n, label=left:$3$] (4) at (216+18:1.5) {};
\node[n, label=right:$4$] (5) at (288+18:1.5) {};

\draw[g] (1) -- (2) -- (3) -- (4) -- (5) -- (1);
\draw[r] (1) -- (3) -- (5) -- (2) -- (4) -- (1);

\end{tikzpicture}
}\hspace{1cm}
\subfloat[A grid $G$ with $\chi_s(G) = 5$.  \label{fig:grid-counterexample}]{
\begin{tikzpicture}
\tikzstylemacro{}

\foreach \i in {1,2,3,4}{
	\foreach \j in {1,2,3}{
		\node[n] (\i\j) at (\i,\j) {};
	}
	\draw[g] (\i1) -- (\i2) -- (\i3);
}

\draw[g] (12) -- (22) -- (32) -- (42);
\draw[g] (21) -- (31);
\draw[r] (11) -- (21);
\draw[r] (31) -- (41);
\draw[r] (13) -- (23) -- (33) -- (43);

\end{tikzpicture}
}
\caption{The signed graphs used in the proof of Theorem~\ref{th:grid}.}
\label{fig:grid}
\end{figure}
On our figures, we use dashed red edges to represent negative edges and solid blue edges for positive edges. 

\begin{proof}
We will prove a more precise statement: every signed grid $(G,\sigma)$ verifies $(G,\sigma) \sto SPal^*_5$ where $SPal^*_5$ is the graph of Figure~\ref{fig:spal5*}. This graph has the following (easy to check) property: 
% \begin{equation}
%     $\forall x,y,z \in V(SPal^*_5), \forall \epsilon \in \set{+1,-1}, \text{ if } x \neq z\text{ or } \epsilon = +1 \text{ then } \exists u,v \in SPal^*_5, \text{ with } u \neq v \text{ such that } \sigma(xyzu) = \sigma(xyzv) = \epsilon.$
% \end{equation}
\begin{itemize}
    \item[($\mathcal{P}$)] for every three vertices $x$,$y$,$z$ of $SPal^*_5$, and every sign $\epsilon \in \set{+1,-1}$, if $x \neq z$ or $\epsilon = +1$ then  there exists $u$ and $v$ in $SPal^*_5$, $u \neq v$, such that the cycles $xyzu$ and $xyzv$ have sign $\epsilon$.
\end{itemize}

 To map $(G,\sigma)$ to $SPal^*_5$, we will construct the homomorphism $\varphi$ by induction. The vertex  of $G$ in line $i \in \set{1,\dots,n}$ and column $j \in \set{1,\dots,m}$ will be called $x_{i,j}$. Let $H_{i,j}$ be the subgraph of $G$ induced by the vertices $x_{k,\ell}$ where $k < i$, or $k = i$ and $\ell \leq j$.
 %We order the vertices by increasing $(i,j)$ for the lexicographical order. We set $H$ to be the set of vertices of the form $x_{0,j}$ for $j \leq m$. 
% We will successively add vertices to $H$. 
We prove that for all $i,j$, $0\leq i \leq n$ and $0\leq j \leq m$,  $H_{i,j} \sto Spal^*_5$.
 %At every step of the algorithm, we have a signed homomorphism from $H$ to $SPal^*_5$.
 It is easy to see that $H_{0,m} \sto SPal^*_5$.
 
If $H_{i,m} \sto Spal^*_5$ and $i<n$, then $x_{i+1,0}$ has only one neighbor in $H_{i+1,0}$ and we can extend the previous homomorphism to $H_{i+1,0}$.
 
 Suppose that $\varphi$ is a homomorphism of $H_{i,j}$ to $Spal^*_5$ and $j<m$.
 %If $j=0$, then up to switching $x_{i,j}$, we can extend $\varphi$ to $H \cup \set{x_{i,j}}$.
 Let  $C = x_{i,j+1}x_{i,j}x_{i-1,j}x_{i-1,j+1}$. If $C = BC_4$ or if $C = UC_4$ and $\varphi(x_{i,j}) \neq \varphi(x_{i-1,j+1})$, then we have two choices for $x_{i,j+1}$ by $\mathcal P$ (we might need to switch $x_{i,j}$) and we can extend the homomorphism to $H_{i,j+1}$.  
 %If  then we are in the same case. 
 If $C = UC_4$ and $\varphi(x_{i,j}) = \varphi(x_{i-1,j+1})$, then  these two vertices must be different. There were two possibilities for the choice of $\varphi(x_{i,j-1})$ in the previous step by  $\mathcal P$ thus if we take the other one, we are back to the previous case where $\varphi(x_{i,j-1}) \neq \varphi(x_{i-1,j})$.
 Thus we can extend $\varphi$ to $H_{i,j+1}$.
 
 Hence,  $H_{n,m} = (G,\sigma) \sto SPal^*_5$  which gives $\chi_s(G,\sigma) \leq 6$.
 
 \ 
 
 Suppose now that $n \leq 4$. We construct a homomorphism $\varphi : (G,\sigma) \sto SPal_5$, column by column, where $SPal_5$ is the graph of Figure~\ref{fig:spal5}. The first column is a path and thus, we can map it arbitrarily to $SPal_5$. For a column with vertices $x_{1,j}, x_{2,j}, x_{3,j}, x_{4,j}$, there are at least three  possibilities to map $x_{1,j}$ (the three colors different from $\varphi(x_{1,j-1})$ and $\varphi(x_{2,j-1})$). Up to symmetry, we can suppose that $x_{1,j-1}$ has color $0$ and $x_{1,j-1}$ has color $1$. These three possibilities ($2$, $3$, $4$) give at least three possibilities for $x_{2,j}$ where we need to remove one of them to account for the possibility of a $UC_4$ forcing $x_{2,j}$ and $x_{3,j-1}$ to have different images. Indeed, let $C = x_{1,j}x_{2,j}x_{2,j-1}x_{1,j-1}$, if  $C = BC_4$ then $x_{2,j}$ can have colors $1$, $2$ or $3$. If $C = UC_4$, then  $x_{2,j}$ can have colors $2$, $3$ or $4$.
 
 Again for $x_{3,j}$ there are at least two possibilities by the same kind of arguments. We need to remove one of them to account for the possibility of a $UC_4$ forcing $x_{3,j}$ and $x_{4,j-1}$ to have different colors. Finally there is at least one possibility for $x_{4,j}$. Thus we can extend our homomorphism to this column.
 This implies that $G \sto SPal_5$. 
 
%\dimitri{J'ai pas d'argument simple pour l'affirmation qui suit :(}
 It is tedious but not difficult to check that the signed grid of Figure~\ref{fig:grid-counterexample} cannot be mapped to a signed graph of order $4$, thus its chromatic number if at least $5$. In fact it is exactly $5$. This concludes the proof.
\end{proof}

The arguments for the existence of a homomorphism to $SPal_5$ cannot be extended to bigger grids as we could end up in the case where $x_{4,j}$ has no possible image. We do not know if the upper bound for grids is $5$ or $6$. %Note that we are not aware of a signed grid with  chromatic number six.

%\aftercaldam{Due to page constraints, we do not include 
%The proof of this result is given in appendix \ref{appendix:grid}.
%the proof of this result. Nonetheless, the proof is available in the full version of this paper which you can find on the author's web page.} 

\begin{question}
What is the maximal value of $\chi_s(G,\sigma)$ when $(G,\sigma)$ is a signed grid? Is it $5$ or $6$?
\end{question}

\subsection{Prime factor decomposition}
 
Our goal now is to prove that each connected signed graph has a unique prime $s$-decomposition. Let us start with some definitions.

%We consider our graphs up to isomorphisms. 
%\begin{definition}
%To a product $G_1 \ssquare \dots \ssquare G_k$, we can associate its {\em labelling} $\theta : V( G_1 \ssquare \dots \ssquare G_k) \to V(G_1) \times \dots \times V(G_k)$ which associate to a vertex its coordinates in the product. 
%\end{definition}

\begin{definition}
A signed graph $(G,\sigma)$ is said to be {\em $s$-prime} if and only if there do not exist two signed graphs $(A,\pi_A)$ and $(B,\pi_B)$ such that $(G,\sigma) \equiv (A,\pi_A)\ssquare (B,\pi_B)$. 
An {\em $s$-decomposition}  of a signed connected graph $(G,\sigma)$ is a multiset of signed graphs $D = \set{(G_1,\pi_1),\dots,(G_k,\pi_k)}$ such that:
\begin{enumerate}
\item the $(G_i,\pi_i)$'s are signed graphs containing at least one edge and
\item $(G,\pi) \equiv (G_1,\pi_1) \ssquare \cdots\ssquare (G_k,\pi_k)$.
\end{enumerate}
%To an $s$-decomposition $D$ of $G$, we can associate a labelling $\theta_D$ of $G$. If $I$ is an isomorphism from $G$ to $G_1 \ssquare \cdots\ssquare G_k$ and $\theta$ the labelling of this product then $\theta_D$ is defined by $\theta \circ I$.
An $s$-decomposition $D$ is {\em prime} if all the $(G_i,\pi_i)$'s are $s$-prime.
The $(G_i,\pi_i)$'s are called {\em factors} of $D$. %An $s$-decomposition $D$ is {\em finer} than another $s$-decomposition $D'$ if $D' = \set{(G_1,\pi_1),\dots,(G_k,\pi_k)}$ and for each $i \in \setk{1,k}$ there exists an $s$-decomposition $D_i$ of $(G_i,\pi_i)$ such that $D = \bigcup_i D_i$. \aftercaldam{Recall that, if $D$ is finer than $D'$, then we may have $D= D'$.}
%Moreover, we say that $\theta_D$ is a {\em finer labelling} than $\theta_{D'}$ if there exists $\phi_1, \dots, \phi_k$ such that $\theta_D(v) = (\phi_1,\dots,\phi_k) \circ \theta_{D'}$. 
%

\end{definition}

%\begin{definition}
%If $(G,\sigma)$ is a signed graph such that $G = G^1 \ssquare \dots G^k$ is one decomposition $D$ of its underlying graph $G$ which is finer than $D_p$ the prime $s$-decomposition of $G$. We say that two copies $X_1$ and $X_2$ of $G_u^i$ are {\em adjacent} by $G_u^j$ if and only if there exists $ab \in E(G_u^j)$ such that for all $u \in V(G_u^i)$ and $u_1$,$u_2$ its copies in $X_1$ and $X_2$, $u_1u_2$ is a copy of $ab$.
%\end{definition}

Note that if $G = A \ssquare B$, then it is not always true that $(G,\sigma)$ is the Cartesian product of two signed graphs. For example, $UC_4$ is $s$-prime but $C_4$ is not a prime graph as $C_4 = K_2 \ssquare K_2$.
The following lemma tells us in which cases $(G,\sigma) \equiv (A,\pi_A) \ssquare (B,\pi_B)$, and will be a useful tool for decomposing signed graphs. 
\begin{lemma}
\label{lem:decomp}
If $(G,\sigma)$, $(A,\pi_A)$ and $(B,\pi_B)$ are three connected signed graphs with $G = A\ssquare B$, then  $(G,\sigma) \equiv (A,\pi_A) \ssquare (B,\pi_B)$ if and only if:
\begin{enumerate}
\item all $A$-layers are equivalent to $(A,\pi_A)$,
\item at least one $B$-layer is equivalent to $(B,\pi_B)$, and
\item for each edge $e$ of $A$ and each pair of distinct copies $e_1$,$e_2$ of $e$, if $e_1$ and $e_2$ belong to the same $4$-cycle, then this cycle is a $BC_4$.
\end{enumerate}
\end{lemma}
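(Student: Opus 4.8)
The plan is to fix the product signature $\sigma^{\ast}$ with $(G,\sigma^{\ast}) = (A,\pi_A)\ssquare(B,\pi_B)$ and to prove both implications by comparing $\sigma$ with $\sigma^{\ast}$ through their balanced cycles. By the equivalence characterization of Zaslavsky recalled after Observation~\ref{lemma:balancing-and-homorphism}, two signatures on $G$ are equivalent if and only if they have the same set of balanced cycles, so the whole statement reduces to showing that conditions (1)--(3) hold if and only if $\sigma$ and $\sigma^{\ast}$ assign the same balance to every cycle of $G$. Throughout I would use that the sign map $D \mapsto \prod_{e \in D}\sigma(e)$ is a homomorphism from the cycle space $(\mathcal Z(G),\triangle)$ to $(\{+1,-1\},\cdot)$ (common edges cancel under symmetric difference), and likewise for $\sigma^{\ast}$; hence $\sigma$ and $\sigma^{\ast}$ have the same balanced cycles as soon as they agree on a generating set of $\mathcal Z(G)$. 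I would also record the one computation driving everything: in $(G,\sigma^{\ast})$ the square $(a,b)(a',b)(a',b')(a,b')$ attached to $aa'\in E(A)$ and $bb'\in E(B)$ carries signs $\pi_A(aa'),\pi_B(bb'),\pi_A(aa'),\pi_B(bb')$, so it is a $BC_4$.

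For necessity, assume $(G,\sigma)\equiv(G,\sigma^{\ast})$. Each $A$-layer of $(G,\sigma^{\ast})$ is by definition exactly $(A,\pi_A)$, and switching the whole graph amounts, on a single layer, to a switch within that layer; hence every $A$-layer of $(G,\sigma)$ is equivalent to $(A,\pi_A)$, giving (1), and symmetrically every $B$-layer is equivalent to $(B,\pi_B)$, giving (2) a fortiori. Condition (3) follows from the square computation above together with the fact, from Observation~\ref{lemma:balancing-and-homorphism}, that switching preserves the balance of every cycle: the squares are $BC_4$ in $\sigma^{\ast}$, hence $BC_4$ in $\sigma$.

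For sufficiency, assume (1)--(3) and pick $a_0\in V(A)$ with $B^{a_0}\equiv(B,\pi_B)$, which exists by (2). First I would produce a convenient generating set of $\mathcal Z(G)$: build a spanning tree $T$ of $G=A\ssquare B$ from a copy of a fixed spanning tree of $B$ placed in the layer $B^{a_0}$, together with a copy of a fixed spanning tree of $A$ placed in every $B$-level. Its fundamental cycles are of three kinds: those of non-tree $A$-edges, lying inside a single $A$-layer (in every level); those of non-tree $B$-edges in the level $a_0$, lying inside $B^{a_0}$; and those of non-tree $B$-edges $(a,b)(a,b')$ with $a\neq a_0$, which I would rewrite as the symmetric difference of the corresponding $B^{a_0}$-cycle with the row of squares attached to $bb'$ along the $A$-tree path from $a$ to $a_0$. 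Consequently $\mathcal Z(G)$ is generated by the $A$-layer cycles, the $B^{a_0}$-cycles and the squares. Now $\sigma$ and $\sigma^{\ast}$ agree on each generator: on $A$-layer cycles because by (1) every $A$-layer is equivalent to $(A,\pi_A)$, the exact $\sigma^{\ast}$-layer, and equivalent signatures share the same balanced cycles; on $B^{a_0}$-cycles because $B^{a_0}\equiv(B,\pi_B)$ by the choice of $a_0$; and on squares because they are $BC_4$ in both (condition (3) for $\sigma$, the computation for $\sigma^{\ast}$). Agreement on a generating set yields equality of balance on all of $\mathcal Z(G)$, hence $\sigma\equiv\sigma^{\ast}$, i.e. $(G,\sigma)\equiv(A,\pi_A)\ssquare(B,\pi_B)$.

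The main obstacle is the generating-set claim, and specifically the decomposition of the mixed fundamental cycles into a single-layer cycle plus squares: this is a short but careful symmetric-difference computation in the edge space, where one checks that all intermediate $A$-type edges along the tree path cancel and only the two extreme copies of $bb'$ and the two copies of the tree path survive. Once this bookkeeping is done, the three structural conditions match the three families of generators one-to-one and the rest is the homomorphism property of the balance map. Alternatively one may simply invoke the standard fact that the cycle space of a Cartesian product of connected graphs is generated by its layer cycles and its squares, which would shorten this step to a citation.
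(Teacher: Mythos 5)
Your proof is correct, but it takes a genuinely different route from the paper's. The paper proves sufficiency constructively: using condition (1) it switches every $A$-layer so that its signature becomes exactly $\pi_A$, then propagates the sign of the copies of a fixed $B$-edge along a shortest path inside an $A$-layer, using condition (3) to force each intermediate square $u_1u_2v_2v_1$ to be a $BC_4$ and hence the two copies $u_1v_1$, $u_2v_2$ to agree in sign; all copies of each $B$-edge then share a common sign, yielding $(G,\sigma)\equiv(A,\pi_A)\ssquare(B,\pi_B')$, and condition (2) gives $\pi_B'\equiv\pi_B$. You instead work in the cycle space: equivalence reduces, via Zaslavsky's characterization (which the paper records after Observation~\ref{lemma:balancing-and-homorphism}), to equality of balance on all cycles; the balance map is a homomorphism on $(\mathcal{Z}(G),\triangle)$; and you exhibit a generating set --- $A$-layer cycles, $B^{a_0}$-cycles, product squares --- matched one-to-one with conditions (1)--(3). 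The two arguments in fact hinge on the same telescoping: your ladder-of-squares cancellation along the $A$-tree path is precisely the paper's sign propagation along a shortest path, recast in the edge space. What your version buys is modularity and generality (it extends essentially verbatim to products with several factors, and the generating-set fact for Cartesian products can be cited from the literature); what the paper's version buys is an explicit switched signature in which all layers carry the factor signatures, which the paper actually needs later --- the decomposition algorithm of Theorem~\ref{thm:algo-decomp} outputs precisely such a signature. One small bookkeeping point in yours: at levels $a\neq a_0$, the copies of edges $bb'\in T_B$ are also non-tree edges of your spanning tree, yet your enumeration of fundamental cycles only mentions non-tree $B$-edges; for these the ``corresponding $B^{a_0}$-cycle'' is empty and the fundamental cycle is just the telescoped ladder of squares, so the generating-set claim survives, but the case should be stated explicitly.
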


Note that, in the previous lemma, all $B$-layers are equivalent to $(B,\pi_B)$ but we only need to verify that for one of them to conclude. 

\begin{proof}

($\Rightarrow$) This follows from the definition of the Cartesian product.

($\Leftarrow$) We will do the following independent switches: switch all $A$-layers to have the same signature $\pi_A$.

 Now we claim that all $B$-layers have the same signature $\pi_B'$ equivalent to $\pi_B$. Indeed take one edge $xy$ of $B$ and two copies of this edge $x_1y_1$ and $x_2y_2$ in $G$. Take a shortest path $P$ from $x_1$ to $x_2$ in the $A^{x_1}$-layer. Now if $u_1$,$u_2$ are two consecutive vertices along $P$ and $v_1$ and $v_2$ are their projections on $A^{y_1}$, then $u_1u_2v_2v_1$ is a $BC_4$ by the third hypothesis as $u_1v_1$ and $u_2v_2$ are copies of the edge $xy$.
 
  As $u_1u_2$ and $v_1v_2$ have the same sign by the previous switches, it must be that $u_1v_1$ and $u_2v_2$ have the same sign. Thus all copies of an edge of $B$ have the same sign. 
  
  Hence, $(G,\sigma) \equiv (A,\pi_A)\ssquare (B,\pi_B') \equiv (A,\pi_A)\ssquare (B,\pi_B)$ by Theorem~\ref{thm:cartesianHomCompatibility}.
\end{proof}

One of our main results is the following Prime Decomposition Theorem.

\begin{theorem}[Prime Decomposition Theorem]
\label{thm:primefactortheorem}
If $(G,\sigma)$ is a connected signed graph and $D$ is the prime decomposition of $G$, then $(G,\sigma)$ admits a unique (up to isomorphism and order of the factors) prime $s$-decomposition $D_s$. Moreover, if we see $D_s$ as a decomposition of $G$, then $D$ is finer than $D_s$.
\end{theorem}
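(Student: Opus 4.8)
The plan is to reduce everything to the unique prime decomposition $D=\{G_1,\dots,G_k\}$ of the underlying graph $G$ (Theorem~\ref{thm:ordinari-prime-decompostion}), together with a fixed coordinate system and its canonical layers, and to record how the signs force prime factors to be glued together. I would define a relation on $\setk{k}$ by declaring $i \sim_0 j$ (for $i\neq j$) whenever some $4$-cycle of $(G,\sigma)$ consisting of two copies of a $G_i$-edge and two copies of a $G_j$-edge is a $UC_4$; let $\approx$ be the reflexive--transitive closure of $\sim_0$, let $C_1,\dots,C_r$ be its classes, and put $H_t=\square_{i\in C_t}G_i$. The content of Lemma~\ref{lem:decomp}(3) is precisely that a subproduct can be split off along a partition of the indices exactly when no unbalanced square crosses that partition, so the $C_t$ are the inseparable blocks of indices.

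For existence I would peel off one class at a time. To split $(G,\sigma)$ as $(H_t,\tau_t)\ssquare R_t$ with $R_t=\square_{i\notin C_t}G_i$, I verify the hypotheses of Lemma~\ref{lem:decomp}. Condition~(3) is the crux and holds because $C_t$ is $\sim_0$-saturated: an unbalanced square between an $H_t$-edge and an $R_t$-edge would link $C_t$ to another class. Condition~(1) then follows by transporting balance across adjacent $H_t$-layers: when two layers are joined by a matching of copies of a single outer edge, the product of the balances of the elementary squares along any cycle $Z$ of one layer equals $\mathrm{bal}(Z)\cdot\mathrm{bal}(Z')$ (each matching edge occurs twice), so~(3) forces the two layers to have the same balanced cycles, hence to be equivalent by Observation~\ref{lemma:balancing-and-homorphism} and Zaslavsky's characterisation; connectivity of $R_t$ propagates this to all layers. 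Condition~(2) is immediate by defining $\tau_t$ from one layer. Iterating (noting that switching preserves square-balance, so the $\sim_0$ relation on the remaining indices is the same when recomputed on $R_t$) yields $(G,\sigma)\equiv (H_1,\tau_1)\ssquare\cdots\ssquare(H_r,\tau_r)$, which already gives that $D$ is finer than $D_s$. Each $(H_t,\tau_t)$ is $s$-prime: a nontrivial signed splitting would, by uniqueness of the prime decomposition of $H_t$, correspond to a partition of $C_t$ across which no square is unbalanced (Lemma~\ref{lem:decomp}(3)), contradicting that $C_t$ is a single $\approx$-class.

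For uniqueness, let $\{(J_1,\rho_1),\dots,(J_s,\rho_s)\}$ be any prime $s$-decomposition. Passing to underlying graphs gives $G=J_1\ssquare\cdots\ssquare J_s$, which by Theorem~\ref{thm:ordinari-prime-decompostion} is refined by $D$; hence there is a partition $\setk{k}=P_1\sqcup\cdots\sqcup P_s$ with $J_l\cong\square_{i\in P_l}G_i$. Grouping all factors but the $l$-th (using commutativity and associativity of the product together with Theorem~\ref{thm:cartesianHomCompatibility}) and applying Lemma~\ref{lem:decomp}(3) shows each $P_l$ is $\sim_0$-saturated, while $s$-primality of $(J_l,\rho_l)$ together with Lemma~\ref{lem:decomp} shows each $P_l$ is $\sim_0$-connected; thus $\{P_l\}=\{C_t\}$ and, after reindexing, $J_l\cong H_l$. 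It then remains to match signatures: by Lemma~\ref{lem:decomp}(1) both $\tau_l$ and $\rho_l$ are equivalent to the signature induced by $(G,\sigma)$ on a single $H_l$-layer, and this layer is the same canonical subgraph in both decompositions (it depends only on $G$ and on the index set $C_l$, through the canonical edge-colouring of the unique prime factorisation), so $(J_l,\rho_l)\equiv(H_l,\tau_l)$.

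I expect the main obstacle to be the uniqueness half, and within it the signature-matching step. Establishing $\{P_l\}=\{C_t\}$ is bookkeeping once both directions of Lemma~\ref{lem:decomp} are available, but concluding $(J_l,\rho_l)\equiv(H_l,\tau_l)$ rests on the $H_l$-layer being a well-defined subgraph of $G$ that is independent of the chosen decomposition, i.e.\ on the canonicity of the layers of the unique prime factorisation. I would therefore isolate and justify this canonicity (a standard fact for Cartesian products) explicitly, since the whole uniqueness conclusion depends on it.
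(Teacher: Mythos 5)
Your proof is correct, but it takes a genuinely different route from the paper's. The paper proves uniqueness by a minimal-counterexample induction: Lemma~\ref{lem:common-factor} extracts from two prime $s$-decompositions a common $s$-prime factor $(X,\pi_X)$, obtained as the greatest common divisor of the two underlying factors through a fixed edge, and Lemma~\ref{lem:decomp} is then invoked to show that $(G,\sigma)$ splits off $(X,\pi_X)$; induction on the order of $(G,\sigma)$ forces the two decompositions to coincide. You instead give a canonical-form argument: starting from the Sabidussi--Vizing factorization $D=\{G_1,\dots,G_k\}$ of $G$, you merge indices along the relation ``some product square between a $G_i$-copy and a $G_j$-copy is a $UC_4$'', and you prove that the merge classes $C_t$ are exactly the $s$-prime factors, deriving condition~(1) of Lemma~\ref{lem:decomp} from condition~(3) via the telescoping identity for the product of the elementary squares along a cycle, and reusing the saturation/connectedness dichotomy for both existence and uniqueness (your phrase ``defining $\tau_t$ from one layer'' for condition~(2) should of course read: defining the signature of the complementary factor $R_t$ from one $R_t$-layer, a harmless slip). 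Notably, your construction is in essence the combinatorial core of the paper's own Algorithm~\ref{algo} (Theorem~\ref{thm:algo-decomp}), which merges temporary factor colors precisely when an unbalanced square is detected; so your argument yields an explicit description of $D_s$ and simultaneously certifies the correctness skeleton of the linear-time decomposition, which the paper's inductive proof does not provide. What the paper's route buys is brevity, given Lemma~\ref{lem:decomp} and the gcd machinery. As for the one point you flag: you are right that your uniqueness step needs the strengthened form of Theorem~\ref{thm:ordinari-prime-decompostion} --- that the layer structure (the product edge-colouring) of a connected graph is decomposition-independent, not merely the multiset of factors up to isomorphism --- and this is a standard fact (it appears in Imrich and Klav\v{z}ar's book); note that the paper's Lemma~\ref{lem:common-factor} quietly relies on the same canonicity when it asserts that the prime factor $Z$ through the fixed edge $e$ divides both $A$ and $B$ and when it works with the $X$-layers of the greatest common divisor, so isolating and justifying it, as you propose, puts your proof on the same footing as (indeed slightly more explicit than) the paper's.
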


For proving this theorem, we need the following lemma.

\begin{lemma}
\label{lem:common-factor}
If $(G,\sigma)$ is a connected signed graph that admits two prime $s$-decompositions $D_1$ and $D_2$, then there are two signed graphs $(X,\pi_X)$ and $(Y,\pi_Y)$ such that $(G,\sigma) \equiv (X,\pi_X)\ssquare (Y,\pi_Y)$ with $D_1 = \set{(X,\pi_X)} \cup D_1'$ and $D_2 = \set{(X,\pi_X)} \cup D'_2$, where $D'_1$ and $D'_2$ are two s-decompositions of $(Y,\pi_Y)$.
%belonging simultaneously to a decomposition of one factor of $D_1$ and one factor of $D_2$. \note{dire que c'est le même X}
\end{lemma}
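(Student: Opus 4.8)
The plan is to reduce the statement to a combinatorial claim about partitions of the prime factors of the underlying graph $G$, and then to control those partitions through a ``linking'' relation recording where the signature creates unbalanced squares. First I would fix coordinates: by Theorem~\ref{thm:ordinari-prime-decompostion} the underlying graph has a unique prime factorization $G = G_1 \ssquare \cdots \ssquare G_r$, and I fix a coordinate system for it. Since the underlying graph of a signed Cartesian product is the product of the underlying graphs, the underlying decompositions of $D_1$ and $D_2$ are both coarsenings of $\set{G_1,\dots,G_r}$ (again by uniqueness). Thus $D_1$ and $D_2$ correspond to partitions $\mathcal{P}_1$ and $\mathcal{P}_2$ of $\setk{r}$, each block $B$ giving the factor whose underlying graph is the product $G_B$ of the $G_i$ with $i\in B$. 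By Lemma~\ref{lem:decomp} the signature carried by such a factor is determined up to equivalence by $B$ and $(G,\sigma)$, being the common signature of the corresponding layers. Hence it suffices to exhibit a block belonging to both $\mathcal{P}_1$ and $\mathcal{P}_2$: such a block yields a common factor $(X,\pi_X)$, and then $D_1 \setminus \set{(X,\pi_X)}$ and $D_2 \setminus \set{(X,\pi_X)}$ are both $s$-decompositions of the complementary factor $(Y,\pi_Y)$ on $\setk{r}\setminus B$, which is itself well defined up to equivalence.

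Next I would introduce the linking relation. Call $i,j\in\setk{r}$, $i \neq j$, \emph{linked} if some copy of an edge of $G_i$ and some copy of an edge of $G_j$ lie on a common unbalanced $4$-cycle of $(G,\sigma)$; by Observation~\ref{lemma:balancing-and-homorphism} this does not depend on the chosen representative signature, so it is an invariant of $(G,\sigma)$ up to equivalence. Let $\Lambda$ be the resulting graph on $\setk{r}$ and $\mathcal{C}$ its partition into connected components. The first claim is that in \emph{any} $s$-decomposition linked coordinates share a block: if $i,j$ lay in distinct blocks, the two copies of the $G_i$-edge on their unbalanced square would be copies of an edge of a single factor lying on a common $UC_4$, violating condition~(3) of Lemma~\ref{lem:decomp}. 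Consequently both $\mathcal{P}_1$ and $\mathcal{P}_2$ are coarsenings of $\mathcal{C}$.

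The heart of the argument is the converse forcing: each block of a \emph{prime} $s$-decomposition is a single component of $\Lambda$. Equivalently, if a block $B$ admits a splitting $B = B' \cup B''$ (disjoint, both nonempty) with no link across it, then its factor decomposes, contradicting $s$-primeness. Condition~(3) of Lemma~\ref{lem:decomp} holds for this split precisely because there are no cross-links, so the work is to verify condition~(1), that all $G_{B'}$-layers are equivalent. For this I would move between two such layers that are adjacent along a coordinate $j\in B''$: every square joining them is an $i$--$j$ square with $i\in B'$, hence balanced, and a short computation shows that one layer is obtained from the other by switching the set of vertices whose connecting edge is negative, so the two layers are equivalent. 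Propagating this across the connected layer-adjacency graph gives condition~(1). Combining the two claims, each block of $\mathcal{P}_1$ (and of $\mathcal{P}_2$) is a union of components of $\Lambda$ that is moreover $\Lambda$-connected, hence exactly one component; therefore $\mathcal{P}_1 = \mathcal{C} = \mathcal{P}_2$, and every block is common, which is far more than the lemma requires.

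I expect the main obstacle to be this final verification of condition~(1): making the switching computation between adjacent layers both clean and manifestly switch-invariant, and checking that it still propagates when the part $B'$ of the split spans several prime coordinates rather than a single one (so that the edges of $G_{B'}$ are copies of edges of various $G_i$). Everything else is bookkeeping built on Theorem~\ref{thm:ordinari-prime-decompostion} and Lemma~\ref{lem:decomp}.
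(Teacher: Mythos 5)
Your proof is correct, but it takes a genuinely different route from the paper's. The paper argues locally: it fixes one edge $e$, lets $(A,\pi_A)$ and $(B,\pi_B)$ be the factors of $D_1$ and $D_2$ whose layers contain $e$, takes $X$ to be the greatest common divisor of $A$ and $B$, and applies Lemma~\ref{lem:decomp} twice --- first to get $(G,\sigma) \equiv (X,\pi_X)\ssquare(Y,\pi_Y)$ (two adjacent $X$-layers lie either in distinct $A$-layers or in distinct $B$-layers, so they are equivalent, and the relevant $4$-cycles are balanced for the same reason), then to get $(A,\pi_A) \equiv (X,\pi_X)\ssquare(W,\pi_W)$, so that $s$-primeness of $(A,\pi_A)$ forces $(X,\pi_X)\equiv(A,\pi_A)\equiv(B,\pi_B)$. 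This produces exactly one common factor, and full uniqueness is then recovered in Theorem~\ref{thm:primefactortheorem} by a minimal-counterexample induction. You instead prove a global structure theorem: with coordinates fixed via Theorem~\ref{thm:ordinari-prime-decompostion}, the blocks of \emph{every} prime $s$-decomposition are exactly the connected components of your linking graph $\Lambda$, whence $\mathcal{P}_1 = \mathcal{P}_2$ and in fact $D_1 = D_2$ outright --- strictly more than the lemma asserts, and enough to make the induction in Theorem~\ref{thm:primefactortheorem} superfluous. Both of your claims are sound: the ``linked coordinates share a block'' direction is the ($\Rightarrow$) half of Lemma~\ref{lem:decomp} (condition~(3)) combined with switch-invariance of balance via Observation~\ref{lemma:balancing-and-homorphism}, and your switching computation establishing condition~(1) across a split $B = B' \cup B''$ is essentially the computation inside the paper's ($\Leftarrow$) proof of Lemma~\ref{lem:decomp} run in reverse, propagated by connectivity of $G_{B''}$; your worry about $B'$ spanning several prime coordinates dissolves because every edge of $G_{B'}$ is a copy of a $G_i$-edge for a single $i\in B'$, so every square joining adjacent $G_{B'}$-layers is an $i$--$j$ square with no cross-link. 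What each approach buys: the paper's argument is shorter and needs only the one gcd step, deferring global uniqueness to an easy induction; yours yields a canonical, equivalence-invariant description of the prime $s$-decomposition (blocks equal components of $\Lambda$), a stronger and reusable statement. One point you should make explicit: your identification of $s$-decompositions with partitions of $\setk{r}$ uses the strong form of Sabidussi--Vizing --- that every product decomposition of a connected graph corresponds to a grouping of the prime coordinates with aligned layers, not merely uniqueness of the factors up to isomorphism --- though the paper's own proof (the gcd step, and the claim that $D$ is finer than $D_s$) leans on exactly the same fact, so this is a presentational remark rather than a gap.
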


\begin{proof}
Suppose there exists a signed graph $(G,\sigma)$ that admits two $s$-decompositions $D_1$ and $D_2$.
% such there is no signed graph $X$ belonging simultaneously to a decomposition of one factor of $D_1$ and one factor of $D_2$.
%For each decomposition, we can check to which graph of the decomposition each edge belongs to. 
Fix an edge $e$ of $(G,\sigma)$ which belongs to some $Z$-layer $Z^e$ of the prime decomposition of $G$. The edge $e$ belongs to some $(A,\pi_A)$-layer in $D_1$ and to some $(B,\pi_B)$-layer in $D_2$. The graph $Z$ is a factor of $A$ and $B$ by unicity of the prime factor decomposition of $G$. Let $X$ be the greatest common divisor of $A$ and $B$. Since $e \in E(Z^e)$, $e$ is in some $X$-layer $X^e$. Now $G = X\ssquare Y$ for some graph $Y$. Let us show that $(G,\sigma) \equiv (X,\pi_X)\ssquare (Y,\pi_Y)$ for some signatures $\pi_X$ and $\pi_Y$ of $X$ and $Y$, respectively. We can suppose that $Y \neq K_1$ and $A \neq B$, as otherwise the result is immediate.

First we want to show that all $X$-layers have equivalent signatures. Take two adjacent $X$-layers. If they are in different $A$-layers, then they are equivalent since they represent the same part of $(A,\pi_A)$.
If they are in the same $A$-layer, then they are in different $B$-layers since $X$ is the greatest common divisor of $A$ and $B$. The same argument works in this case. Thus two adjacent $X$-layers are isomorphic  to the same signed graph $(X,\pi_X)$, and since there is only one connected component in $Y$, all $X$-layers have equivalent signatures. %Now we can suppose that we switched them so that they have the same signature. 

Let $\pi_Y$ be the signature of one $Y$-layer. Fix $e'$ an edge of $X$, and $X_1$, $X_2$ two 
%adjacent 
 $X$-layers. Now consider the $4$-cycle (if it exists) containing the copies of this edge in each of the two layers. If $X_1$ and $X_2$ are in different $A$-layers, then this cycle is a $BC_4$ by Lemma~\ref{lem:decomp}, otherwise this cycle is a $BC_4$ as $X_1$ and $X_2$ are in different $B$-layers by the same argument.

By Lemma~\ref{lem:decomp}, we can conclude that $(G,\sigma) \equiv (X,\pi_X)\ssquare (Y,\pi_Y)$. 

Now suppose that $A = X \ssquare W$. Using Lemma~\ref{lem:decomp}, we can show that $(A,\pi_X) \equiv (X,\pi_X) \ssquare (W,\pi_W)$. Indeed all $X$-layers have equivalent signatures since $(G,\sigma) \equiv (X,\pi_X)\ssquare (Y,\pi_Y)$ and all $4$-cycles between two copies of an edge of $X$ are $BC_4$ by the same argument. As $(A,\pi_A)$ is $s$-prime, this implies $(X,\pi_X) \equiv (A,\pi_A)$.
 Thus $(X,\pi_X) \equiv (A,\pi_A) \equiv (B,\pi_B)$ and this proves the lemma.
\end{proof}

\begin{proof}[Proof of Theorem~\ref{thm:primefactortheorem}]
Any signed graph $\G$ has a prime $s$-decomposition by taking an $s$-decomposition that cannot be refined.
Every prime $s$-decomposition of $(G,\sigma)$  can be considered as a decomposition of $G$, and the prime decomposition of $G$ is finer than every such decomposition.
We still have to show that the prime $s$-decomposition of $(G,\sigma)$ is unique. Suppose, to the contrary, that $(G,\sigma)$ is a minimal counterexample to the unicity. Thus $(G,\sigma)$ has two prime $s$-decompositions $D_1$ and $D_2$ and, 
by Lemma~\ref{lem:common-factor}, $(G,\sigma) \equiv (X,\pi_X)\ssquare (Y,\pi_Y)$ with $D_1 = \set{(X,\pi_X)} \cup D_1'$ and $D_2 = \set{(X,\pi_X)} \cup D'_2$, where $D'_1$ and $D'_2$ are two s-decompositions of $(Y,\pi_Y)$. By minimality of $(G,\sigma)$,  $(Y,\pi_Y) $ has a unique prime s-decomposition, hence $D'_1 = D'_2$. Thus $D_1 = D_2$, a contradiction.
\end{proof}

Note that Theorem \ref{thm:primefactortheorem} implies the following result.
%The last property we want for our product is the cancellation property:

\begin{theorem}
\label{thm:cancellation-property}
If $(A,\pi_A)$, $(B,\pi_B)$ and $(C,\pi_C)$ are three signed graphs verifying $ (A,\pi_A) \ssquare (B,\pi_B) \equiv (A,\pi_A) \ssquare (C,\pi_C) $, then $(B,\pi_B) \equiv (C,\pi_C)$.
\end{theorem}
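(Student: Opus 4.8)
The plan is to derive the cancellation property as a direct corollary of the Prime Decomposition Theorem (Theorem~\ref{thm:primefactortheorem}), exactly as cancellation for ordinary graphs follows from unicity of prime factorization. Suppose $(A,\pi_A) \ssquare (B,\pi_B) \equiv (A,\pi_A) \ssquare (C,\pi_C)$, and call this common signed graph $(G,\sigma)$. First I would reduce to the connected case: the Cartesian product of signed graphs is connected if and only if both factors are connected (this follows from the analogous fact for underlying graphs together with the definition of the product), so I would want to handle components carefully. The cleanest route is to assume $(A,\pi_A)$, $(B,\pi_B)$, $(C,\pi_C)$ are all connected, prove cancellation there, and then address disconnected factors separately if needed.

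Under the connectedness assumption, $(G,\sigma)$ is connected and Theorem~\ref{thm:primefactortheorem} applies: $(G,\sigma)$ has a unique prime $s$-decomposition up to isomorphism and reordering. The key step is to take a prime $s$-decomposition $D_A$ of $(A,\pi_A)$, a prime $s$-decomposition $D_B$ of $(B,\pi_B)$, and a prime $s$-decomposition $D_C$ of $(C,\pi_C)$. Then $D_A \cup D_B$ is a prime $s$-decomposition of $(G,\sigma)$, and so is $D_A \cup D_C$, since the Cartesian product is associative and commutative and refining each factor into primes yields a prime $s$-decomposition of the whole. By the unicity part of Theorem~\ref{thm:primefactortheorem}, these two prime $s$-decompositions are equal as multisets: $D_A \cup D_B = D_A \cup D_C$. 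Multiset cancellation then gives $D_B = D_C$, and reassembling the common factors yields $(B,\pi_B) \equiv \prod D_B \equiv \prod D_C \equiv (C,\pi_C)$, as desired.

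The main obstacle is the reduction to connected factors and the treatment of the disconnected case, since Theorem~\ref{thm:primefactortheorem} and the uniqueness machinery are stated only for connected signed graphs. I would need to argue that if $(A,\pi_A)$ is disconnected, the product distributes over components in a controlled way, or alternatively restrict the statement's intended scope to connected graphs in line with the rest of the section. A secondary technical point is verifying that the union of prime $s$-decompositions of the factors really is a prime $s$-decomposition of $(G,\sigma)$: this requires that $s$-primality of the factors is preserved and that equivalence is compatible with taking products, which is exactly the content of the Corollary to Theorem~\ref{thm:cartesianHomCompatibility}. With those compatibility facts in hand, the multiset-cancellation argument is routine.
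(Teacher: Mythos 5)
Your argument for connected factors is correct and coincides with the connected half of what the paper invokes: Theorem~\ref{thm:primefactortheorem} makes the connected signed graphs with at least one edge (up to equivalence) a free commutative monoid under $\ssquare$, and the multiset cancellation $D_A \cup D_B = D_A \cup D_C \Rightarrow D_B = D_C$ is then routine (modulo the trivial case where a factor is $K_1$, which your $s$-decompositions, whose factors must contain an edge, do not cover, but which is immediate by counting vertices). The genuine gap is the disconnected case, which you flag but leave unresolved --- and it cannot be resolved along the lines you sketch. The theorem is stated for arbitrary signed graphs, and for disconnected graphs prime factorization with respect to $\ssquare$ is simply not unique, already for ordinary graphs: translating the polynomial identity $(1+x+x^2)(1+x^3)=(1+x)(1+x^2+x^4)$ into graphs (with $x = K_2$, sums read as disjoint unions and powers as Cartesian powers) exhibits one disconnected graph, a disjoint union of hypercubes, with two distinct factorizations into factors that are prime in the disconnected setting. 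So ``distributing over components in a controlled way'' cannot feed a unique-factorization/multiset argument; the approach fails structurally, not just technically, once connectivity is dropped.

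The paper closes this gap by a different mechanism: following Imrich and Klav\v{z}ar \cite{Imrich2007b}, who use the technique of Fern\'andez, Leighton and L\'opez-Presa \cite{Fernandez2007}, one considers the commutative semiring of equivalence classes of signed graphs, with disjoint union as addition and $\ssquare$ as multiplication. By Theorem~\ref{thm:primefactortheorem}, the connected signed graphs are exactly the monomials in the $s$-prime ones, so this semiring is a polynomial semiring over $\NN$ in the $s$-primes; it embeds into the corresponding polynomial ring over $\ZZ$, which is an integral domain, and there multiplication by the nonzero element $(A,\pi_A)$ is injective. This gives cancellation for all signed graphs at once, with your connected-case computation appearing as the restriction of the embedding to monomials. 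To complete your proposal you would either have to carry out this semiring argument (which is exactly the second ``tool'' the paper cites alongside Theorem~\ref{thm:primefactortheorem}) or weaken the statement with a connectivity hypothesis, which the paper does not do.
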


The proof of this result is exactly the same as the proof for ordinary graphs presented in \cite{Imrich2007b}. Indeed, we have all the necessary tools used in the proof. The first one is Theorem \ref{thm:primefactortheorem}, the other one is the semi-ring structure of signed graphs (quotiented by the equivalence relation) with the disjoint union and the Cartesian product which follows from the definition. See \cite{Imrich2007b} for more details on the proof.

\subsection{Recognising Cartesian products of signed graphs}
\medskip
%\todo{TODO}
In the last part of this section, we propose an algorithm to decompose connected signed graphs. Decomposing a graph can be interpreted in multiple ways: finding a decomposition, identifying which edge of $G$ belongs to which factor, or even better getting a coordinate system that is compatible with the decomposition.
In \cite{Imrich2007a}, Imrich and Peterin gave an $O(m)$ time and space ($m$ is the number of edges of $G$) algorithm for these three questions for ordinary graphs. 
More recently, in \cite{Imrich2018}, they gave another algorithm in $O(m)$ time and space to decompose directed graphs.

Our goal is to give a similar algorithm for signed graphs based on their algorithm for directed graphs.
%One can find further details in Appendix \ref{appendix:algo}.
%Due to the similarities between the two proofs, we only give the main ideas in this article. 

%For a directed graph $\vec{G}$, their algorithm starts with the prime factor decomposition of $G = G_1 \ssquare \dots \ssquare G_k$ and combines factors when necessary. By considering the edges in a BFS order starting at a vertex $v$, they test whether edges of a $G_i$-layer $G_i^u$ are \emph{consistently directed}, \ie have the same orientation as the $G_i$-layer $G_i^v$. 

\begin{theorem}\label{thm:algo-decomp}
Let $(G,\sigma)$ be a connected signed graph of order $n$ and size $m$. We can find in time $O(m)$ and space $O(m)$ the prime $s$-decomposition of $(G,\sigma)$ and a coordinate system for this decomposition. 
\end{theorem}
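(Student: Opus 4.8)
The plan is to reduce the signed decomposition to the \emph{ordinary} decomposition of $G$ plus a grouping step, and to carry out the grouping by invoking the directed-graph algorithm of Imrich and Peterin. First I would run their $O(m)$ algorithm \cite{Imrich2007a} on the underlying graph $G$ to obtain its prime factors $Z_1, \ldots, Z_\ell$ together with a coordinate system. By Theorem~\ref{thm:primefactortheorem}, the prime decomposition $D$ of $G$ is finer than the prime $s$-decomposition $D_s$ of $(G,\sigma)$; hence every $s$-prime factor is (the signed graph carried by) a Cartesian product of a sub-collection of the $Z_i$. So it only remains to decide how to partition $\set{Z_1, \ldots, Z_\ell}$ into the $s$-prime factors and to equip each resulting factor with its signature.

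Second, I would characterise this partition through Lemma~\ref{lem:decomp}. Two factors $Z_p$ and $Z_q$ (with $p \neq q$) may be placed in different $s$-factors only if every $4$-cycle whose two opposite edges are copies of a $Z_p$-edge and whose other two opposite edges are copies of a $Z_q$-edge is a $BC_4$; this is exactly condition~3 of Lemma~\ref{lem:decomp} specialised to a single factor. Crucially, by Observation~\ref{lemma:balancing-and-homorphism} the balance of any individual $4$-cycle is invariant under switching, so this condition is well defined independently of the chosen signature. Define an auxiliary graph $\mathcal{H}$ on $\set{Z_1, \ldots, Z_\ell}$ joining $Z_p$ to $Z_q$ whenever some such mixed $4$-cycle is a $UC_4$. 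I would then prove that the connected components of $\mathcal{H}$ are exactly the $s$-prime factors: if all mixing $4$-cycles between two components are balanced, then (using connectivity of $G$) all layers of each factor are switch-equivalent, so Lemma~\ref{lem:decomp} lets us split the components apart; conversely, any attempt to split a single component crosses a $UC_4$ and is forbidden by the same lemma, so each component is $s$-prime.

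Third comes the complexity, which is where the directed-graph algorithm enters. Enumerating all mixing $4$-cycles is too expensive, so instead I would encode the signature as an orientation: after switching a spanning tree of $G$ to be all positive in one traversal ($O(m)$ time), I would orient the edges so that a mixing $4$-cycle becomes a ``consistently oriented'' product square exactly when it is a $BC_4$. Running the $O(m)$ directed decomposition of \cite{Imrich2018} on this oriented graph then merges precisely the factors joined in $\mathcal{H}$, and its coordinate system, read back on $G$ and grouped according to the directed factors, yields a coordinate system for the prime $s$-decomposition. Translating the directed factors back into signed factors (choosing one representative layer per factor to fix its signature) finishes the construction within $O(m)$ time and space.

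The main obstacle is the correctness and switch-independence of the sign-to-orientation encoding: I must verify that a mixing square is balanced if and only if the corresponding directed square is of product type, so that the directed prime factorisation coincides exactly with the $\mathcal{H}$-components, and that the encoding can be computed in linear time from the spanning-tree switching without ever listing the squares explicitly. The remaining steps (the reduction to grouping, the $\mathcal{H}$-component characterisation via Lemma~\ref{lem:decomp}, and reading off the coordinate system) are then routine.
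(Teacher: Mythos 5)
Your first two steps are sound, and they capture exactly the correctness content of the paper's argument: the paper likewise starts from the Imrich--Peterin factorization of $G$ \cite{Imrich2007a}, and its correctness proof amounts to your $\mathcal{H}$-component characterisation (merges are justified via Lemma~\ref{lem:decomp} precisely when an unbalanced mixing square is found, and balance of a $4$-cycle is switch-invariant). The genuine gap is the encoding in your third step, and it fails for a structural reason, not a technical one. Balance of a mixing square is a \emph{parity} condition on the two pairs of copies: writing $e_1,e_2$ for the copies of the $Z_p$-edge and $f_1,f_2$ for the copies of the $Z_q$-edge, the square is a $BC_4$ iff either both pairs have equal signs or both pairs have unequal signs. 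By contrast, under any per-edge orientation, being a directed product square is a \emph{conjunction}: $e_1,e_2$ must point the same way and $f_1,f_2$ must point the same way. So any encoding of the kind you describe classifies the sign pattern (unequal, unequal) --- which is balanced --- as an obstruction; since merging in these algorithms is irreversible, the directed run of \cite{Imrich2018} then over-merges. A spanning-tree-positive switching does not prevent this pattern. Concretely, take $(G,\sigma) \equiv UC_4 \ssquare K_2^+$ with vertices $(i,j)$, $i \in \ZZ_4$, $j \in \set{0,1}$, product signature whose only negative edges are $(3,j)(0,j)$ for $j=0,1$, and the spanning tree consisting of the path $(0,0),(1,0),(2,0),(3,0),(3,1),(0,1)$ plus the verticals $(1,0)(1,1)$ and $(2,0)(2,1)$. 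The (canonical) tree-positive switching switches exactly the vertex $(0,1)$, and in the resulting signature the square on $(0,0),(1,0),(1,1),(0,1)$ is balanced but has both pairs of copies with unequal signs: it encodes to a directed $4$-cycle, the directed algorithm merges the horizontal and vertical coordinates, and the output wrongly declares $(G,\sigma)$ to be $s$-prime. The same failure can be arranged with a BFS tree (e.g.\ on $UC_6 \ssquare K_2^+$ with a suitable choice of parents), so no tree-based normalisation rescues the reduction.

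The underlying circularity is this: your encoding is correct exactly when the signature has first been switched so that all copies of each factor edge inside the true $s$-prime factors carry equal signs --- but computing such a signature is essentially the whole problem (the paper notes explicitly that its algorithm produces this signature as a by-product). The paper escapes the circularity by not invoking \cite{Imrich2018} as a black box on an encoded digraph; instead, Algorithm~\ref{algo} transplants its BFS/merge framework and \emph{interleaves} the switching with the merging: each new edge $xy$ is compared with its projection $x'y'$ onto the layer through the root, the endpoint $y$ is switched while its switch-state is still free (the set $S$), and temporary colors are merged only when a sign conflict arises at a vertex already fixed --- Claims~\ref{claim:merging} and~\ref{claim:treated} then guarantee that every merge witnesses a genuine $UC_4$ obstruction, so the final partition is exactly your $\mathcal{H}$-components. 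To repair your write-up you would have to adopt this interleaved switching discipline, at which point you are reproducing the paper's proof rather than reducing to the directed algorithm.
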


\begin{algorithm}[t]
 \SetKwData{Left}{left}\SetKwData{This}{this}\SetKwData{Up}{up}
 \SetKwFunction{Union}{Union}\SetKwFunction{FindCompress}{FindCompress}
 \SetKwInOut{Input}{Input}\SetKwInOut{Output}{Output}
 
 \Input{A signed graph $(G,\sigma)$}
 \Output{the prime factor $s$-decomposition of $(G,\sigma)$}
 Compute the prime factor decomposition $D$ of $G$\;
 Set the temporary decomposition of $(G,\sigma)$ to be $J = D$\;
 $Done \leftarrow \varnothing$\;
 $S \leftarrow \varnothing$\;
 $Treated \leftarrow \varnothing$\;
 \ForAll{vertices $x$ taken according to a BFS ordering}{
 	Add $x$ to $S$\;
 	\ForAll{edges $xy \notin Treated$\label{algo:line-8}}{
 		Determine the temporary color $i$ of $xy$ and $J_i$ the current factor to which it belongs in the current decomposition\;
 		Let $x'y'$ be the projection of $xy$ onto $J_i^v$\;
 		%\uIf{$xy = x'y'$, is a down-edge and has negative sign}{
 		%	Switch at the vertex $y$\;
 		%	Add $y$ to $S$\;
 		%}
 		\uIf{$xy$ and $x'y'$ do not have the same sign and $y \notin S$\label{algo:line-11}}{
 			Switch the vertex $y$\;
 			Add $y$ to $S$\;
 		}
 		\uElseIf{$xy$ and $x'y'$ have the same sign and $y \notin S$\label{algo:line-14}}{
 			Add $y$ to $S$\;
 		}
 		\ElseIf{$xy$ and $x'y'$ do not have the same sign and $y \in S$\label{algo:line-16}}{
 			Merge the temporary colors of all up-edges of $y$ (and the temporary color of $xy$) and update the decomposition\;\label{algo:line-17}
 		}
 		Add $xy$ to $Treated$\;
 	}
 	Add $x$ to $Done$\;
 }
 \caption{A decomposition algorithm for signed graphs.}
 \label{algo}
\end{algorithm}

We take a coordinate system for a graph $G$ corresponding to its prime decomposition $D$ which can be computed in $O(m)$ time  \cite{Imrich2007a}. 
Let $v$ be the vertex of $G$ with coordinates all equal to zero. We order the vertices using a BFS traversal of the graph starting at $v$. If $xy$ is an edge, then it is a down-edge (resp. up-edge, resp. cross-edge) of $x$ when $d(v,x) < d(v,y)$ (resp. $d(v,x) > d(v,y)$, resp. $d(v,x) = d(v,y)$) where $d$ denotes the distance in $G$. 
We proceed as described in Algorithm~\ref{algo}. We color the edges of $G$ using the prime decomposition $D$ of $G$: we associate to each factor $X$ of $D$ a color, which is then assigned to every edge belonging to an $X$-layer of $G$. 
We maintain a temporary decomposition $J$ of $G$ for which  we merge some factors, by means of recoloring the edges, during the algorithm. 
Our goal, at the end of the algorithm, is that $J = P$ where  $P$ is the prime $s$-decomposition of $(G,\sigma)$. We note $p_i(e)$ the projection of an edge $e = xy \in J_i^x$ to the temporary $J_i$-layer $J_i^v$.

First note that in Algorithm~\ref{algo}, the set $Done$ is not used. Therefore, it can be omitted. Its only purpose is to ease the correctness analysis of the algorithm.
Let us make a few more remarks. The set $Done$ (resp. $Treated$) is used to record which vertex (resp. edge) has been processed by the algorithm. The set $S$ corresponds to the set of vertices for which we have decided whether they need to be switched or not.  If $x \in Done$ at some point of the algorithm then all its incident edges belong to the set $Treated$. Moreover, by construction of the BFS ordering, if $xy$ is a down-edge of $x$ in $J_i^x$ then, for every vertex $z$, the projection $x'y'$ of $xy$ on $J_i^z$ is a down-edge of $x'$.

\begin{claim}
\label{claim:merging}
After the merging in line~\ref{algo:line-17} of the algorithm, $v$, $y$ and $x$ belong to the same layer.
\end{claim}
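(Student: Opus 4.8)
The plan is to read the merge of line~\ref{algo:line-17} as the creation of a single coarser factor $J_c$ obtained by grouping the colours of all up-edges of $y$ together with the colour of $xy$, and to interpret ``belong to the same layer'' as ``belong to the same $J_c$-layer''. Recall that the coordinate system is the one fixed for the prime decomposition $D$, that $v$ has all coordinates equal to $0$, and that for $G = G_1 \ssquare \cdots \ssquare G_k$ the distance splits additively as $d(v,w) = \sum_{\ell} d_{G_\ell}(0, w_\ell)$. Under this convention, two vertices lie in the same $J_c$-layer exactly when they agree in every prime-coordinate $\ell$ that is not grouped into $c$.

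The key step I would isolate is the following description of the up-edges of $y$: the set of prime-colours carried by the up-edges of $y$ is precisely $\set{\ell : y_\ell \neq 0}$, the set of nonzero coordinates of $y$. For the inclusion in one direction, an edge of prime-colour $\ell$ leaving $y$ changes only the coordinate $y_\ell$, so if $y_\ell = 0$ it can only increase $d_{G_\ell}(0, \cdot)$ and is a down-edge, never an up-edge. For the reverse inclusion, whenever $y_\ell \neq 0$ I use that $G_\ell$ is connected: a shortest $y_\ell$-to-$0$ path in $G_\ell$ has a first edge $y_\ell w_\ell$ with $d_{G_\ell}(0,w_\ell) = d_{G_\ell}(0,y_\ell)-1$, and the corresponding edge of $G$ strictly decreases $d(v,\cdot)$, hence is an up-edge of prime-colour $\ell$.

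With this in hand the conclusion is immediate. Merging the colours of all up-edges of $y$ forces the super-factor $J_c$ to contain every prime-factor $\ell$ with $y_\ell \neq 0$; since $y$ differs from $v$ only in those coordinates, $v$ and $y$ agree outside $c$ and therefore lie in the same $J_c$-layer. Finally $xy$ has some prime-colour $i$, and $x$ differs from $y$ only in coordinate $i$; merging the colour of $xy$ into $c$ puts $i \in c$, so $x$ agrees with $y$ (hence with $v$) outside $c$ as well, placing all three of $v$, $y$ and $x$ in the same $J_c$-layer.

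The main obstacle is not the final counting but keeping the bookkeeping between the fixed prime coordinates and the current, already-merged colours straight: one must argue that the colours merged by the algorithm, which are current $J$-colours, really group together exactly the prime-factors indexed by the nonzero coordinates of $y$. Once the up-edge description above is established at the level of prime-colours, this reduces to the observation that merging is monotone, so grouping the current colours of the up-edges of $y$ still captures every prime-factor $\ell$ with $y_\ell \neq 0$ regardless of any earlier merges recorded in $J$.
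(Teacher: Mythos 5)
Your proof is correct and takes essentially the same route as the paper: the paper's argument likewise observes that every nonzero coordinate of $y$ is realized by an up-edge of $y$ (via the BFS ordering and the Cartesian product structure, i.e.\ the additivity of distance over factors), so the merge in line~\ref{algo:line-17} groups all factors corresponding to nonzero coordinates of $y$ together with the colour of $xy$, leaving $y$ with at most one nonzero coordinate and hence $v$, $y$ and $x$ in the same layer. Your additional verification that up-edges carry \emph{only} colours of nonzero coordinates, and the explicit bookkeeping between prime colours and current merged colours, are correct but unneeded refinements that the paper leaves implicit.
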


\begin{proof}
We just need to prove that $y$ and $v$ belong to the same layer after merging. Note that a layer $J_i^a$ corresponds to all the vertices $b$ which differ from $a$ only by the $i$th coordinate (in the current decomposition). Note also that the coordinate vector of a neighbor of $y$ and the coordinate vector of $y$ differ by only one coordinate. For any non-zero coordinate of $y$, there is an up-edge $yz$ of $y$ to a neighbor $z$ of $y$ which differs only on this non-zero coordinate (as the ordering is a BFS ordering and by the Cartesian product structure), therefore all factors $J_\ell$ corresponding to non-zero coordinates of $y$ are merged. Hence, in this new coordinate system, $y$ has at most one non-zero coordinate and thus $y$ and $v$ are in the same layer.
\end{proof}

\begin{claim}\label{claim:treated}
Let $ab$ and $a'b'$ be two edges of the set $Treated$ at any moment of the algorithm. If $a'b' \in J_i^{a'}$ for some $i$ and $p_i(a'b') = p_i(ab)$ (\ie  they represent the same edge of $J_i$), then $ab$ and $a'b'$ have the same sign.
\end{claim}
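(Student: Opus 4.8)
The plan is to prove the statement as an \emph{invariant} maintained by Algorithm~\ref{algo}: after every elementary operation, any two edges of $Treated$ that are copies of the same edge of the same current factor $J_i$ carry the same sign, the base case being $Treated = \varnothing$. Only three kinds of operations can affect this property, namely switching a vertex (line~\ref{algo:line-11}), merging factors (line~\ref{algo:line-17}), and inserting the current edge $xy$ into $Treated$. I would first record the auxiliary fact that if $y \notin S$ then no edge incident to $y$ lies in $Treated$: this is immediate from the inner loop, since the first time an edge at $y$ is processed one of the branches at lines~\ref{algo:line-11} and~\ref{algo:line-14} puts $y$ into $S$, whereas the branch at line~\ref{algo:line-16} presupposes $y \in S$.

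Two of the three operations are then easy. Switching a vertex happens only when $y \notin S$, so by the auxiliary fact it alters the sign of no treated edge and leaves the decomposition untouched, hence preserves the invariant. For merging I would observe that when several factors are fused, the projection onto the base layer of the new factor remembers strictly more coordinates than the old projections did, so the relation ``being copies of the same factor edge'' can only become finer: any two treated edges that are copies of one edge of the merged factor were already copies of one edge of some old factor, hence have equal signs by the induction hypothesis, and merging changes no sign.

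The real content is the insertion of $xy$ into $Treated$. When $xy$ is processed at the outer vertex $x$, all up-edges of $x$ are already treated, so $xy$ is a down- or cross-edge of $x$, and by the remark preceding Claim~\ref{claim:merging} its base-layer projection $x'y' = p_i(xy)$ has the same type. If $xy$ is not itself in the base layer, then $x' \neq x$ lies strictly closer to $v$, so $x'y'$ was treated before $x$ was reached. In the branches at lines~\ref{algo:line-11} and~\ref{algo:line-14} (the cases $y \notin S$) the algorithm switches $y$ exactly when needed so that, after the step, $xy$ and $x'y'$ carry the same sign; combining this with the induction hypothesis applied to $x'y'$ and to all previously treated copies of $p_i(xy)$ forces $xy$ to agree with every such copy. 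If instead $xy$ already lies in the base layer, the condition is vacuous, which I would confirm with a depth argument: every other copy of $p_i(xy)$ sits at a strictly deeper non-$i$ position and is therefore processed later.

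The main obstacle, as expected, is the merging branch at line~\ref{algo:line-16}, where $y \in S$ forbids a switch. Here I would invoke Claim~\ref{claim:merging}: after the merge, $v$, $x$ and $y$ lie in one common layer, so $xy$ becomes an edge of the base layer of the new factor and is thus its own projection. The same depth argument then shows that no previously treated edge is a copy of this new factor edge, since every other copy changes the merged coordinate identically but lies at a strictly deeper position in the remaining coordinates and has not yet been processed; hence inserting $xy$ cannot break the invariant. Together with the already-established preservation of the invariant by the merge among the old treated edges, this closes the induction. The delicate points to get right are the precise bookkeeping of which copies are treated before $xy$ and the verification that the projection genuinely refines under merging, both of which rest on the BFS ordering and on Claim~\ref{claim:merging}.
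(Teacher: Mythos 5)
Your proof is correct and follows essentially the same route as the paper's: the paper argues via a first-violating-edge (minimal counterexample) rather than an explicit invariant induction, but the substance is identical — signs of treated edges are frozen because both endpoints lie in $S$, the branches at lines~\ref{algo:line-11} and~\ref{algo:line-14} force agreement with the base-layer projection, and the merge branch is excluded via Claim~\ref{claim:merging} since the edge then projects to itself. If anything, you are slightly more careful than the paper in two spots it leaves implicit: that merging only refines the ``copies of the same factor edge'' relation (so it cannot create violations among already-treated edges), and the BFS-depth argument showing that in the merge branch no treated copy of the new base-layer edge can yet exist.
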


\begin{proof}
By contradiction, suppose that $a'b'$ is the first edge such that, when added to $Treated$, there exists some edge $ab \in Treated$ such that $p_i(a'b') = p_i(ab)$ and $a'b'$ and $ab$ do not have the same sign. Let $a''b''$ be the edge $p_i(a'b')$. Note that no edge in $Treated$ can change sign once it is into the set as both its endpoints are in $S$. By definition of $a'b'$, $ab$ and $a''b''$ have the same sign since they both project to $a''b''$. Hence, it must be that $a'b'$ and $a''b''$ do not have the same sign.

Note that, $a'b'$ cannot be treated in the third if statement at line~\ref{algo:line-16}, as otherwise it would belong to some layer $J_i^v$ after merging by Claim \ref{claim:merging} and thus $a'b'$ would  project to itself.
%Now let $a''b''$ be the projection of both edges on the layer $J_i^v$. By definition of $a'b'$, $a''b''$ and $ab$ have the same sign. 
Since $a'b'$ went through one of the first two if statements (lines~\ref{algo:line-11} and \ref{algo:line-14}), $a'b'$ and $a''b''$ have the same sign, a contradiction.
\end{proof}

\begin{proof}[Proof of Theorem~\ref{thm:algo-decomp}]
\mbox{}\\
\noindent\textbf{Correctness:}
First, let us show that $J$ is finer than $P$, the prime $s$-decomposition of $\G$, at each step of the algorithm. It is true at the beginning of the algorithm by Theorem~\ref{thm:primefactortheorem} as $J = D$. 
Suppose that $J$ is finer than $P$ at the beginning of step~\ref{algo:line-8}. In the if statement, if we enter the first two cases then we do not change $J$. Hence it is still finer than $P$ at the end of the loop.

Suppose $xy$ and $x'y'$ are not of the same sign and $y\in S$ (\ie we enter line~\ref{algo:line-17}). As $y \in S$ and $xy \notin Treated$, there is some neighbor $z$ of $y$ for which $z \in Done$. We consider two cases depending on whether $z \in J_i^x$ or $z \notin J_i^x$.
%, which was added to this set before considering $x$ in the main loop of the algorithm, for which we considered the edge $yz$.

\medskip

Suppose first that $z \in J_i^x$. 

Take a shortest path $P_z$ in $J_i^x$ from $z$ to the projection $p_v$ of $v$ on $J_i^x$. All vertices of the path appear before $z$ in the BFS ordering, thus all the edges of the path belong to the set $Treated$. The same holds for a shortest path $P_x$ from  $p_v$ to $x$. In particular the  walk $W$ obtained by concatenating $yz$, $P_z$ and $P_x$ has all its edges in $Treated$. This implies that $W$ and $W'$, its projection on $J_i^v$, have the same sign by Claim~\ref{claim:treated}. Hence the closed walk $C$ obtained by concatenating $W$ with $xy$ and its projection ($W'$ concatenated with $x'y'$) have different signs and $J_i^v$ and $J_i^x$ do not have the same signature. 

Let $u$ be a neighbor of $y$ such that $uy$ is an up-edge of $y$ and $u \notin J_i^x$. Every edge $e'$ of the projection $C'$ of $C$ on $J_i^u$ is in $Treated$ as  $d(v,e') < d(v,e)$ where $e$ is the counterpart of $e'$ in $C$ (all vertices of $C$ have an up-edge to their projection on $J_i^u$). In particular $C$ and $C'$ do not have the same sign and $J_i^x$ and $J_i^u$ do not have the same signature. 

This implies that both layers are in the same factor of $P$. Indeed suppose that this is not the case. Then all cycles $abb'a'$, such that $ab \in J_i^x$ and $a'b'$ is its projection on $J_i^u$, must be $BC_4$. For all edges $ab$ of $W$, $ab$ and $a'b'$ have the same sign by Claim~\ref{claim:treated}, hence $aa'$ and $bb'$ also have the same sign (since the cycle is balanced). Now  let $x''y''$ be the projection of $xy$ on $J_i^u$.  By going around $W$ and by the previous observation, $xx''$ and $yy''$ have the same sign. Note that $xy$ and $x''y''$ do not have the same sign as $x''y'' \in Treated$ ($x''y''$ has the same sign as $x'y'$). This implies that $xyy''x''$ is a $UC_4$, a contradiction.

Hence we need to merge all temporary colors of all up-edges of $y$ (including color $i$). Thus after this step $J$ is still finer than $P$.

\medskip 
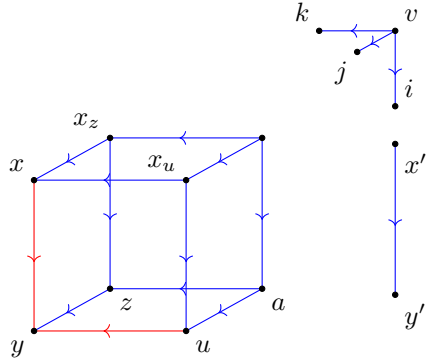
\begin{figure}
    \centering
    \begin{tikzpicture}[scale=1,z={(-.5,-.28)}]

    %\draw[latex-] (2.1,0,0) coordinate (X) node[anchor=west] {$X$} -- ([scale=-1]X);
    %\draw[latex-] (0,2.1,0) coordinate (Y) node[anchor=south] {$Y$} -- ([scale=-1]Y);
    %\draw[latex-] (0,0,3.5) coordinate (Z) node[anchor=east] {$Z$} -- ([scale=-1]Z);

    %\draw[fill=blue]  coordinate (O) circle (1pt) node[anchor=south  east] {$O$};

    \draw (1,-1,1)   coordinate (A);
    \draw (-1,-1,1)  coordinate (B);
    \draw (-1,1,1)   coordinate (C);
    \draw (1,1,1)    coordinate (D);
    \draw (1,1,-1)   coordinate (E);
    \draw (-1,1,-1)  coordinate (F);
    \draw (-1,-1,-1) coordinate (G);
    \draw (1,-1,-1)  coordinate (H);
    
    \draw (2,2,-2.5)  coordinate (V);
    \draw (2,1,-2.5)  coordinate (I);
    \draw (2,2,-1.5)  coordinate (J);
    \draw (1,2,-2.5)  coordinate (K);
    
    \draw (2,0.5,-2.5)  coordinate (X');
    \draw (2,-1.5,-2.5)  coordinate (Y');
    
    \begin{scope}[decoration={
    markings,
    mark=at position 0.5 with {\arrow{<}}}]
    \draw[blue,postaction={decorate}]  (A)--(H);
    \draw[blue,postaction={decorate}]  (B)--(G);
    \draw[blue,postaction={decorate}]  (D)--(E);
    \draw[blue,postaction={decorate}]  (C)--(F);
    \draw[blue,postaction={decorate}]    (F)--(E);
    \draw[blue,postaction={decorate}] (G)--(F);
    \draw[blue,postaction={decorate}] (G)--(H);
    \draw[blue,postaction={decorate}] (H)--(E);
    \draw[blue,postaction={decorate}]  (A)--(D);
    \draw[blue,postaction={decorate}] (C)--(D);
    \draw[red,postaction={decorate}]   (B)--(A);
    \draw[red,postaction={decorate}] (B)--(C);
    \draw[blue,postaction={decorate}] (I)--(V);
    \draw[blue,postaction={decorate}] (J)--(V);
    \draw[blue,postaction={decorate}] (K)--(V);
    \draw[blue,postaction={decorate}] (Y')--(X');
    \end{scope}

    \foreach \l/\a/\t in {A/north west/u,B/north east/y,C/south east/x,D/south east/x_u,E/south west/ ,F/south east/x_z,G/north west/z,H/north west/a,V/south west/v,I/south west/i,J/north east/j,K/south east/k,X'/north west/x',Y'/north west/y'}
      \draw[fill=black] (\l) circle (1pt) node[anchor=\a] {$\t$};
  \end{tikzpicture}
  \caption{The second case of the correctness analysis. For simplicity, all edges which are in $Treated$ are positive. The orientation of the edges represents the BFS order. The neighbors of $v$ are labelled with the temporary color of their edge with $v$.}
  \label{fig:algo-case-2}
\end{figure}

Suppose now that $z \notin J_i^x$ (see Figure~\ref{fig:algo-case-2}). 

In this case, $z$ is the projection of $y$ on $J_i^z$. Let $x_z$ be the projection of $x$ on $J_i^z$. Since $yz$ is an up-egde of $y$, $xx_z$ is an up-edge of $x$ and $x_z \in Done$. Note that $xx_z$ and $yz$ have the same sign since both are in $Treated$. Also note that $x_zz$ and $x'y'$ have the same sign since $x_zz \in Treated$. Hence $xyzx_w$ is a $UC_4$. By the same arguments as before, these four vertices belong to the same signed factor of $(G,\sigma)$, hence we must merge $i$ and, say $j$, the temporary colors  of $xy$ and $yz$ respectively.

%and $yz$ is the projection of the other on the layer $(J^{\perp}_i)^x$. %Moreover $x_zz$ and $x'y'$ have the same sign as $x''z$ is project to $x'y'$ by the projection on $J_i^v$. 
%Moreover,  $x_zz$ has the same sign as $x'y'$ since $x_zz \in Treated$. Thus $xyzx_z$ is a $UC_4$. Such a $UC_4$ must be induced in a factor thus we must merge the layers $J_i^x$ and $J_i^z$. 
Let $u$ be a neighbor of $y$ such that $uy$ is an up-edge of $y$ of temporary color $k\notin \set{i,j}$. Let $x_u$ be the projection of $x$ on $J_i^u$. 
Note that $x_uu$ and $x'y'$ have the same sign as $d(x_u,v) < d(x,v)$  (\ie $x_u \in Done$). 
If $xx_u$ and $yu$ have the same sign, we have a $UC_4$ and must merge the temporary colors $i$ and $k$. 
Suppose they have different signs. Note that $y$ and $z$ (resp. $u$) differ only by their $j$th coordinate (resp. $k$th coordinate).
Let $a$ be the vertex with the same coordinate as $u$ except for its $k$th coordinate which is equal to the $k$th coordinate of $z$ (see Figure~\ref{fig:algo-case-2}). Note that $a$ appears before $z$ and $u$ in the BFS ordering. Since the vertex $a$ is a neighbor of $z$ and $u$, both edges $za$ and $ua$ are down-edges of $a$. 
Hence $za \in Treated$ and $za$ has the same sign as $xx_u$ which is different from the sign of $uy$, and $yz$ and $ua$ also have the same sign since both are in $Treated$. 
In particular $yuaz$ is a $UC_4$ and these four vertices must be in the same factor of $P$. This implies that we must merge the temporary colors $j$ and $k$ which implies merging $i$ and $k$.

%If $xywx_w$ is a $UC_4$, as before we must merge the two layers. If it is a $BC_4$ and the two layers $J_i^x$ and $J_i^w$ are not merged in $P$, then $z$ and $w$ have a common neighbor $a$ which is the projection of $z$ on the factor $P_{\ell}$ of $P$ to which $w$ belongs. We have that $a$ is closer to $v$ than both $w$ and $z$, in particular all of its edges are in $Treated$. Hence, the sign of $aw$ and $yz$ are the same. Thus, $wy$ and $za$ have the same signs since $J_i^x$ and $J_i^w$ are not merged in $P$ and it is different from  the sign $xx_w$ as $xx_wwy$ is a $BC_4$. Finally, $xyzawx_w$ is an unbalanced cycle, a contradiction with the product definition.
%This implies that we must merge the two layers.

%Take a shortest path $P_y$ from $y$ to $y'$ passing through $z$ and let $P_{x}$ be its projection on $(J^{\perp}_i)^x$. As all edges of $P_y$ and $P_{y}$ are in $Treated$ then they have the same sign. In particular the cycle $C$ obtained by concatenating $xy$, $P_x$, $x'y'$ and $P_y$ is unbalanced. Such a cycle would be balanced if these two layers were not in the same signed factor of $P$. Hence, the two layers $J_i^v$ and $J_i^x$ must be in the same signed factor $P_{\ell}$ of $P$. This corresponds exactly to merging the temporary colors of all up-edges of $y$. Thus after this step $J$ is still finer than $P$.
\medskip

At the end $J$ is finer than $P$ and $J$ is an $s$-decomposition by Claim~\ref{claim:treated}. Hence $J = P$.

\medskip

\noindent\textbf{Complexity:} Due to the similarity of our algorithm with the one in \cite{Imrich2018}, most of the complexity arguments given in \cite{Imrich2018} are still valid for our algorithm. The only differences between the two algorithms are the presence of the three sets $Done$, $S$ and $Treated$, two more if blocks and the need to switch at some vertices. Let us address these three points. Each set can be encoded by a boolean in the data structure of vertices/edges. The second for loop  checks each edge $xy$ twice, once for each endpoint, but this still amounts to a $O(m)$ iteration of the loop. The two additional if blocks are a $O(1)$ overhead for each iteration of the loop. The switch operation is another $O(m)$ total overhead as each edge can be switched at most once thanks to the presence of the set $S$. Hence the algorithm runs in time $O(m)$. The reader can find more details in \cite{Imrich2018}, and in particular, how to compute the projections in constant time.
\end{proof}

Note that this algorithm not only computes the prime $s$-decomposition of $(G,\sigma)$ but finds a signature $\sigma' \equiv \sigma$ for which all layers of the Cartesian products have the same signature as their corresponding factors.

\section{Chromatic number of Cartesian products of complete signed graphs and upper bounds}
\label{sec:cartesian:complete}

In this section, we show a simple upper bound on the chromatic number of a Cartesian product of two signed graphs and compute the chromatic number of some special  complete signed graphs. We start by defining a useful tool on signed graphs.

\subsection{$s$-redundant sets}

In what follows we define the notion of an $s$-redundant set in a signed graph. Intuitively, if $S$ is an $s$-redundant set of $(G,\sigma)$ and $x$ and $y$ are two vertices  cannot be mapped to a same vertex by any homomorphism of $\G$, then they cannot be mapped to a same vertex by a homomorphism of $(G,\sigma) - S$.

%Informally, it is a set of vertices such that removing them does not remove any distance two constraint between the pairs of vertices left in the graph.

\begin{definition}
\label{def:redundant}
Let $(G,\sigma)$ be a signed graph and $S \subseteq V(G)$. We say that the set $S$ is {\em $s$-redundant} if and only if, for every $x,y \in V(G) -S$ such that $xy \notin E(G)$, every $z \in S$ and every signature $\sigma'$ with $\sigma' \equiv \sigma$,  if $xzy = UP_3$ in  $(G,\sigma')$ then there exists $w \in V(G) -S$ such that $xwy= UP_3$ in  $(G,\sigma')$. 
%$$ \chi_s(G) \leq \abs{S} + \chi_s(G-S)$$
\end{definition}

The following proposition provides an alternative formulation of the definition which is useful in order to prove that a set is an $s$-redundant set.

\begin{proposition}
\label{prop:redundant-BC4}
If $(G,\sigma)$ is a signed graph and $S \subseteq V(G)$, then $S$ is $s$-redundant if and only if for every $z \in S$, and every $ x,y\in N(z) \setminus S$ with $xy \notin E(G)$, there exists $w \in V(G) \setminus S$ such that $xwyz$ is a $BC_4$.
\end{proposition}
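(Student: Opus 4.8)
The plan is to reduce the whole statement to one elementary sign identity, combined with the switch-invariance of cycle balance from Observation~\ref{lemma:balancing-and-homorphism}. The key remark I would establish first is the following. Fix any signature $\sigma' \equiv \sigma$ and four distinct vertices $x,w,y,z$ such that $xw,wy,yz,zx$ are all edges. Since $\sigma'(xw)\sigma'(wy)\sigma'(yz)\sigma'(zx) = \bigl(\sigma'(xw)\sigma'(wy)\bigr)\cdot\bigl(\sigma'(zx)\sigma'(zy)\bigr)$, the cycle $xwyz$ is a $BC_4$ if and only if the two paths $xwy$ and $xzy$ have the \emph{same} sign in $(G,\sigma')$, i.e.\ $\sigma'(xw)\sigma'(wy) = \sigma'(zx)\sigma'(zy)$. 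Moreover, by Observation~\ref{lemma:balancing-and-homorphism}, whether $xwyz$ is a $BC_4$ does not depend on the chosen equivalent signature. This single equivalence is the engine behind both directions.

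For the implication ``condition~$\Rightarrow$ $s$-redundant'', I would take the data of Definition~\ref{def:redundant}: vertices $x,y \in V(G)-S$ with $xy \notin E(G)$, a vertex $z \in S$, and $\sigma' \equiv \sigma$ with $xzy = UP_3$ in $(G,\sigma')$. The existence of the path $xzy$ forces $x,y \in N(z)\setminus S$, so the hypothesis of the proposition yields $w \in V(G)\setminus S$ with $xwyz$ a $BC_4$. Because the four vertices of a $4$-cycle are distinct, $xwy$ is a genuine path of order $3$, and the sign identity above (with $xzy$ having sign $-1$) forces $xwy$ to have sign $-1$ as well, i.e.\ $xwy = UP_3$ in $(G,\sigma')$. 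This is exactly the conclusion required by the definition.

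For the converse ``$s$-redundant $\Rightarrow$ condition'', I would fix $z \in S$ and $x,y \in N(z)\setminus S$ with $xy \notin E(G)$, and first produce a signature in which the definition can be applied. Here I use that path balance, unlike cycle balance, is \emph{not} switch-invariant: switching at the endpoint $x$ flips exactly $\sigma(xz)$ among the two edges $xz,zy$ of the path, so I can choose $\sigma' \equiv \sigma$ making $xzy = UP_3$. Applying the $s$-redundant property gives $w \in V(G)-S$ with $xwy = UP_3$ in $(G,\sigma')$; the sign identity then makes $xwyz$ balanced, hence a $BC_4$ (switch-invariantly, so also in $(G,\sigma)$).

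The main thing to watch — rather than a genuine obstacle — is the distinctness bookkeeping needed to turn the two $UP_3$'s into an honest $4$-cycle. Concretely, $x\neq y$ and $w\notin\{x,y\}$ come from the paths being of order $3$, while $z\notin\{x,y\}$ and, crucially, $w\neq z$ are automatic because $z\in S$ whereas $x,y,w\in V(G)\setminus S$. Once these are checked, everything reduces to the product-of-signs identity, and the whole proof is short.
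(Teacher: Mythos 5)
Your proof is correct and takes essentially the same route as the paper's: both directions reduce to the identity that a $4$-cycle $xwyz$ is balanced if and only if the two opposite $2$-paths $xwy$ and $xzy$ have the same sign, combined with switch-invariance of cycle balance. You are in fact slightly more careful than the paper's terse argument, which leaves implicit both the existence of a signature $\sigma' \equiv \sigma$ making $xzy$ a $UP_3$ (your switch-at-$x$ step) and the distinctness bookkeeping (notably $w \neq z$ via $w \notin S$, $z \in S$).
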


\begin{proof}
Take $x,y \in V(G) -S$ such that $xy \notin E(G)$ and $ z \in S$. If $xzy = UP_3$ in a signature $\sigma' \equiv \sigma$, then $x,y \in N(z)$. Now if $S$ is an $s$-redundant set, then with the notation of the definition $xzyw$ is a $BC_4$ in $(G,\sigma')$ and thus in $(G,\sigma)$. 
If $xzyw$ is a $BC_4$ and $xzy$ is a $UP_3$ in a given signature $\sigma'$, then $xwy$ is also a $UP_3$ as $xzyw$ is balanced. This proves the equivalence between the two statements.
\end{proof}

The next theorem is the reason why we defined this notion. It allows us to compute an upper bound on the chromatic number of a signed  graph as a function of the chromatic number of one of its subgraphs. One example of utilisation of this notion is given by the proof of Theorem~\ref{th:K+K-}.

\begin{theorem}
\label{thm:redundant}
If $(G,\sigma)$ is a signed graph and $S$ is an $s$-redundant set of $(G,\sigma)$, then
$$ \chi_s(G,\sigma) \leq \abs{S} + \chi_s((G,\sigma)-S).$$
\end{theorem}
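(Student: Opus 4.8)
The plan is to start from an optimal signed coloring of $(G,\sigma)-S$ and extend it to all of $(G,\sigma)$ by handing each vertex of $S$ its own private color, so that the target acquires exactly $\abs{S}$ extra colors; the work is then to check that this extension still satisfies the two defining requirements of a signed $(k+\abs{S})$-coloring. Concretely, set $k = \chi_s((G,\sigma)-S)$. Unfolding the coloring formulation of $\chi_s$, there is a set $X \subseteq V(G)-S$ whose switch turns $\sigma|_{V(G)-S}$ into a signature $\tau$, together with a proper coloring $\theta \colon V(G)-S \to \setk{k}$ that is \emph{sign-consistent} for $\tau$: for every pair $i,j$, all edges joining color class $i$ to color class $j$ carry the same sign in $\tau$. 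Switching the \emph{same} set $X$ in the whole graph yields a signature $\sigma' \equiv \sigma$ whose restriction to $V(G)-S$ is exactly $\tau$ (an edge inside $V(G)-S$ is flipped the same number of times in both pictures). Finally extend $\theta$ to $\theta' \colon V(G) \to \setk{k+\abs{S}}$ by giving each $z \in S$ a distinct fresh color.

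Properness of $\theta'$ is immediate: inside $V(G)-S$ it is inherited from $\theta$, and any edge meeting $S$ joins two different color classes because each $S$-vertex is alone in its class. The substance is the sign-consistency of $\theta'$ with respect to $\sigma'$, which I would verify by splitting on how many endpoints' colors are new. If both colors are old the condition is inherited from $\tau$; if at least one class involved is a singleton $\{z\}$ with $z\in S$ \emph{and} the other is also a new color, there is at most one edge between the two classes, so nothing to check. The only genuine case is an old color $i$ paired with the singleton new color of some $z \in S$: I must show that all edges from $z$ into class $i$ have a single sign in $\sigma'$.

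This is exactly where the $s$-redundancy of $S$ enters. Suppose $z$ had two neighbors $x,y$ both colored $i$ (so $\theta(x)=\theta(y)=i$, whence $xy \notin E(G)$ by properness) with $\sigma'(zx) \neq \sigma'(zy)$. Then $xzy = UP_3$ in $(G,\sigma')$, so by Definition~\ref{def:redundant} there is $w \in V(G)-S$ with $xwy = UP_3$ in $(G,\sigma')$. Since $w \in V(G)-S$ it has a color $\ell = \theta(w)$, and $\ell \neq i$ (otherwise $xw$ would be a monochromatic edge inside $V(G)-S$). Hence $xw$ and $wy$ are edges of $(G,\sigma)-S$ both joining class $i$ to class $\ell$, so by sign-consistency of $\theta$ they have equal sign in $\tau = \sigma'|_{V(G)-S}$, contradicting that $xwy$ is unbalanced. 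Therefore no such pair $x,y$ exists, and every edge from $z$ to class $i$ has the same sign. This makes $\theta'$ a valid $(k+\abs{S})$-coloring of $(G,\sigma)$ under $\sigma'$, yielding $\chi_s(G,\sigma) \leq \abs{S} + \chi_s((G,\sigma)-S)$.

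I expect the main obstacle to be recognizing that the redundancy hypothesis is precisely the tool that controls the $S$-to-old-class edges: one must see that a sign clash among the edges from a single $z$ to a common color class manifests as an unbalanced $3$-path $xzy$, and that the witness $w$ supplied by redundancy necessarily lies in $V(G)-S$, so that the edges $xw, wy$ are already pinned down by the fixed coloring of $(G,\sigma)-S$. Two smaller points needing care are the compatibility of the global switch $X$ with the subgraph switch (so that $\sigma'$ restricts to $\tau$) and the observation that $w$ cannot reuse the color $i$, which is what guarantees $xw,wy$ genuinely fall under the sign-consistency clause for the pair $\{i,\ell\}$.
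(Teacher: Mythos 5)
Your proof is correct and takes essentially the same route as the paper: both extend an optimal coloring of $(G,\sigma)-S$ by giving each vertex of $S$ a private fresh color, and both resolve the only nontrivial case---a sign clash on two edges $zx$, $zy$ from some $z\in S$ into a common old color class---by invoking $s$-redundancy to produce $w \in V(G)-S$ with $xwy = UP_3$, contradicting the sign-consistency of the base coloring. Your explicit verifications (that the global switch of $X$ restricts to the subgraph switch, that $xy \notin E(G)$, and that $\theta(w) \neq i$) simply spell out details the paper leaves implicit.
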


\begin{proof}
Let $c$ be a coloring of a signed graph $(G,\sigma') -S$ with $\chi_s((G,\sigma)-S)$ colors where $(G,\sigma') \equiv (G,\sigma)$.
We define the coloring $c'$ of $(G,\sigma')$ as follows:  $c'(v) = c(v)$ when $v \notin S$ and $c'(v)$ is a new color when $v \in S$.
Hence $c'$ uses at most $\abs{S} + \chi_s((G,\sigma)-S)$ colors. 

It is left to show that it is indeed a coloring of $(G,\sigma')$.
As $c$ is a coloring, $c'$ does not assign the same color to two adjacent vertices. Suppose, by contradiction, that there exists two edges $xy$ and $x'y'$ of opposite sign such that $c'(x) = c'(x')$ and $c'(y) = c'(y')$. As $c$ is a coloring, all four vertices cannot be in $G-S$. W.l.o.g. suppose that $x \in S$. By definition of $c'$, $x' = x$, $y,y' \notin S$ and $yxy'$ is a $UP_3$ in $(G,\sigma')$. As $S$ is an $s$-redundant set, there exists $w \notin S$ such that $ywy'$ is a  $UP_3$ in $(G,\sigma') -S$. This contradicts the fact that $c$ is a coloring of $(G,\sigma') -S$.
%
%
% Let $\varphi$ be a homomorphism of $(G,\sigma)-S$ to a signed graph $(H,\pi)$ of order $\chi_s((G,\sigma)-S)$. We construct a homomorphism  $\varphi'$ of $(G,\sigma)$ as follows:
% $$\varphi'(v) =
% \left\{
% 	\begin{array}{ll}
% 		\varphi(v)  & \mbox{if } v \notin S, \\
% 		v & \mbox{otherwise}.
% 	\end{array}
% \right.
% $$
% This homomorphism is well defined. To show this we need to prove that the image of $\varphi'$ has no loops and no $UC_2$ (\ie is simple). It does not create loops as $\varphi$ does not. As $\varphi$ is well defined, if there was a $UC_2$, it would come from the identification of two vertices $x$ and $y$ in $(G,\sigma)-S$ such that there is a vertex $z\in S$ for which $xzy$ is a $UP_3$. By definition of $S$, if so, there would be a $UC_2$ in $\varphi((G,\sigma)-S$) obtained by the image of $xwy$, which cannot be. 
%
% Thus $\varphi$ is a well defined homomorphism. Thus $\chi_s(G,\sigma) \leq \abs{S} + \chi_s((G,\sigma)-S)$.
\end{proof}

This result does not hold for any set $S$. For example, if $(G,\sigma)= UC_4$ and $S=\set{v}$ is a single vertex of $G$, then $\chi_s(G,\sigma) = 4$ and $\chi_s((G,\sigma)-v) =2$.

\subsection{Back to Cartesian products of complete signed graphs}
As a direct corollary of Theorem~\ref{thm:cartesianHomCompatibility}, we get the following upper bound on the chromatic number of a Cartesian product of signed graphs.

\begin{corollary}
\label{cor:UpperBoundCartesianProduct}
%\todo{on peut faire mieux: prod - $\floor{k/2}$, utile ?}
If $(G_1,\sigma_1)$, \dots, $(G_k,\sigma_k)$ are $k$ signed graphs, then:
$$ \chi_s((G_1,\sigma_1) \ \square\ \cdots\ \square\ (G_k,\sigma_k)) \leq \prod_{1\leq i \leq k}\chi_s(G_i,\sigma_i).$$ 
\end{corollary}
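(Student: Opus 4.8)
The plan is to reduce the $k$-factor statement to the two-factor case and then use Theorem~\ref{thm:cartesianHomCompatibility} together with the definition of the chromatic number via homomorphisms. For the two-factor case, suppose $\chi_s(G_1,\sigma_1) = p$ and $\chi_s(G_2,\sigma_2) = q$. By definition of the chromatic number there exist signed graphs $(H_1,\pi_1)$ of order $p$ and $(H_2,\pi_2)$ of order $q$ such that $(G_1,\sigma_1) \sto (H_1,\pi_1)$ and $(G_2,\sigma_2) \sto (H_2,\pi_2)$. Applying Theorem~\ref{thm:cartesianHomCompatibility} to these two homomorphisms yields
$$(G_1,\sigma_1) \ssquare (G_2,\sigma_2) \sto (H_1,\pi_1) \ssquare (H_2,\pi_2).$$
The target $(H_1,\pi_1) \ssquare (H_2,\pi_2)$ has vertex set $V(H_1) \times V(H_2)$, hence order $pq$. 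Therefore $(G_1,\sigma_1) \ssquare (G_2,\sigma_2)$ maps to a signed graph of order $pq$, which gives $\chi_s((G_1,\sigma_1) \ssquare (G_2,\sigma_2)) \leq pq = \chi_s(G_1,\sigma_1)\cdot\chi_s(G_2,\sigma_2)$.

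For the general statement I would proceed by induction on $k$. The base case $k = 1$ is trivial (and $k=2$ is exactly the computation above). For the inductive step, I would write the product as $\bigl((G_1,\sigma_1) \ssquare \cdots \ssquare (G_{k-1},\sigma_{k-1})\bigr) \ssquare (G_k,\sigma_k)$, which is legitimate since the Cartesian product of signed graphs is associative (as noted in the excerpt just after the definition). Applying the two-factor bound to these two pieces and then the induction hypothesis to the first piece gives
$$\chi_s\Bigl(\ssquare_{i=1}^k (G_i,\sigma_i)\Bigr) \leq \chi_s\Bigl(\ssquare_{i=1}^{k-1}(G_i,\sigma_i)\Bigr)\cdot\chi_s(G_k,\sigma_k) \leq \prod_{i=1}^{k-1}\chi_s(G_i,\sigma_i)\cdot\chi_s(G_k,\sigma_k),$$
which is the desired product over all $k$ factors.

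I do not expect a serious obstacle here, since the corollary is advertised as a direct consequence of Theorem~\ref{thm:cartesianHomCompatibility}. The one point requiring minor care is confirming that the order of the target Cartesian product is exactly the product of the orders: this is immediate from the fact that $V((H_1,\pi_1)\ssquare(H_2,\pi_2)) = V(H_1)\times V(H_2)$, so no vertices are identified and the order multiplies. The only other thing to verify is that we may freely choose the optimal targets $(H_i,\pi_i)$ realizing each $\chi_s(G_i,\sigma_i)$, which is exactly the homomorphism-based definition of the signed chromatic number given earlier. Everything else is a routine induction, so the whole argument is short.
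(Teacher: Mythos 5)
Your proof is correct and is precisely the argument the paper intends: the corollary is stated there as a direct consequence of Theorem~\ref{thm:cartesianHomCompatibility}, obtained by mapping each factor $(G_i,\sigma_i)$ to an optimal target $(H_i,\pi_i)$ and noting that the order of the product target is $\prod_i |V(H_i)|$. Your explicit induction on $k$ via associativity just spells out the detail the paper leaves implicit, so there is nothing to add.
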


We consider the Cartesian product of balanced and antibalanced complete graphs in our next result. Recall that $K_p^+$ (\resp $K_q^-$) is the complete graph with only positive edges (\resp negative edges). 

\begin{theorem}
\label{th:K+K-}
For every two integers $p,q$ with $p, q \geq 2$, we have 
$$\chi_s(K_p^+\ \square\ K_q^-) = \ceil{\frac{pq}{2}} .$$ 
\end{theorem}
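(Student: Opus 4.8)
The plan is to prove $\chi_s(K_p^+\ \square\ K_q^-) = \ceil{pq/2}$ by establishing matching upper and lower bounds. Let me write $(G,\sigma) = K_p^+\ \square\ K_q^-$, which has $pq$ vertices. The key structural observation is that this graph is \emph{dense}: any two distinct vertices $(u_1,v_1)$ and $(u_2,v_2)$ are non-adjacent only when they differ in \emph{both} coordinates (i.e.\ $u_1\neq u_2$ and $v_1\neq v_2$), since within a fixed row or column $K_p^+$ and $K_q^-$ are complete. Consequently, in any $k$-coloring each color class is an independent set, and independent sets are exactly sets of vertices that pairwise disagree in both coordinates — which is a partial permutation matrix in the $p\times q$ grid. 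The crucial extra constraint for \emph{signed} colorings is that two vertices sharing a color class force a sign condition: if two independent sets (color classes) $i$ and $j$ each contain an edge joining them, those edges must all have the same sign. This typically caps the useful size of a color class at $2$, which is what produces the factor $pq/2$.

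\textbf{Upper bound.} First I would exhibit a signed $k$-coloring with $k=\ceil{pq/2}$ colors. The natural idea is to pair up vertices into $\ceil{pq/2}$ color classes, each of size at most $2$, where each pair consists of two vertices differing in both coordinates (so the pair is independent). I would choose the pairing carefully so that the induced signature between any two color classes is consistent — concretely, one can switch to a convenient equivalent signature and then pair vertex $(u,v)$ with a partner obtained by a fixed-point-free involution on the grid (e.g.\ shifting both coordinates), checking via Proposition~\ref{prop:redundant-BC4} or a direct sign computation that no two same-colored pairs are joined by edges of opposite sign. Since each class has size $\le 2$ and there are $pq$ vertices, this uses $\ceil{pq/2}$ colors.

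\textbf{Lower bound.} This is where I expect the main difficulty, and where the $s$-redundant machinery (Theorem~\ref{thm:redundant}) and the theorem's placement right after it strongly suggest the intended route. I would argue that no signed coloring can have a color class of size $3$ or more: if three vertices $a,b,c$ shared a color, then because the graph is so dense, among them some pair, say via a common neighbor $z$, would create a $UP_3$ that cannot be ``balanced out,'' forcing a monochromatic sign clash. More precisely, I would show that any three pairwise-independent vertices in $K_p^+\ \square\ K_q^-$ admit a vertex $z$ adjacent to two of them by edges of opposite sign that cannot be simultaneously matched, so they cannot all receive the same color in any equivalent signature. Hence every color class has size at most $2$, giving at least $\ceil{pq/2}$ classes. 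Alternatively, one can use Theorem~\ref{thm:redundant} inductively: identify a small $s$-redundant set $S$ (using Proposition~\ref{prop:redundant-BC4} to verify the $BC_4$ condition), peel it off, and set up a recursion on $p$ and $q$ whose base cases are checked directly. The hard part will be the clean verification that color classes cannot exceed size $2$ — i.e.\ ruling out three mutually non-adjacent vertices with a consistent sign pattern — since this requires a careful case analysis over how three vertices can be positioned in the $p\times q$ grid and over all equivalent signatures, exploiting the unbalanced (antibalanced) nature of the $K_q^-$ factor.
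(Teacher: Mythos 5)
Your lower bound is the paper's lower bound in embryo, and it can be completed: three vertices sharing a color must lie in three distinct $K_p^+$-layers and three distinct $K_q^-$-layers (pairwise non-adjacency forces both coordinates to differ), and the decisive step — which your sketch leaves as ``a careful case analysis over all equivalent signatures'' — is a pigeonhole on switch states. Fix the switching set that turns the homomorphism into a coloring; among three identified vertices, two (say $x$ and $y$) are both switched or both unswitched, and any common neighbor $a$ of $x$ and $y$ is joined to them by one positive edge ($K_p^+$ direction) and one negative edge ($K_q^-$ direction). Equal switch states at $x$ and $y$ preserve this disagreement regardless of what happens at $a$, so $xa$ and $ya$ cannot receive the same sign, a contradiction. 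Note this also explains why pairs are fine (switch exactly one of the two), which is the dichotomy your write-up does not isolate.

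The genuine gap is in your upper bound, on two counts. First, your pairing construction is unverified at exactly the hard point: each pair $\{(u,v),(u',v')\}$ is individually mergeable (the two common neighbors give two $UP_3$'s forming a $BC_4$, so switching one element of the pair fixes both), but two pair-classes can be joined by up to four edges, and the $pq/2$ chosen switchings must make all inter-class edges agree \emph{simultaneously}; nothing in your sketch controls this global constraint, a fixed-point-free involution does not exist when $pq$ is odd, and a coordinate shift is never an involution when $p$ is odd, so even the combinatorial skeleton of your pairing needs repair. Second, your fallback misfires: you invoke Theorem~\ref{thm:redundant} for the \emph{lower} bound, but it only produces upper bounds, and naively peeling an $s$-redundant set does not help — a whole $K_q^-$-layer is vacuously $s$-redundant (all neighbors of its vertices outside the layer are pairwise adjacent), yet peeling it gives $\chi_s \leq q + \chi_s(K_{p-1}^+ \ssquare K_q^-)$, i.e.\ roughly $pq$, not $\ceil{pq/2}$. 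The paper's actual argument contains the idea your proposal is missing: it first switches the column $i=0$, then \emph{identifies} $v_{(0,j)}$ with $v_{(1,j+1)}$ (indices mod $q$), verifies via Proposition~\ref{prop:redundant-BC4} that the set $S$ of merged vertices is $s$-redundant in the quotient, and applies Theorem~\ref{thm:redundant} there, so that deleting $S$ leaves exactly $K_{p-2}^+ \ssquare K_q^-$. One thus pays $q$ colors while eliminating $2q$ vertices — this identification step is where the factor $\tfrac{1}{2}$ comes from — and the induction bottoms out at $p=2$ ($BC_4$), $(p,q)=(3,2)$, and $(p,q)=(3,3)$, the last requiring an explicit $5$-coloring of $K_3^+ \ssquare K_3^-$ checked by hand.
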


\begin{figure}[t]
\centering
\subfloat[Notation of the proof]{
\begin{tikzpicture}
\tikzstylemacro{}

\node[n] (11) at (1,1) {};
\node[n, label=below right:$a$] (12) at (1,2) {};
\node[n, label=below right:$x$] (13) at (1,3) {};
\node[n] (21) at (2,1) {};
\node[n, label=below right:$y$] (22) at (2,2) {};
\node[n] (23) at (2,3) {};
\node[n, label=below right:$z$] (31) at (3,1) {};
\node[n] (32) at (3,2) {};
\node[n] (33) at (3,3) {};

\foreach \i in {1,2,3}{
	\draw[r] (\i1) -- (\i2) -- (\i3);
	\draw[r] (\i1) edge[bend left] (\i3);
}

\foreach \j in {1,2,3}{
	\draw[g] (1\j) -- (2\j);
	\draw[g] (2\j) -- (3\j);
	\draw[g] (1\j) edge[bend left] (3\j);
}

\end{tikzpicture}
}\hspace{2cm}
\subfloat[A coloring of $(H,\sigma)$ with $5$ colors]{
\begin{tikzpicture}
\tikzstylemacro{}

\node[ns, label=below right:$1$] (11) at (1,1) {};
\node[ns, label=below right:$2$] (12) at (1,2) {};
\node[ns, label=below right:$3$] (13) at (1,3) {};
\node[ns, label=below right:$4$] (21) at (2,1) {};
\node[n, label=below right:$1$] (22) at (2,2) {};
\node[n, label=below right:$2$] (23) at (2,3) {};
\node[n, label=below right:$3$] (31) at (3,1) {};
\node[n, label=below right:$5$] (32) at (3,2) {};
\node[n, label=below right:$4$] (33) at (3,3) {};

\foreach \i in {1,3}{
	\draw[r] (\i1) -- (\i2) -- (\i3);
	\draw[r] (\i1) edge[bend left] (\i3);
}

\draw[g] (21) -- (22);
\draw[r] (22) -- (23);
\draw[g] (21) edge[bend left] (23);

\foreach \j in {2,3}{
	\draw[r] (1\j) -- (2\j);
	\draw[g] (2\j) -- (3\j);
	\draw[r] (1\j) edge[bend left] (3\j);
}
\draw[g] (11) -- (21);
\draw[r] (21) -- (31);
\draw[r] (11) edge[bend left] (31);
\end{tikzpicture}
}
\caption{The signed graph $(H,\sigma) = K_3^+ \ssquare K_3^-$ of Theorem~\ref{th:K+K-}. The big squared vertices have been switched.}
\label{fig:K+K-}
\end{figure}
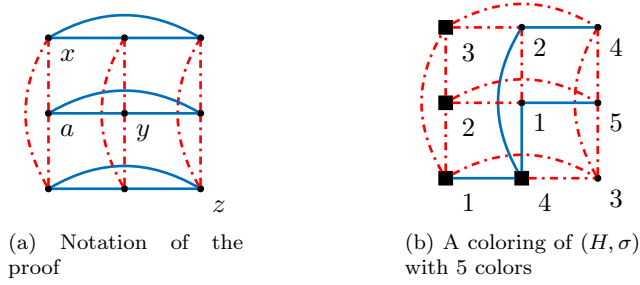

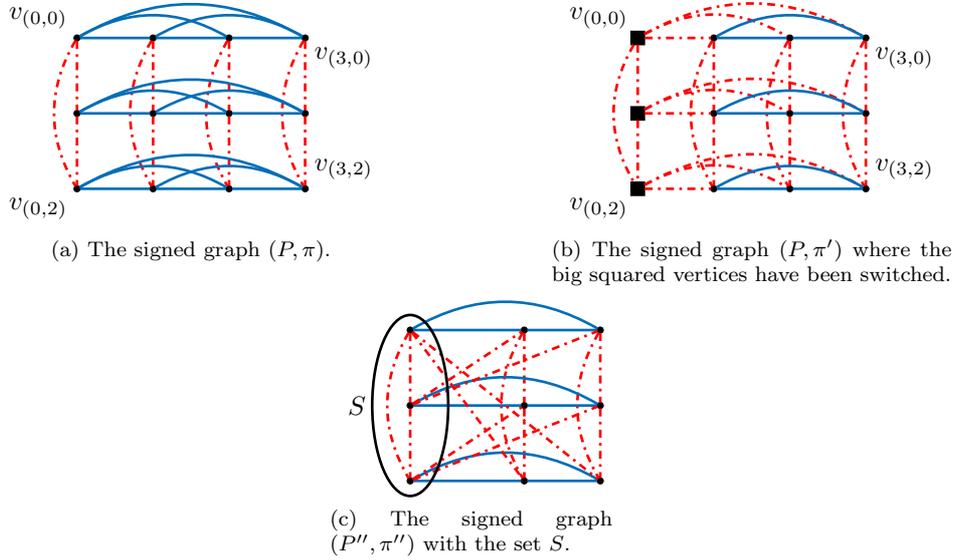
\begin{figure}[t]
\centering
\subfloat[The signed graph $(P,\pi)$.\label{fig:p}]{
\begin{tikzpicture}
\tikzstylemacro{}

\foreach \i in {1,2,3,4}{
	\foreach \j in {1,2,3}{
    	\node[n] (\i\j) at (\i,3-\j) {};
    }
}
\coordinate[label=above left:$v_{(0,0)}$] (i) at (1,2) {};
\coordinate[label=below right:$v_{(3,0)}$] (i) at (4,2) {};
\coordinate[label=below left:$v_{(0,2)}$] (i) at (1,0) {};
\coordinate[label=above right:$v_{(3,2)}$] (i) at (4,0) {};

\foreach \i in {1,2,3,4}{
	\draw[r] (\i1) -- (\i2) -- (\i3);
	\draw[r] (\i1) edge[bend right] (\i3);
}

\foreach \j in {1,2,3}{
	\draw[g] (1\j) -- (2\j);
	\draw[g] (2\j) -- (3\j);
	\draw[g] (3\j) -- (4\j);
	\draw[g] (1\j) edge[bend left] (3\j);
	\draw[g] (1\j) edge[bend left] (4\j);
	\draw[g] (2\j) edge[bend left] (4\j);
}
\end{tikzpicture}
}\hspace{2cm}
\subfloat[The signed graph $(P,\pi')$ where the big squared vertices have been switched.\label{fig:pprime}]{
\begin{tikzpicture}
\tikzstylemacro{}

\foreach \j in {1,2,3}{
    \node[ns] (1\j) at (1,3-\j) {};
}

\foreach \i in {2,3,4}{
	\foreach \j in {1,2,3}{
    	\node[n] (\i\j) at (\i,3-\j) {};
    }
}
\coordinate[label=above left:$v_{(0,0)}$] (i) at (1,2) {};
\coordinate[label=below right:$v_{(3,0)}$] (i) at (4,2) {};
\coordinate[label=below left:$v_{(0,2)}$] (i) at (1,0) {};
\coordinate[label=above right:$v_{(3,2)}$] (i) at (4,0) {};

\foreach \i in {1,2,3,4}{
	\draw[r] (\i1) -- (\i2) -- (\i3);
	\draw[r] (\i1) edge[bend right] (\i3);
}

\foreach \j in {1,2,3}{
	\draw[r] (1\j) -- (2\j);
	\draw[g] (2\j) -- (3\j);
	\draw[g] (3\j) -- (4\j);
	\draw[r] (1\j) edge[bend left] (3\j);
	\draw[r] (1\j) edge[bend left] (4\j);
	\draw[g] (2\j) edge[bend left] (4\j);
}

\end{tikzpicture}
}\hspace{2cm}
\subfloat[The signed graph $(P'',\pi'')$ with the set $S$.\label{fig:psecond}]{
\begin{tikzpicture}
\tikzstylemacro{}

\foreach \j in {1,2,3}{
	\node[n] (2\j) at (1.5,3-\j) {};
}

\foreach \i in {3,4}{
	\foreach \j in {1,2,3}{
    	\node[n] (\i\j) at (\i,3-\j) {};
    }
}
%\coordinate[label=above left:$v_{(0,0)}$] (i) at (1,2) {};
%\coordinate[label=below right:$v_{(3,0)}$] (i) at (4,2) {};
%\coordinate[label=below left:$v_{(0,2)}$] (i) at (1,0) {};
%\coordinate[label=above right:$v_{(3,2)}$] (i) at (4,0) {};

\foreach \i in {2,3,4}{
	\draw[r] (\i1) -- (\i2) -- (\i3);
	\draw[r] (\i1) edge[bend right] (\i3);
}

\foreach \j in {1,2,3}{
	%\draw[r] (1\j) -- (2\j);
	\draw[g] (2\j) -- (3\j);
	\draw[g] (3\j) -- (4\j);
	%\draw[r] (1\j) edge[bend left] (3\j);
	%\draw[r] (1\j) edge[bend left] (4\j);
	\draw[g] (2\j) edge[bend left] (4\j);
}
\draw[r] (22) -- (31);
\draw[r] (22) -- (41);

\draw[r] (23) -- (32);
\draw[r] (23) -- (42);

\draw[r] (21) -- (33);
\draw[r] (21) -- (43);

\draw[line width=1pt] (1.5,1) ellipse (0.5cm and 1.2cm);
\node (i) at (0.8,1) {$S$};

\end{tikzpicture}
}
\caption{The signed graphs $(P,\pi)$, $(P,\pi')$ and $(P'',\pi'')$ of Theorem~\ref{th:K+K-} when $(P,\pi) = K_4^+ \ssquare K_3^-$.}
\label{fig:K+K-2}
\end{figure}

\begin{proof}
Let us note $ (P,\pi) = K_p^+\ \square\ K_q^-$. By symmetry between the sets of positive and negative edges, we can suppose $p\geq q$.
First let us show that $\chi_s(P,\pi) \geq \ceil{\frac{pq}{2}}$. 

Suppose it is not the case. Let $\varphi$ be an optimal homomorphism of $(P,\pi)$. By the pigeon hole principle, there exist $x$, $y$ and $z$ three vertices of the Cartesian product with the same image by $\varphi$. They belong to three distinct  $K_p^+$-layers and three distinct $K_q^-$-layer as these are complete graphs. Consider the subgraph $(H,\sigma)$ of $(P,\pi)$ composed of vertices which are in the same $K_p^+$-layers as one of $x$, $y$, $z$ and in the same $K_q^+$-layers as one of $x$, $y$ and $z$. We have $(H,\sigma) = K_3^+\ssquare K_3^-$ (see Figure~\ref{fig:K+K-}).

By assumption $x$, $y$ and $z$ of $(H,\sigma)$ are identified by $\varphi$ (possibly after switching some of them). By the pigeon hole principle, two of $x$, $y$ and $z$ are both switched or both non-switched. Without loss of generality suppose they are $x$ and $y$. Then if $a$ is one of their common neighbors in $H$, the edges $xa$ and $ya$ are of different signs, thus $x$ and $y$ cannot be identified. This is a contradiction.

\medskip

We now prove that $\chi_s(P,\pi) \leq \ceil{\frac{pq}{2}}$ by induction. 
%By symmetry suppose that $p \geq q$. 
%We will prove this statement by induction.
If $p=2$, then $(P,\pi) \equiv BC_4$ and $\chi_s(P,\pi) = 2 \leq 2$. If $p=3$ and $q=2$, then $(P,\pi) \equiv BC_3 \ssquare K_2$ whose chromatic number is $3$. If $p=3$ and $q=3$, then $(P,\pi) \equiv K_3^+\ssquare K_3^-$. In this case, we have $\chi_s(P,\pi) = 5$, as Figure~\ref{fig:K+K-} gives a $5$-coloring of $(P,\pi)$.

Now we can assume $p\geq 4$. 
Let  $V(P) = \set{v_{(i,j)}, 0\leq i < p, 0\leq j < q}$ such that for every $i$, the set $\set{v_{(i,j)}}_{0\leq j < q}$ induces a negative complete graph and for every $j$, the set $\set{v_{(i,j)}}_{0\leq i < p}$ induces a positive complete graph (see Figure~\ref{fig:p}). Now switch all vertices in $\setcond{v_{(i,j)}}{ i =0}$ to obtain the signed graph $(P,\pi')$ (see Figure~\ref{fig:pprime}) and then identify $v_{(0,j)}$ with $v_{(1,j+1)}$ (which are non adjacent) for every $j \in \setk{0,q-1}$, where indices are taken modulo $q$, to obtain the graph $(P'',\pi'')$ (see Figure~\ref{fig:psecond}). 
Let $S$ be the set of identified vertices in $(P',\pi')$. We want to show that $S$ is $s$-redundant in order to use the induction hypothesis. Take $z \in S$ and $x,y \in N(z) \setminus S$ such that $xy \notin E(P'')$. If $xzy$ is an unbalanced path of length $2$, then $x$ is some $v_{(i,j)}$ and $y$ is some $v_{(k,j+1)}$ with $i,k \geq 2$. For $a = v_{(i,j+1)}$, $xayz$ is a $BC_4$. 

By Proposition~\ref{prop:redundant-BC4}, $S$ is $s$-redundant and thus $$\chi_s(P,\pi) \leq \chi_s(P'',\pi'') \leq \abs{S} + \chi_s((P',\pi') - S)$$ by Theorem~\ref{thm:redundant}. 
By induction hypothesis, as $(P'',\pi'') - S = K_{p-2}^+ \ssquare K_q^-$, we get $\chi_s((P'',\pi'') - S) \leq \ceil{\frac{(p-2)q}{2}}$. Thus $\chi_s(P,\pi) \leq q + \ceil{\frac{pq}{2}} - q \leq \ceil{\frac{pq}{2}}$.
\end{proof}

Note that $\chi_s(K_p^+\ssquare K_p^-) = O(\Delta^2)$ where $\Delta$ is the maximum degree of $K_p^+\ssquare K_p^-$ (\ie $\Delta = 2p-1)$. Indeed, the chromatic number is $O(p^2)$ while $\Delta^2 = (2p-1)^2 = 4p^2 -4p +1 $. 
Also, for this Cartesian product, the upper bound of Corollary~\ref{cor:UpperBoundCartesianProduct} is $p^2$ while we proved in Theorem~\ref{th:K+K-} that the chromatic number is $\ceil{\frac{p^2}{2}}$. We thus have an example where the chromatic number is greater than half the simple upper bound.
\begin{question}
What is the supremum of the set of real numbers $\lambda \in [\frac{1}{2},1]$ such that there exist signed graphs $(G_1,\sigma_1),\dots, (G_k,\sigma_k)$, each with at least one edge, such that: $$ \chi_s((G_1,\sigma_1) \ \square\ \cdots\ \square\ (G_k,\sigma_k)) \leq \lambda \prod_{1\leq i \leq k}\chi_s(G_i,\sigma_i)?$$
\end{question}

\begin{figure}[t]
\centering
\begin{tikzpicture}[scale=0.03000]
\tikzstyle{n} = [draw,circle,minimum size=0.8mm,inner sep=0pt,outer sep=0pt,fill=black];
\tikzstyle{b} = [line width=1pt, color=NavyBlue]; 
\tikzstyle{r} = [line width=1pt, color=red, dashed]; 
\node[n] (v0) at (200.00,0.00) {};
\node[n] (v1) at (187.00,68.00) {};
\node[n] (v2) at (153.00,128.00) {};
\node[n] (v3) at (100.00,173.00) {};
\node[n] (v4) at (34.00,196.00) {};
\node[n] (v5) at (-34.00,196.00) {};
\node[n] (v6) at (-99.00,173.00) {};
\node[n] (v7) at (-153.00,128.00) {};
\node[n] (v8) at (-187.00,68.00) {};
\node[n] (v9) at (-200.00,0.00) {};
\node[n] (v10) at (-187.00,-68.00) {};
\node[n] (v11) at (-153.00,-128.00) {};
\node[n] (v12) at (-100.00,-173.00) {};
\node[n] (v13) at (-34.00,-196.00) {};
\node[n] (v14) at (34.00,-196.00) {};
\node[n] (v15) at (99.00,-173.00) {};
\node[n] (v16) at (153.00,-128.00) {};
\node[n] (v17) at (187.00,-68.00) {};
\draw (v0) edge[b] (v1);
\draw (v0) edge[b] (v2);
\draw (v0) edge[b] (v3);
\draw (v0) edge[b] (v4);
\draw (v0) edge[r] (v5);
\draw (v0) edge[b] (v6);
\draw (v0) edge[b] (v7);
\draw (v0) edge[r] (v8);
\draw (v0) edge[b] (v9);
\draw (v0) edge[b] (v10);
\draw (v0) edge[b] (v11);
\draw (v0) edge[b] (v12);
\draw (v0) edge[r] (v13);
\draw (v0) edge[b] (v14);
\draw (v0) edge[b] (v15);
\draw (v0) edge[r] (v16);
\draw (v0) edge[r] (v17);
\draw (v1) edge[b] (v2);
\draw (v1) edge[b] (v3);
\draw (v1) edge[b] (v4);
\draw (v1) edge[b] (v5);
\draw (v1) edge[r] (v6);
\draw (v1) edge[r] (v7);
\draw (v1) edge[b] (v8);
\draw (v1) edge[b] (v9);
\draw (v1) edge[r] (v10);
\draw (v1) edge[r] (v11);
\draw (v1) edge[b] (v12);
\draw (v1) edge[b] (v13);
\draw (v1) edge[b] (v14);
\draw (v1) edge[r] (v15);
\draw (v1) edge[r] (v16);
\draw (v1) edge[b] (v17);
\draw (v2) edge[b] (v3);
\draw (v2) edge[r] (v4);
\draw (v2) edge[r] (v5);
\draw (v2) edge[b] (v6);
\draw (v2) edge[b] (v7);
\draw (v2) edge[b] (v8);
\draw (v2) edge[b] (v9);
\draw (v2) edge[r] (v10);
\draw (v2) edge[r] (v11);
\draw (v2) edge[r] (v12);
\draw (v2) edge[r] (v13);
\draw (v2) edge[b] (v14);
\draw (v2) edge[r] (v15);
\draw (v2) edge[b] (v16);
\draw (v2) edge[r] (v17);
\draw (v3) edge[b] (v4);
\draw (v3) edge[b] (v5);
\draw (v3) edge[r] (v6);
\draw (v3) edge[b] (v7);
\draw (v3) edge[b] (v8);
\draw (v3) edge[r] (v9);
\draw (v3) edge[r] (v10);
\draw (v3) edge[b] (v11);
\draw (v3) edge[b] (v12);
\draw (v3) edge[r] (v13);
\draw (v3) edge[r] (v14);
\draw (v3) edge[b] (v15);
\draw (v3) edge[r] (v16);
\draw (v3) edge[r] (v17);
\draw (v4) edge[r] (v5);
\draw (v4) edge[r] (v6);
\draw (v4) edge[b] (v7);
\draw (v4) edge[b] (v8);
\draw (v4) edge[b] (v9);
\draw (v4) edge[r] (v10);
\draw (v4) edge[b] (v11);
\draw (v4) edge[r] (v12);
\draw (v4) edge[b] (v13);
\draw (v4) edge[b] (v14);
\draw (v4) edge[b] (v15);
\draw (v4) edge[b] (v16);
\draw (v4) edge[b] (v17);
\draw (v5) edge[r] (v6);
\draw (v5) edge[r] (v7);
\draw (v5) edge[r] (v8);
\draw (v5) edge[r] (v9);
\draw (v5) edge[b] (v10);
\draw (v5) edge[r] (v11);
\draw (v5) edge[r] (v12);
\draw (v5) edge[r] (v13);
\draw (v5) edge[b] (v14);
\draw (v5) edge[b] (v15);
\draw (v5) edge[b] (v16);
\draw (v5) edge[r] (v17);
\draw (v6) edge[b] (v7);
\draw (v6) edge[b] (v8);
\draw (v6) edge[r] (v9);
\draw (v6) edge[r] (v10);
\draw (v6) edge[b] (v11);
\draw (v6) edge[r] (v12);
\draw (v6) edge[r] (v13);
\draw (v6) edge[r] (v14);
\draw (v6) edge[b] (v15);
\draw (v6) edge[r] (v16);
\draw (v6) edge[b] (v17);
\draw (v7) edge[b] (v8);
\draw (v7) edge[b] (v9);
\draw (v7) edge[b] (v10);
\draw (v7) edge[r] (v11);
\draw (v7) edge[r] (v12);
\draw (v7) edge[b] (v13);
\draw (v7) edge[b] (v14);
\draw (v7) edge[r] (v15);
\draw (v7) edge[b] (v16);
\draw (v7) edge[r] (v17);
\draw (v8) edge[r] (v9);
\draw (v8) edge[r] (v10);
\draw (v8) edge[b] (v11);
\draw (v8) edge[b] (v12);
\draw (v8) edge[r] (v13);
\draw (v8) edge[r] (v14);
\draw (v8) edge[b] (v15);
\draw (v8) edge[b] (v16);
\draw (v8) edge[b] (v17);
\draw (v9) edge[r] (v10);
\draw (v9) edge[r] (v11);
\draw (v9) edge[b] (v12);
\draw (v9) edge[b] (v13);
\draw (v9) edge[b] (v14);
\draw (v9) edge[r] (v15);
\draw (v9) edge[b] (v16);
\draw (v9) edge[b] (v17);
\draw (v10) edge[r] (v11);
\draw (v10) edge[b] (v12);
\draw (v10) edge[r] (v13);
\draw (v10) edge[r] (v14);
\draw (v10) edge[b] (v15);
\draw (v10) edge[b] (v16);
\draw (v10) edge[b] (v17);
\draw (v11) edge[b] (v12);
\draw (v11) edge[b] (v13);
\draw (v11) edge[r] (v14);
\draw (v11) edge[b] (v15);
\draw (v11) edge[r] (v16);
\draw (v11) edge[b] (v17);
\draw (v12) edge[r] (v13);
\draw (v12) edge[r] (v14);
\draw (v12) edge[b] (v15);
\draw (v12) edge[r] (v16);
\draw (v12) edge[b] (v17);
\draw (v13) edge[r] (v14);
\draw (v13) edge[r] (v15);
\draw (v13) edge[b] (v16);
\draw (v13) edge[r] (v17);
\draw (v14) edge[r] (v15);
\draw (v14) edge[b] (v16);
\draw (v14) edge[r] (v17);
\draw (v15) edge[b] (v16);
\draw (v15) edge[r] (v17);
\draw (v16) edge[b] (v17);
\end{tikzpicture}
\caption{A signed graph $K$ of order $18$ such that $\chi_s(K\ssquare K_2) = 25$.}
\label{fig:K18+7}
\end{figure}
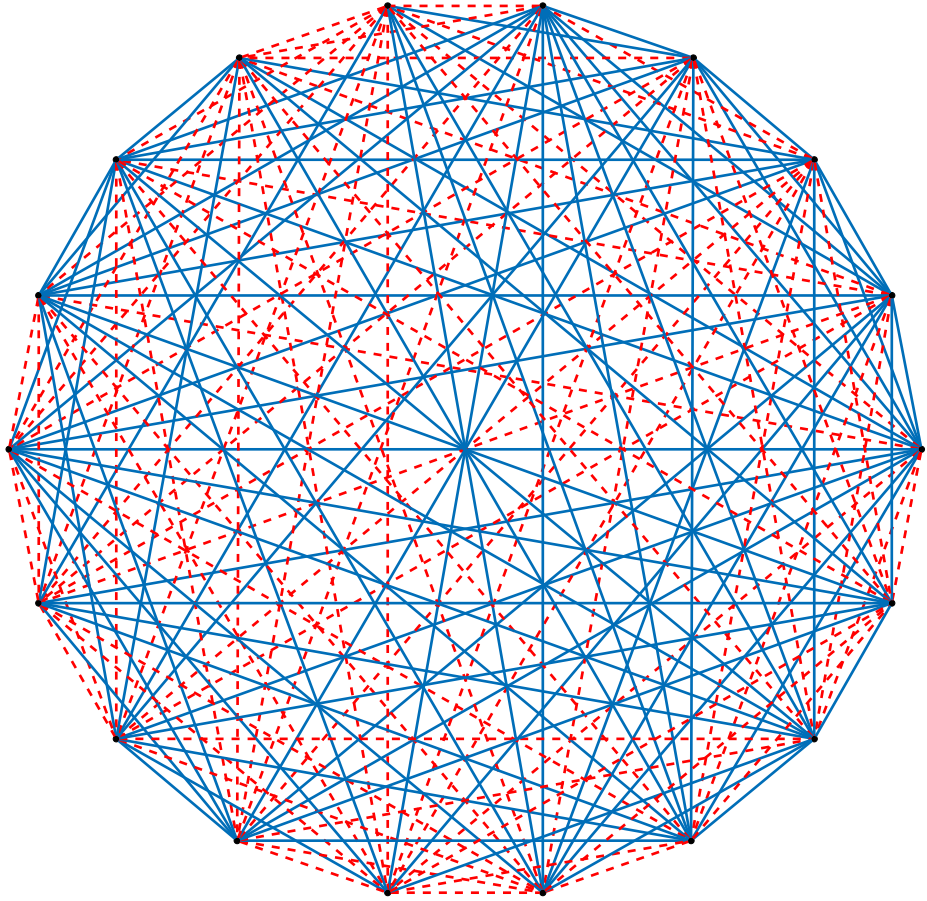

 In Figure~\ref{fig:K18+7}, we have an example of a graph $K$ such that $K \ssquare K_2$ has chromatic number $25$ (checked by computer). The ratio between the chromatic number and the upper bound is $\frac{25}{36} = 0.69444$. It is the largest ratio we have found by randomly sampling bigger and bigger complete signed graphs. This leads us to believe that the following conjecture holds.
 
 \begin{conj}
For every fixed $\varepsilon > 0$, there exist signed graphs $(G_1,\sigma_1),\dots, (G_k,\sigma_k)$, with each at least one edge, such that: $$ \chi_s((G_1,\sigma_1) \ \square\ \cdots\ \square\ (G_k,\sigma_k)) \geq (1-\varepsilon) \cdot \prod_{1\leq i \leq k}\chi_s(G_i,\sigma_i).$$
\end{conj}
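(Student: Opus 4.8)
The plan is to prove the conjecture with only $k=2$ factors, by taking $(G_1,\sigma_1)=(K_n,\sigma)$ for a uniformly random signature $\sigma$ and $(G_2,\sigma_2)=K_2^+$, and showing that the ratio tends to $1$ as $n\to\infty$. Any signed complete graph is a core: a homomorphism of $(K_n,\sigma)$ must send the $n$ pairwise-adjacent vertices to $n$ distinct pairwise-adjacent vertices, so $\chi_s(K_n,\sigma)=n$, and clearly $\chi_s(K_2^+)=2$. Hence the product bound of Corollary~\ref{cor:UpperBoundCartesianProduct} is $2n$, and it suffices to show $\chi_s((K_n,\sigma)\ssquare K_2^+)\geq (1-\varepsilon)\,2n$ for all large $n$.

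First I would reduce to a rigidity statement about $\sigma$. Write the two $K_n$-layers as $\set{v_i^0}$ and $\set{v_i^1}$, joined by the positive prism edges $v_i^0 v_i^1$. In the underlying graph $K_n\ssquare K_2$ each layer is a clique, so every independent set—hence every colour class of an $s$-colouring—has at most one vertex per layer, i.e.\ size at most $2$; a colour class of size $2$ is a pair $\set{v_i^0,v_j^1}$ with $i\neq j$. Thus an optimal $s$-colouring is a partial matching between the layers together with a single global switching making it sign-consistent, and $\chi_s((K_n,\sigma)\ssquare K_2^+)=2n-\nu(\sigma)$, where $\nu(\sigma)$ is the largest number of pairs that can be identified simultaneously. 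For two identified pairs $\set{v_i^0,v_{\pi(i)}^1}$ and $\set{v_k^0,v_{\pi(k)}^1}$, the layer edges $v_i^0v_k^0$ and $v_{\pi(i)}^1v_{\pi(k)}^1$ must receive equal signs after switching; so if $T$ is the set of matched layer-$0$ indices and $\pi\colon T\to\setk{n}$ the induced injection (with $\pi(i)\neq i$), the induced signed graphs $(K_T,\sigma)$ and $(K_{\pi(T)},\sigma)$ must be switching-isomorphic via $\pi$. This condition is only necessary, so $\nu(\sigma)\leq M(\sigma)$, where $M(\sigma)$ is the largest size of such a switching-isomorphic pair.

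Next I would bound $M(\sigma)$ for random $\sigma$ by a first-moment argument. Encoding signs in $\mathbb{F}_2$ and switchings as coboundaries (a space of dimension $n-1$ on $n$ vertices), the event that a fixed $(T,\pi)$ with $\abs{T}=m$ is switching-isomorphic is a system of $\binom{m}{2}$ linear conditions on $\sigma$; when $T$ and $\pi(T)$ are disjoint these involve independent coordinates and the event has probability $2^{-\binom{m}{2}+(m-1)}$. Summing over the at most $\binom{n}{m}\,n^{m}\leq n^{2m}$ choices of $(T,\pi)$ bounds the expected count by $n^{2m}2^{-\binom{m}{2}+m}$, which tends to $0$ once $m\geq C\log n$ for a suitable constant $C$. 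Hence whp $M(\sigma)=O(\log n)=o(n)$, so $\chi_s((K_n,\sigma)\ssquare K_2^+)=2n-o(n)\geq(1-\varepsilon)\cdot 2n$ for all large $n$, and fixing any such $\sigma$ yields the required factors.

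The main obstacle is the case where $T$ and $\pi(T)$ overlap, since then the two induced signatures share edges and the coordinates of the linear system are no longer independent; in the extreme $T=\pi(T)$ the condition becomes the existence of a non-trivial switching-automorphism of $(K_T,\sigma)$, whose probability depends delicately on the cycle structure of $\pi$ on the edge set. Controlling these correlated events—so that the union bound still closes with a uniform probability estimate of the form $2^{-\Omega(m^2)}$—is the technical heart of the argument. I would handle it by computing, for each $\pi$, the $\mathbb{F}_2$-rank of the linear map $\sigma\mapsto\sigma\oplus\sigma^{\pi}$ modulo the switching space, showing this rank is $\Omega(m^2)$ whenever $\pi$ moves many points, while near-identity permutations (few moved points) contribute only few pairs and can be absorbed directly into the estimate for $\nu(\sigma)$.
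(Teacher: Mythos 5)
You should first note that the paper does not prove this statement at all: it is stated as a conjecture, supported only by computational evidence (the signed $K_{18}$ of Figure~\ref{fig:K18+7} with $\chi_s(K\ssquare K_2)=25$, ratio $25/36\approx 0.69$). So there is no proof of the paper's to compare against; the question is whether your blind argument stands on its own, and as far as I can check, its skeleton is sound and would in fact settle the conjecture. The reduction is correct: in $K_n\ssquare K_2$ the independence number is $2$, a size-$2$ class must be $\set{v_i^0,v_{\pi(i)}^1}$ with $\pi(i)\neq i$ (the prism edge forbids $\pi(i)=i$), and the same-sign condition on the pair of clique edges $v_i^0v_k^0$, $v_{\pi(i)}^1v_{\pi(k)}^1$, with $t_i = s(v_i^0)s(v_{\pi(i)}^1)$, is exactly the statement that $\sigma$ and $\sigma^\pi$ differ by a coboundary on $K_T$. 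This gives $\chi_s((K_n,\sigma)\ssquare K_2^+)\geq 2n-M(\sigma)$, your disjoint-case probability $2^{-\binom{m}{2}+m-1}$ is right, and restriction of a switching-isomorphic pair to a subset lets you run the union bound at size exactly $m=\Theta(\log n)$.

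Your flagged obstacle — overlapping $T$ and $\pi(T)$ — closes by exactly the computation you propose, and more easily than you fear, because $\pi$ is forced to be a fixed-point-free injection, so the ``near-identity'' case you planned to absorb separately never occurs. Concretely: the functionals $f_{ik}=e_{ik}+e_{\pi(i)\pi(k)}$ satisfy a dependency $\sum_{e\in S}f_e=0$ if and only if $\hat\pi(S)=S$, where $\hat\pi$ is the (injective) induced map on edges; so the corank equals the number $c$ of $\hat\pi$-cycles inside $\binom{T}{2}$. A fixed edge of $\hat\pi$ requires $\pi(i)=k$, $\pi(k)=i$, and such pairs are disjoint, so there are at most $\floor{m/2}$ of them; every other cycle has length at least $2$, whence $c\leq \floor{m/2}+\tfrac12\left(\binom{m}{2}-\floor{m/2}\right)$ and the rank is at least $\tfrac{m(m-2)}{4}$. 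This yields the uniform bound $2^{m-1-m(m-2)/4}$ for every admissible $(T,\pi)$, the union bound closes at $m\geq C\log n$, and whp $\chi_s((K_n,\sigma)\ssquare K_2^+)\geq 2n-O(\log n)\geq (1-\varepsilon)\cdot 2n$. Two caveats: as written, the rank claim is a plan rather than a proof, so you must include the orbit count above (or an equivalent) before this counts as complete; and your assertion $\chi_s=2n-\nu(\sigma)$ should be weakened to the inequality $\geq 2n-M(\sigma)$, which is all you use, since your switching-isomorphism condition is only necessary. Note also that your prediction $2n-\Theta(\log n)$ is consistent with, and explains, the paper's empirical ratios at small $n$ — so if you write this up carefully, you will have upgraded the paper's conjecture to a theorem, which is well worth doing.
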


\section{Chromatic number of Cartesian products of signed cycles}
\label{sec:cartesian:cycles}

The goal of this section is to determine the chromatic number of the Cartesian product of two signed cycles. As there are four kind of cycles (balanced/unbalanced and even/odd length), we have a number of cases to analyse. In most cases some simple observations are sufficient to conclude. For the other cases, we need the following lemma whose proof is given in subsections \ref{sec:cycles:strartproof} to  \ref{sec:proof-ending},  due to its length.

\begin{lemma}
\label{lem:ProdCyclePas4}
For every two integers $p$,$q \in \NN$:
$$ \chi_s(UC_q\ssquare BC_{2p+1} ) > 4.$$
\end{lemma}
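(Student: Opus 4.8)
The plan is to argue by contradiction: assume there is a homomorphism $\varphi \colon UC_q \ssquare BC_{2p+1} \sto (H,\pi)$ with $\abs{V(H)} \le 4$ and derive a contradiction. First I would normalise the signature of $G := UC_q \ssquare BC_{2p+1}$. Using Theorem~\ref{thm:cartesianHomCompatibility}, I may assume that every edge in the $BC_{2p+1}$-direction is positive and that every edge in the $UC_q$-direction is positive except for one \emph{wrap} edge per $UC_q$-layer. With this signature, by Observation~\ref{lemma:balancing-and-homorphism} a cycle of $G$ is unbalanced if and only if its winding number in the $UC_q$-direction is odd; in particular every elementary $4$-cycle is a $BC_4$, every $UC_q$-layer is unbalanced, and every $BC_{2p+1}$-layer is balanced and of odd length. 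Since $G$ contains an odd cycle we have $\chi(H) \ge 3$, so $H$ contains a triangle; since some $UC_q$-layer maps to an unbalanced closed walk, $H$ is unbalanced; and since some $BC_{2p+1}$-layer maps to a balanced closed walk of odd length, $H$ must contain a balanced odd closed walk. I would first dispose of $\abs{V(H)} \le 3$: the only signed triangles are $BC_3$, which is balanced and hence cannot receive an unbalanced walk, and $UC_3$, which is antibalanced, so every odd closed walk in it is unbalanced and it cannot receive $BC_{2p+1}$. Triangle-free graphs on at most three vertices cannot receive an odd cycle at all. Hence $\abs{V(H)} = 4$.

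Next I would classify the admissible targets on four vertices. Combining the three conditions just isolated, namely that $(H,\pi)$ is unbalanced, contains a balanced odd closed walk (hence a balanced triangle), and may be assumed to be a core, yields a short explicit list of signed graphs: up to switching and isomorphism, a balanced triangle together with a fourth vertex whose incident edges supply the required unbalance. For each such $(H,\pi)$ I would fix one representative signature. The antibalanced and balanced copies of $K_4$, as well as all bipartite or triangle-free candidates, are excluded at this stage.

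Writing $f_j(i) = \varphi(i,j)$ for the image of the vertex with $UC_q$-coordinate $i$ and $BC_{2p+1}$-coordinate $j$, the homomorphism condition translates, via the normalisation, into three families of constraints: each \emph{row} $i \mapsto f_j(i)$ is an unbalanced closed walk of length $q$ in $H$; each \emph{column} $j \mapsto f_j(i)$ is a balanced closed walk of odd length $2p+1$ all of whose edges are positive; and every elementary square $f_j(i),\, f_j(i{+}1),\, f_{j+1}(i{+}1),\, f_{j+1}(i)$ is balanced. The crucial feature is rigidity: because every square is a $BC_4$ and every column-edge is positive, the passage from row $j$ to row $j+1$ acts like a fixed \emph{parallel transport} of the closed walk along $H$, which in so small a target is realised by an automorphism or switching of $H$. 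I would track this transport as $j$ runs once around the odd cycle $BC_{2p+1}$: closing the column after $2p+1$ steps forces the composite transport to be trivial, whereas the single unbalanced wrap edge in each $UC_q$-layer forces the transport accumulated around a row to be a non-trivial, sign-reversing involution. Reconciling the odd length $2p+1$ with the unbalanced closing of the rows then produces, for each admissible $H$, either a failure of properness or a balanced row, contradicting the winding-parity description of balance above.

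The main obstacle is exactly the heart of the previous paragraph: organising the parallel-transport bookkeeping uniformly across all admissible four-vertex targets and both parities of $q$, while simultaneously honouring the \emph{odd and balanced} closing condition in the $BC_{2p+1}$-direction and the \emph{unbalanced} closing condition in the $UC_q$-direction. The case where $q$ is even is the most delicate: there $\chi_s(UC_q) = 4$ already by Theorem~\ref{thm:cyclespasproduit}, so each $UC_q$-layer must use all four vertices of $H$, the subgraph bound only yields $\chi_s \ge 4$, and the extra colour must be extracted purely from the two-dimensional interaction. This is presumably why the argument is long enough to occupy several subsections.
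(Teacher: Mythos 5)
Your set-up is sound and matches the paper's opening moves: the reduction to a four-vertex target is exactly the paper's first step, which in fact sharpens your classification stage via Theorem~\ref{thm:SignedCompleteOrder4} to the single target $\KIVMixed$ (Claim~\ref{claim:toK4Mixed}: every equivalent signature of $BC_{2p+1}$ has a positive edge and every equivalent signature of $UC_q$ has a negative one, ruling out $K_4^+$ and $K_4^-$), and your observation that every elementary $4$-cycle of the product is balanced is precisely the input to the paper's Claim~\ref{claim:flat-border}. The genuine gap is the step you yourself flag as the heart of the argument: the claim that, because every square is a $BC_4$ and column edges are positive, the passage from row $j$ to row $j+1$ is a \emph{fixed} parallel transport realised by an automorphism or switching of $H$. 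This is false. In $\KIVMixed$ with unique negative edge $ab$, take row $j$ to be $c,d,c,d,\dots$ and row $j+1$ to be $d,c,a,c,\dots$: all horizontal and vertical pairs are edges, every elementary square involved is all-positive and hence balanced, yet position $0$ sends $c$ to $d$ while position $2$ sends $c$ to $a$. Conceptually, each vertex of $K_4$ has three neighbours and the balanced-square condition eliminates at most one candidate per step, so consecutive rows are nowhere near being related by a single group element; consequently ``the composite transport around the column is trivial'' and ``the row transport is a sign-reversing involution'' have no foundation, and the final reconciliation collapses. No pointwise-rigid bookkeeping of this kind can be repaired, because the homomorphism is genuinely flabby locally.

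What the paper does instead is extract a \emph{global} parity invariant, precisely because no local rigidity holds. It realises $P$ as a quotient of the infinite grid ${G^\infty}$ and attaches to every closed walk winding numbers $\tau_x,\tau_y$, proving $q\,\tau_y(W)+\tau_x(W)\equiv 0 \pmod 2$ for every closed walk $W$ of even length (Proposition~\ref{prop:sum-tau}). From the coloring it then builds the regions induced by the colour pairs $\set{a,b}$ and $\set{c,d}$, constructs closed border walks $\mathcal W_B$ inside each region (Definition~\ref{def:border-line}, with Claims~\ref{claim:flat-border} and~\ref{claim:number-of-border-neighbors} controlling the local structure), and shows that the parity of the number of $ab$-coloured edges on any vertical or horizontal layer equals the total winding parity of these border walks across that layer (Proposition~\ref{prop:sum-cross} together with Claims~\ref{claim:cross-quotient}, \ref{claim:ac-ad-ab}, \ref{claim:fc-cross} and~\ref{claim:fc-tau}). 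Applied to a $UC_q$-layer this forces $\sum_{W\in\mathcal W_a}\tau_x(W)\equiv 1 \pmod 2$, while a $BC_{2p+1}$-layer forces $\sum_{W\in\mathcal W_a}\tau_y(W)\equiv 0\pmod 2$, contradicting Proposition~\ref{prop:sum-tau}. To rescue your draft you would have to replace the automorphism-transport by a homological argument of this kind; note also that the hard case you single out ($q$ even) is handled by the same parity computation with no case distinction.
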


%Due to size constraints we \aftercaldam{ do not include the proof of 
%%give the proof of 
%Lemma \ref{lem:ProdCyclePas4}. Nonetheless, the proof is available in the full version of this paper which you can find on the author's web page.% and Theorem \ref{thm:cycles}.
%}
%% in Appendix \ref{appendix:cycles}. 
%\vlonly{We will prove Lemma~\ref{lem:ProdCyclePas4} in the rest of this section.} 
With this lemma, we can state the main result of this section.

\begin{theorem}\label{thm:cycles}
If $(C_1,\sigma)$ and $(C_2,\sigma_2)$ are two signed cycles, then the chromatic number of $(P,\pi) = (C_1,\sigma_1)\ssquare (C_2,\sigma_2)$ is given by Table~\ref{fig:table_chi_cycle_prod}, depending on the types of $(C_1,\sigma_1)$ and $(C_2,\sigma_2)$.
\end{theorem}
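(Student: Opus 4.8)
The plan is to treat the sixteen ordered combinations of the four cycle types as a case analysis, using commutativity of the product to reduce to unordered pairs, and then to match a lower and an upper bound in each cell of Table~\ref{fig:table_chi_cycle_prod}. Throughout I will lean on two generic bounds. For the lower bound, each factor occurs as a layer of $(P,\pi)$, and restricting a homomorphism to such a subgraph is again a homomorphism of signed graphs (closed walks of the layer are closed walks of $P$), so $\chi_s(P,\pi) \geq \max(\chi_s(C_1,\sigma_1), \chi_s(C_2,\sigma_2))$, whose values are read off from Theorem~\ref{thm:cyclespasproduit}. For the upper bound, Corollary~\ref{cor:UpperBoundCartesianProduct} gives $\chi_s(P,\pi) \leq \chi_s(C_1,\sigma_1)\cdot \chi_s(C_2,\sigma_2)$.

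Next, several reductions collapse most of the table. First, a $BC_{even}$ factor is homomorphically equivalent to $K_2^+$: it switches to all-positive, a positive edge embeds in it, and it maps to $K_2^+$; hence by Theorem~\ref{thm:cartesianHomCompatibility} one has $\chi_s((C_1,\sigma_1)\ssquare X) = \chi_s(K_2^+ \ssquare X)$ whenever $(C_1,\sigma_1) \in BC_{even}$. Second, a product of two balanced graphs is balanced, since every square of a product has sign $+1$ (it consists of two copies of an edge of each factor) and all layers are balanced; a balanced signed graph switches to all-positive, so $\chi_s$ equals $\chi$ of its underlying graph, and with Theorem~\ref{thm:ordinari-chi-prod} this settles all three balanced--balanced cells as $\max(\chi(C_1),\chi(C_2))$. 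Third, I will use antibalance: one checks that $K_2^+ \ssquare UC_{odd}$ and $UC_{odd}\ssquare UC_{odd}$ are antibalanced (each odd-cycle layer has sign $(-1)^{\text{length}}$ and every square has sign $+1$), so each switches to all-negative and maps to $K_3^-$ because its underlying graph is $3$-chromatic; this yields the value $3$ for those cells, matching the layer lower bound.

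The remaining, genuinely hard cells are those containing an unbalanced cycle not covered above: $BC_{even}\ssquare UC_{even}$, $UC_{even}\ssquare UC_{even}$, $UC_{even}\ssquare UC_{odd}$, $BC_{odd}\ssquare UC_{even}$ and $BC_{odd}\ssquare UC_{odd}$. For the two cells with a $BC_{odd}$ factor, the lower bound $\geq 5$ is exactly Lemma~\ref{lem:ProdCyclePas4} (take $q$ to be the order of the unbalanced factor and $2p+1$ the order of the balanced odd factor). For the cells whose value should be $4$, the layer bound already gives $\geq 4$, and I would produce a matching homomorphism into a fixed order-$4$ signed target by a column-by-column extension argument in the spirit of the grid proof of Theorem~\ref{th:grid}, exploiting that all squares of a product are balanced. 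The matching upper bounds in the $BC_{odd}$ cells are obtained by an explicit homomorphism to a small signed graph (using Corollary~\ref{cor:UpperBoundCartesianProduct} as a first crude estimate), refined until it meets the value forced by Lemma~\ref{lem:ProdCyclePas4}.

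I expect the main obstacle to be precisely the lower bounds of value $\geq 5$, which is why Lemma~\ref{lem:ProdCyclePas4} is isolated as the technical core of the section and its proof deferred. The secondary difficulty is constructing the explicit homomorphisms realising the upper bounds in the mixed cases, where one must track signs around the balanced squares of the product while extending a partial coloring, and dispose of the finitely many small base cases (some perhaps by computer) that the inductive constructions do not reach.
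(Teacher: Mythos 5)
Your reductions for the balanced--balanced cells and the antibalanced cells ($BC_{even}\ssquare UC_{odd}$ and $UC_{odd}\ssquare UC_{odd}$) are correct and in fact slightly slicker than the paper's treatment of those cases, and your use of the layer lower bound together with Theorem~\ref{thm:cyclespasproduit} is exactly the paper's. But there is a genuine gap: the cell $UC_{even}\ssquare UC_{odd}$, with value $5$, is never resolved. You list it among the hard cells, but your lower bound $\geq 5$ is invoked only ``for the two cells with a $BC_{odd}$ factor,'' and Lemma~\ref{lem:ProdCyclePas4} as stated concerns $UC_q\ssquare BC_{2p+1}$ -- it does not apply when neither factor is a balanced odd cycle. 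The layer bound leaves you stuck at $4$ for this cell. The paper closes it with an extra observation you omit: negating every edge sign is an involution that preserves the chromatic number, exchanges $BC_{odd}$ with $UC_{odd}$, and fixes the type $UC_{even}$, whence $\chi_s(UC_q\ssquare UC_{2p+1}) = \chi_s(UC_q\ssquare BC_{2p+1}) > 4$. Your proposal also gives no upper bound $\leq 5$ for this cell (your explicit-homomorphism clause again covers only ``the $BC_{odd}$ cells''), though the same symmetry, or a direct coloring, would supply it.

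A secondary problem is your plan for the upper bounds of the $4$-valued cells: a column-by-column greedy extension in the spirit of Theorem~\ref{th:grid} does not work verbatim on a product of two cycles, because both coordinates wrap around -- the last column must be consistent with the first, and the grid argument has no mechanism to guarantee this closure (this is precisely why the toroidal case needs the heavy machinery of Lemma~\ref{lem:ProdCyclePas4} for its lower bounds). The paper instead reduces each factor homomorphically to its canonical small representative ($BC_{even}\sto K_2$, $BC_{odd}\sto BC_3$, $UC_{even}\sto UC_4$, $UC_{odd}\sto UC_3$) via Theorem~\ref{thm:cartesianHomCompatibility}, and then exhibits explicit finite colorings of the resulting products such as $UC_4\ssquare UC_4$ and $BC_3\ssquare UC_4$. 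You state the reduction to $K_2^+$ for $BC_{even}$ but never the analogous reductions $UC_{2q}\sto UC_4$ and $UC_{2q+1}\sto UC_3$, which are what make all the remaining upper bounds finite checks; adding these, together with the sign-negation symmetry above, would repair the proposal.
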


\begin{table}[H]
\centering
\begin{tabular}{|l|l|l|l|l|}
 \hline
 $(C_1,\sigma_1)\ssquare (C_2,\sigma_2)$ & $BC_{even}$ & $BC_{odd}$ & $UC_{even}$ & $UC_{odd}$ \\
 \hline
 $BC_{even}$ & 2 & 3 & 4 & 3 \\
 \hline
 $BC_{odd}$ & 3 & 3 & 5 & 5 \\
 \hline
 $UC_{even}$ & 4 & 5 & 4 & 5 \\
 \hline
 $UC_{odd}$ & 3 & 5 & 5 & 3 \\
 \hline
\end{tabular}
\caption{The chromatic number of Cartesian products of signed cycles.}
\label{fig:table_chi_cycle_prod}
\end{table}

%The proof of Theorem \ref{thm:cycles} is straightforward with Lemma \ref{lem:ProdCyclePas4}. Indeed we get the upper bounds by finding a suitable coloring and the lower bound are obvious except in the setting of Lemma \ref{lem:ProdCyclePas4}.

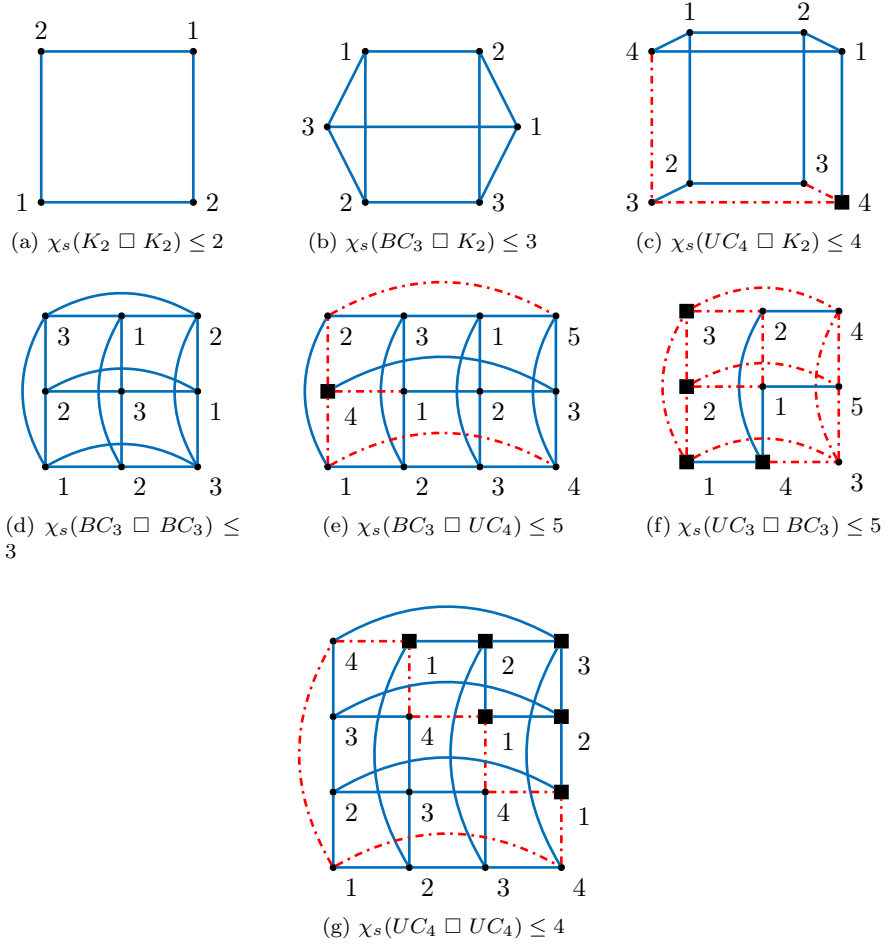
\begin{figure}[!t]
\centering
\subfloat[$\chi_s(K_2\ssquare K_2) \leq 2$]{
\begin{tikzpicture}
\tikzstylemacro{}

\node[n, label=left:$1$] (C) at (0,-1) {};
\node[n, label=above:$2$] (A) at  (0,1) {};
\node[n, label=right:$2$] (D) at (2,-1) {};
\node[n, label=above:$1$] (B) at (2,1) {};
\draw[g] (A)--(B);
\draw[g] (A)--(C)--(D)--(B);
\end{tikzpicture}
}\hspace{.5cm}
\subfloat[$\chi_s(BC_3\ssquare K_2) \leq 3$]{
\begin{tikzpicture}
\tikzstylemacro{}

\node[n, label=left:$1$] (A1) at (0,1) {};
\node[n, label=left:$2$] (A2) at  (0,-1) {};
\node[n, label=left:$3$] (A3) at (-.5,0) {};
\node[n, label=right:$2$] (B1) at (1.5,1) {};
\node[n, label=right:$3$] (B2) at  (1.5,-1) {};
\node[n, label=right:$1$] (B3) at (2,0) {};
\draw[g] (A1)--(A2) -- (A3) -- (A1);
\draw[g] (B1)--(B2) -- (B3) -- (B1);
\foreach \i in {1,2,3}{
	\draw[g] (A\i)--(B\i);
}
\end{tikzpicture}
}\hspace{.5cm}
\subfloat[$\chi_s(UC_4\ssquare K_2) \leq 4$]{
\begin{tikzpicture}
\tikzstylemacro{}

\node[n, label=above:$1$] (A1) at (0,1) {};
\node[n, label=above left:$2$] (A2) at  (0,-1) {};
\node[n, label=left:$3$] (A3) at (-.5,-1.25) {};
\node[n, label=left:$4$] (A4) at (-.5,.75) {};
\node[n, label=above:$2$] (B1) at (1.5,1) {};
\node[n, label=above right:$3$] (B2) at  (1.5,-1) {};
\node[ns, label=right:$4$] (B3) at (2,-1.25) {};
\node[n, label=right:$1$] (B4) at (2,.75) {};
\draw[g] (A4) --(A1)--(A2) -- (A3); 
\draw[r] (A4) -- (A3);
\draw[g] (B4) --(B1)-- (B2);
\draw[r] (B2) -- (B3); 
\draw[g] (B4) -- (B3);
\foreach \i in {1,2,4}{
	\draw[g] (A\i)--(B\i);
}
\draw[r] (A3) -- (B3);
\end{tikzpicture}
}
%
%\subfloat[$\chi_s(UC_3\ssquare K_2) \leq 3$]{
%\begin{tikzpicture}
%\tikzstylemacro{}
%
%\node[n, label=left:$1$] (A1) at (0,1) {};
% \node[n, label=left:$2$] (A2) at  (0,-1) {};
% \node[n, label=left:$3$] (A3) at (-.5,0) {};
% \node[n, label=right:$2$] (B1) at (1.5,1) {};
% \node[n, label=right:$3$] (B2) at  (1.5,-1) {};
% \node[n, label=right:$1$] (B3) at (2,0) {};
% \draw[r] (A1)--(A2) -- (A3) -- (A1);
% \draw[r] (B1)--(B2) -- (B3) -- (B1);
%\foreach \i in {1,2,3}{
%	\draw[r] (A\i)--(B\i);
%}
%\end{tikzpicture}
%}
\\
\subfloat[$\chi_s(BC_3\ssquare BC_3) \leq 3$]{
\begin{tikzpicture}
\tikzstylemacro{}

\node[n, label=below right:$1$] (11) at (1,1) {};
\node[n, label=below right:$2$] (12) at (1,2) {};
\node[n, label=below right:$3$] (13) at (1,3) {};
\node[n, label=below right:$2$] (21) at (2,1) {};
\node[n, label=below right:$3$] (22) at (2,2) {};
\node[n, label=below right:$1$] (23) at (2,3) {};
\node[n, label=below right:$3$] (31) at (3,1) {};
\node[n, label=below right:$1$] (32) at (3,2) {};
\node[n, label=below right:$2$] (33) at (3,3) {};

\foreach \i in {1,2,3}{
	\draw[g] (\i1) -- (\i2) -- (\i3);
	\draw[g] (\i1) edge[bend left] (\i3);
}

\foreach \j in {1,2,3}{
	\draw[g] (1\j) -- (2\j) -- (3\j);
	\draw[g] (1\j) edge[bend left] (3\j);
}
\end{tikzpicture}
}\hspace{0.5cm}
\subfloat[$\chi_s(BC_3\ssquare UC_4) \leq 5$]{
\begin{tikzpicture}
\tikzstylemacro{}

\node[n, label=below right:$1$] (11) at (1,1) {};
\node[ns, label=below right:$4$] (12) at (1,2) {};
\node[n, label=below right:$2$] (13) at (1,3) {};
\node[n, label=below right:$2$] (21) at (2,1) {};
\node[n, label=below right:$1$] (22) at (2,2) {};
\node[n, label=below right:$3$] (23) at (2,3) {};
\node[n, label=below right:$3$] (31) at (3,1) {};
\node[n, label=below right:$2$] (32) at (3,2) {};
\node[n, label=below right:$1$] (33) at (3,3) {};
\node[n, label=below right:$4$] (41) at (4,1) {};
\node[n, label=below right:$3$] (42) at (4,2) {};
\node[n, label=below right:$5$] (43) at (4,3) {};

\foreach \i in {2,3,4}{
	\draw[g] (\i1) -- (\i2) -- (\i3);
	\draw[g] (\i1) edge[bend left] (\i3);
}
\draw[r] (11) -- (12) -- (13);
\draw[g] (11) edge[bend left] (13);

\foreach \j in {1,3}{
	\draw[g] (1\j) -- (2\j) -- (3\j) -- (4\j);
	\draw[r] (1\j) edge[bend left] (4\j);
}
\draw[r] (12) -- (22);
\draw[g] (22) -- (32) -- (42);
\draw[g] (12) edge[bend left] (42);
\end{tikzpicture}
}\hspace{0.5cm}
\subfloat[$\chi_s(UC_3\ssquare BC_3) \leq 5$]{
\begin{tikzpicture}
\tikzstylemacro{}

\node[ns, label=below right:$1$] (11) at (1,1) {};
\node[ns, label=below right:$2$] (12) at (1,2) {};
\node[ns, label=below right:$3$] (13) at (1,3) {};
\node[ns, label=below right:$4$] (21) at (2,1) {};
\node[n, label=below right:$1$] (22) at (2,2) {};
\node[n, label=below right:$2$] (23) at (2,3) {};
\node[n, label=below right:$3$] (31) at (3,1) {};
\node[n, label=below right:$5$] (32) at (3,2) {};
\node[n, label=below right:$4$] (33) at (3,3) {};

\foreach \i in {1,3}{
	\draw[r] (\i1) -- (\i2) -- (\i3);
	\draw[r] (\i1) edge[bend left] (\i3);
}

\draw[g] (21) -- (22);
\draw[r] (22) -- (23);
\draw[g] (21) edge[bend left] (23);

\foreach \j in {2,3}{
	\draw[r] (1\j) -- (2\j);
	\draw[g] (2\j) -- (3\j);
	\draw[r] (1\j) edge[bend left] (3\j);
}
\draw[g] (11) -- (21);
\draw[r] (21) -- (31);
\draw[r] (11) edge[bend left] (31);
\end{tikzpicture}
}\\
\subfloat[$\chi_s(UC_4\ssquare UC_4) \leq 4$]{
\begin{tikzpicture}
\tikzstylemacro{}

\node[n, label=below right:$1$] (11) at (1,1) {};
\node[n, label=below right:$2$] (12) at (1,2) {};
\node[n, label=below right:$3$] (13) at (1,3) {};
\node[n, label=below right:$4$] (14) at (1,4) {};
\node[n, label=below right:$2$] (21) at (2,1) {};
\node[n, label=below right:$3$] (22) at (2,2) {};
\node[n, label=below right:$4$] (23) at (2,3) {};
\node[ns, label=below right:$1$] (24) at (2,4) {};
\node[n, label=below right:$3$] (31) at (3,1) {};
\node[n, label=below right:$4$] (32) at (3,2) {};
\node[ns, label=below right:$1$] (33) at (3,3) {};
\node[ns, label=below right:$2$] (34) at (3,4) {};
\node[n, label=below right:$4$] (41) at (4,1) {};
\node[ns, label=below right:$1$] (42) at (4,2) {};
\node[ns, label=below right:$2$] (43) at (4,3) {};
\node[ns, label=below right:$3$] (44) at (4,4) {};

\draw[g] (11) -- (12) -- (13) -- (14);
\draw[r] (11) edge[bend left] (14);
\draw[g] (21) -- (22) -- (23);
\draw[r] (23) -- (24);
\draw[g] (21) edge[bend left] (24);
\draw[g] (31) -- (32);
\draw[r] (32) -- (33);
\draw[g] (33) -- (34);
\draw[g] (31) edge[bend left] (34);
\draw[r] (41) -- (42);
\draw[g] (42) -- (43) -- (44);
\draw[g] (41) edge[bend left] (44);

\draw[g] (11) -- (21) -- (31) -- (41);
\draw[r] (11) edge[bend left] (41);
\draw[g] (12) -- (22) -- (32);
\draw[r] (32) -- (42);
\draw[g] (12) edge[bend left] (42);
\draw[g] (13) -- (23);
\draw[r] (23) -- (33);
\draw[g] (33) -- (43);
\draw[g] (13) edge[bend left] (43);
\draw[r] (14) -- (24);
\draw[g] (24) -- (34) -- (44);
\draw[g] (14) edge[bend left] (44);

\end{tikzpicture}
}
\caption{Coloring of Cartesian products of signed cycles. The large squared vertices have been switched in the Cartesian product. }
\label{fig:CasesProdCycles}
\end{figure}

\begin{proof}
If $G$ is a cycle of type $BC_{even}$ (resp. $BC_{odd}$, $UC_{even}$, $UC_{odd}$), then $G \sto BC_2 = K_2$ (resp. $BC_{3}$, $UC_{4}$, $UC_{3}$). By computing the chromatic numbers of the Cartesian products of  $(G,\sigma)$ and $(H,\pi)$ when they belong to $\set{K_2, BC_3, UC_4, UC_3}$, we get an upper bound for each of the Cartesian product type equal to the corresponding value in the table. These cases, up to symmetry between the sets of positives and negatives edges, are represented in Figure~\ref{fig:CasesProdCycles}. Note that to color some graphs, we switched some vertices.

For the lower bound, note that $\chi_s((C_1,\sigma_1)\ssquare (C_2,\sigma_2)) \geq \max(\chi_s(C_1,\sigma_1),\chi_s(C_2,\sigma_2))$. Theorem~\ref{thm:cyclespasproduit} concludes for the cases where the chromatic number is at most $4$. Lemma~\ref{lem:ProdCyclePas4} allows us to conclude for the remaining cases as $\chi_s(UC_q\ssquare BC_{2p+1}) = \chi_s(UC_q\ssquare UC_{2p+1})$ by symmetry between the two edge types.
\end{proof}

%The proof is by contradiction. If $\chi_s(UC_q\ssquare BC_{2p+1} ) > 4$, then $(P,\rho) = UC_q\ssquare BC_{2p+1} \sto (T,\theta)$ where $(T,\theta)$ is a complete graph of order $4$. By looking at which $(T,\theta)$ are suitable targets, we get that $(T,\theta)$ must be the graph $K_4$ where only one edge is negative. $(P,\rho)$ must be switched into another signed graph $(P,\rho')$ which maps to $(T,\theta)$ without switches. By counting the negative edges in $(P,\rho')$ in different ways we get our contradiction. 
%
%In particular, we use topological arguments on $(P,\rho')$ seen as a toroidal grid. We look at connected components of the negative edges and create a set of closed walks ``surrounding'' them. These closed walks are bipartite, this fact gives us constraints on how they can wrap around the torus. These constraints allows us to count the negative edges one way.
%The other way to count is directly given by the definition of the product.

One further question would be to compute the chromatic number of the Cartesian product of an arbitrary number of signed cycles. Note that $BC_3 \ssquare BC_3 \sto BC_3$, and that the same holds for $K_2$, $UC_3$ and $UC_4$. This implies that, for these four graphs, it is only interesting to look at Cartesian products of the form $K_2^a \ssquare BC_3^b \ssquare UC_4^c \ssquare UC_3^d$ where $a,b,c,d \in \set{0,1}$. Moreover, we can suppose that $a =0$ if one of $b$, $c$ or $d$ is non zero. Thus the only interesting case left to solve is $\chi_s(BC_3 \ssquare UC_3 \ssquare UC_4)$.

To extend this to any length, using the same argument as in Theorem \ref{thm:cycles}, would require that we obtain a lower bound for $\chi_s(BC_{2p+1} \ssquare UC_{2q+1} \ssquare UC_{2r})$ equal to $\chi_s(BC_3 \ssquare UC_3 \ssquare UC_4)$.

%One further question would be to compute the chromatic number of an arbitrary number of signed cycles. An interesting remark is that $BC_3 \ssquare BC_3 \sto BC_3$. This is also true for $K_2$, $UC_3$ and $UC_4$. Thus, assuming that the lengths of the cycles do not impact the result outside of their parity as in the previous case, the only interesting case for upper bounds would be to compute $\chi_s(BC_3 \ssquare UC_3 \ssquare UC_4)$.

\subsection{Definitions and preliminary results} \label{sec:cycles:strartproof}

We start by recalling some more definitions.

%The {\em neighborhood} $N_G(u)$ of a vertex $u$ in the graph $G$ is the set of vertices adjacent to $u$ in $G$. When the context is clear, we will note $N(u)$ for the neighborhood of $u$ in $G$.
%
A graph $G$ is {\em bipartite} if we can partition $V(G)$ into $A \uplus B$ (where $\uplus$ is the disjoint union) such that every edge $xy$ of $G$ has one endpoint in $A$ and one endpoint in $B$.

For a group $(H,+,0)$, noted simply $H$, and a subgroup $Q$ of $H$, the {\em quotient} $\faktor{H}{Q}$ is the group $(\setcond{ \overline{x} }{ x \in H},+,\overline{0})$ where $\overline{x} = \setcond{y\in H}{y = x +q, q\in Q}$ is the {\em equivalence class} of $x$ and where the $+$ operation verifies $\overline{x} + \overline{y} = \overline{x+y}$.
If $G$ is a graph with vertex set a group $H$ and $Q$ is a subgroup of $H$, then the {\em quotient graph} $\faktor{G}{Q}$ over the vertices $\faktor{H}{Q}$ is defined by identifying the vertices in the same equivalence class. Similarly, if  $W = s_0, \dots, s_n$ is a walk on $G$, then the {\em quotient walk} $W'$ on  $\faktor{G}{Q}$ is the sequence $\overline{s_0}, \dots, \overline{s_n}$.

%Recall that the chromatic number of a graph $(G,\sigma)$ is the smallest order of a signed graph to which it admits a homomorphism to.
%Alternatively, a {\em $k$-(vertex-)coloring} of a signed graph is a proper $k$-(vertex-)coloring of one of its signatures such that all edges between color $i$ and $j$ have the same sign, and $\chi_s(G)$ is the smallest $k$ for which a $k$-coloring of $G$ exists.
%In particular if $(G,\sigma) \sto (H,\sigma)$, one can obtain a $k$-coloring of some $(G,\sigma')$ with $\sigma' \equiv \sigma$ by coloring the vertices of $(G,\sigma')$ by their images through the homomorphism. %We know that such a $\sigma'$ exists by taking the alternative definition of a signed homomorphism (by switching).

\medskip

Now, we count the number of signed complete graphs on four vertices. This result will be useful in the proof of Lemma~\ref{lem:ProdCyclePas4}.

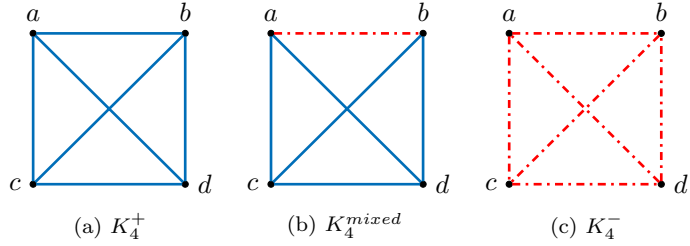
\begin{figure}[t]
\centering
\subfloat[$K_4^+$]{
\begin{tikzpicture}
\tikzstylemacro{}

\node[n, label=left:$c$] (C) at (0,-1) {};
\node[n, label=above:$a$] (A) at  (0,1) {};
\node[n, label=right:$d$] (D) at (2,-1) {};
\node[n, label=above:$b$] (B) at (2,1) {};
\draw[g] (A)--(B);
\draw[g] (D)--(A)--(C)--(D)--(B)--(C);
\end{tikzpicture}
}
\subfloat[$\KIVMixed$ \label{fig:K4Mixed}]{
\begin{tikzpicture}
\tikzstylemacro{}

\node[n, label=left:$c$] (C) at (0,-1) {};
\node[n, label=above:$a$] (A) at  (0,1) {};
\node[n, label=right:$d$] (D) at (2,-1) {};
\node[n, label=above:$b$] (B) at (2,1) {};
\draw[r] (A)--(B);
\draw[g] (D)--(A)--(C)--(D)--(B)--(C);
\end{tikzpicture}
}
\subfloat[$K_4^-$]{
\begin{tikzpicture}
\tikzstylemacro{}

\node[n, label=left:$c$] (C) at (0,-1) {};
\node[n, label=above:$a$] (A) at  (0,1) {};
\node[n, label=right:$d$] (D) at (2,-1) {};
\node[n, label=above:$b$] (B) at (2,1) {};
\draw[r] (A)--(B);
\draw[r] (D)--(A)--(C)--(D)--(B)--(C);
\end{tikzpicture}
}
\caption{The three complete signed graphs of order $4$}
\label{fig:3completeK4}
\end{figure}

\begin{theorem}\label{thm:SignedCompleteOrder4}
There are three complete signed graphs of order 4 (see Figure~\ref{fig:3completeK4}). They are the signed graph $K_4^+ = (K_4,\varnothing)$ with only positive edges, the signed graph $K_4^- = (K_4, E(K_4))$ with only negative edges and the signed graph $\KIVMixed = (K_4, \{ab\})$ where $a$ and $b$ are two vertices of $K_4$.
\end{theorem}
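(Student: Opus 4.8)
The plan is to classify all signatures of $K_4$ up to the combined relation of switching-equivalence and isomorphism, using the fact recalled in the excerpt (via Zaslavsky's Observation) that two signatures on the same underlying graph are equivalent exactly when they have the same set of balanced cycles. Since the four triangles of $K_4$ span its cycle space, which has dimension $m-n+1 = 6-4+1 = 3$, and the sign of a cycle is multiplicative under symmetric difference, the switching class of a signature is completely determined by the balance of these four triangles. So the whole classification reduces to understanding which subsets of the four triangles can be the unbalanced ones.

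First I would record a parity constraint: every edge of $K_4$ lies in exactly two of the four triangles, so the product of the signs of all four triangles equals $\prod_e \sigma(e)^2 = +1$. Hence the number of unbalanced triangles is even, that is $0$, $2$, or $4$. I would then exhibit a representative for each value: $K_4^+$ has no unbalanced triangle (it is balanced); $\KIVMixed$, with its single negative edge $ab$, has exactly the two triangles through $ab$ unbalanced; and $K_4^-$ has all four triangles unbalanced, since each carries three negative edges. As the three values $0,2,4$ are distinct switching-invariants, these three signed graphs are pairwise non-equivalent.

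It then remains to prove completeness, namely that these are the only classes, and this is the main obstacle: the number of unbalanced triangles must be shown to be a complete invariant, not merely a separating one. Here I would count switching classes directly. The balance map is a homomorphism from the cycle space $(\ZZ/2\ZZ)^3$ to $\{+1,-1\}$, so there are exactly $2^3 = 8$ switching classes. Parametrising each by the signs of a triangle basis $T_1,T_2,T_3$, with the fourth triangle satisfying $\sigma(T_4)=\sigma(T_1)\sigma(T_2)\sigma(T_3)$, and evaluating the number of unbalanced triangles on each of the $8$ homomorphisms, gives the distribution: one class with value $0$, six classes with value $2$, and one class with value $4$. The unique value-$0$ and value-$4$ classes are $K_4^+$ and $K_4^-$. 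The six value-$2$ classes are each determined by the unique edge shared by its two unbalanced triangles, and since the automorphism group $S_4$ of $K_4$ acts transitively on the six edges, these classes form a single isomorphism class, represented by $\KIVMixed$. The $1+6+1$ switching classes therefore collapse to exactly $3$ classes under isomorphism, which establishes the theorem.
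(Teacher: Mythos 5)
Your proof is correct, but it takes a genuinely different route from the paper's. The paper argues by direct normalization: pick a vertex $u$, switch its neighbors as needed so that all three edges at $u$ become positive, then case on the number of negative edges in the remaining triangle $xyz$ — zero gives $K_4^+$; three gives $K_4^-$ after switching $u$; one gives $\KIVMixed$ directly; and two gives $\KIVMixed$ after switching the vertex incident to both negative edges. That argument is four lines, fully constructive (it exhibits the explicit switching), and uses nothing beyond the definition of switching; on the other hand, as written it only proves completeness — every signed $K_4$ reduces to one of the three forms — and leaves the pairwise inequivalence of the three representatives implicit. Your argument proves both halves: the number of unbalanced triangles is an invariant under switching (by Zaslavsky's observation) and under isomorphism, which separates the three classes, and the count $1+6+1$ of switching classes together with the transitive action of $S_4$ on the six edges gives completeness. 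It is also the more structural proof: it makes visible that switching classes of signatures on $K_4$ correspond to homomorphisms from the cycle space $(\ZZ/2\ZZ)^{3}$ to $\set{+1,-1}$, so that there are $2^{m-n+1}=8$ of them, and that the triangle-balance vector is a complete invariant — facts that scale to larger graphs where a hands-on normalization would become unwieldy. One step you leave tacit is that \emph{every} sign pattern on the basis triangles $T_1,T_2,T_3$ is actually realized by some signature; this is immediate (fix a positive spanning star at one vertex and choose the three non-tree edge signs freely, each non-tree edge lying in exactly one fundamental triangle), and in fact the theorem does not even require it: once the parity constraint forces $0$, $2$ or $4$ unbalanced triangles, each such balance function is realized by $K_4^+$, by $(K_4,\set{e})$ for the unique edge $e$ shared by the two unbalanced triangles, or by $K_4^-$, and Zaslavsky's criterion then yields the equivalence. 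Either way, your classification stands.
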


\begin{proof}
Let $(K_4,\sigma)$ be a complete signed graph on four vertices. Arbitrarily choose $u$ to be one of the vertices of $(K_4,\sigma)$. By  switching the neighbors of $u$ if needed, we can suppose that $u$ is only incident to positive edges. Let $x,y,z$ be the other three vertices of $(K_4,\sigma)$. If the triangle $xyz$ is all positive, then $(K_4,\sigma) = K_ 4^+$, if the triangle is all negative, then by switching $u$, we get $(K_4,\sigma) = K_ 4^-$. If the triangle has only one  negative edge, then $(K_4,\sigma) = K_4^{mixed}$. Otherwise, the triangle has two negative edges, by switching the vertex with the two negative edges, we get $(K_4,\sigma) = K_4^{mixed}$.
\end{proof}

\subsection{Beginning of the proof of Lemma~\ref{lem:ProdCyclePas4}}

%\note{Introduction à faire.}
Our goal is to prove Lemma~\ref{lem:ProdCyclePas4}. 
%In this section, we suppose, by absurd, that $\chi_s(UC_q\ssquare BC_{2p+1} ) \leq 4$.
% For concision, we call $T$ the graph $UC_q\ssquare BC_{2p+1}$.
For that, take some integers $p$ and $q$, let $(P,\pi) =  UC_q\ssquare BC_{2p+1}$, and suppose that, by absurd, $\chi_s(P,\pi ) \leq 4$.

\begin{claim}
\label{claim:toK4Mixed}
We have $ (P,\pi) \sto \KIVMixed$.
%$$ T \sto \KIVMixed = \KIVMixedPlot $$
\end{claim}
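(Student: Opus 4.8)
The plan is to combine the classification of order-$4$ complete signed graphs (Theorem~\ref{thm:SignedCompleteOrder4}) with a balance argument on two well-chosen families of layers of the product, so that the only admissible target is $\KIVMixed$. First I would reduce the target to a complete signed graph on exactly four vertices. Since we assume $\chi_s(P,\pi)\leq 4$, there is a homomorphism of $(P,\pi)$ to some signed graph $(H,\pi_H)$ of order at most $4$. Adding isolated vertices and extra edges of arbitrary sign to $(H,\pi_H)$ does not alter the image of any closed walk of $(P,\pi)$ (the images use only edges already present), so it preserves both the underlying graph homomorphism and the balance conditions defining a homomorphism of signed graphs. Hence I may assume $(H,\pi_H)$ is complete of order exactly $4$, and by Theorem~\ref{thm:SignedCompleteOrder4} we have $(H,\pi_H)\in\set{K_4^+,\ K_4^-,\ \KIVMixed}$. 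It then suffices to rule out the two ``pure'' graphs.

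To exclude $K_4^+$, I would use that $(P,\pi)$ contains an unbalanced cycle, namely any $UC_q$-layer, since the layers of a Cartesian product of signed graphs are isomorphic copies of the factors. A homomorphism of signed graphs sends unbalanced closed walks to unbalanced closed walks, whereas in $K_4^+$ every closed walk is balanced because all edges are positive. Therefore $(P,\pi)\not\sto K_4^+$.

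To exclude $K_4^-$, I would instead use a $BC_{2p+1}$-layer, which is a balanced cycle of odd length $2p+1$. Its image under a homomorphism to $K_4^-$ would be a balanced closed walk of length $2p+1$; but every edge of $K_4^-$ is negative, so a closed walk of odd length $2p+1$ has sign $(-1)^{2p+1}=-1$ and is unbalanced, a contradiction. Hence $(P,\pi)\not\sto K_4^-$, and the only remaining possibility is $(P,\pi)\sto \KIVMixed$, as claimed.

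The argument is short, and the only point that genuinely requires care is the reduction to a complete target of order $4$: one must verify that enlarging the target (adding vertices and arbitrarily signed edges) preserves the signed-homomorphism property, which holds precisely because the images of closed walks, and thus their signs, are untouched. Everything else is a direct parity computation on the balances of the $UC_q$- and $BC_{2p+1}$-layers.
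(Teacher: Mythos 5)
Your proof is correct and follows essentially the same route as the paper: reduce to a complete target of order four, invoke Theorem~\ref{thm:SignedCompleteOrder4}, and rule out $K_4^+$ and $K_4^-$ using the $UC_q$- and $BC_{2p+1}$-layers. The only cosmetic difference is that the paper phrases the exclusion via equivalent signatures (every signature equivalent to $\pi$ has both a positive and a negative edge, since every signature of $UC_q$ has a negative edge and every signature of $BC_{2p+1}$ a positive one), whereas you argue directly with the balance and parity of the image closed walks --- the two formulations are equivalent, and your explicit justification of enlarging the target to a complete graph fills in a step the paper leaves implicit.
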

\begin{proof}

Since $\chi_s(P,\pi) \leq 4$, $(P,\pi) \sto (K_4,\rho)$ for some signature $\rho$ of $K_4$. %Up to adding some vertices and edges to $(H,\rho)$ we can suppose that $(H,\rho)$ is a complete graph of order $4$. %If $(H,\rho)$ is $(K_4,\varnothing)$ or $(K_4,E(K_4))$, then $(P,\pi)$ can be switched to have only positive or negative edges as we can suppose that the switches where only done on $(P,\pi)$ by a result from the associativity theorem of \cite{Naserasr2014}. 

Every equivalent signature of $BC_{2p+1}$ has at least one positive edge. Similarly, every equivalent signature of $UC_q$ has at least one negative edge.
Thus, in every equivalent signature of $(P,\pi)$, there is at least one positive edge and one negative edge. So $(H,\rho)$ cannot be $(K_4,\varnothing)$ nor $(K_4,E(K_4))$. % as this would imply that we can switch $(P,\pi)$ to have only one type of edge.
By Theorem~\ref{thm:SignedCompleteOrder4}, since there are only three complete signed graphs of order $4$, $(H,\rho)$ is $\KIVMixed$.
\end{proof}

From now on, we suppose that we fixed a homomorphism $\varphi$ of $(P,\pi)$ to $\KIVMixed$. We label the vertices of $\KIVMixed$ as in Figure~\ref{fig:K4Mixed}. Therefore, there exists a signed graph $(P,\pi') \equiv (P,\pi)$ for which $v \mapsto \varphi(v)$ is a coloring.
%Therefore we can associate a color with the vertices of $(P,\pi')$ among $a, b, c$ and $d$ according to their image through the homomorphism where  $(P,\pi') \equiv (P,\pi)$.
%\dimitri{Je peux remplacer la dernière phrase par: Therefore, there is a signed graph $(P,\pi') \equiv (P,\pi)$ for which $v \mapsto \varphi(v)$ is a coloring.}
%\begin{figure}[h]
%\centering
%\KIVMixedPlot
%\caption{The graph $\KIVMixed$ with the label of each of its vertices.}
%\label{fig:K4Mixed}
%\end{figure}

The proof of Lemma~\ref{lem:ProdCyclePas4} is divided into four parts. First, by considering the graph $P$ as a toroidal grid, we define what we mean for a walk to make a ``turn'' around the torus in subsection~\ref{sec:turns}. 
Then, by considering the coloring of $(P,\pi')$ corresponding to $\varphi$ and the connected components of $(P,\pi')$ induced by colors $a$ and $b$, we link the number of ``crossings'' of some boundaries of the components with a vertical (or horizontal) cycle and the number of $ab$ edges of this cycle in subsection~\ref{sec:boundary}. In subsection~\ref{sec:crossings-turns}, we connect this number of `crossings'' to the number of turns and we conclude the proof in subsection~\ref{sec:proof-ending}.

%\dimitri{Je ne peux pas prendre $\pi' = \pi$ wlog. La raison c'est que $\pi$ est une signature très symmétrique. Alors que pour colorier, on peut switch n'importe où de tel manière que $\pi'$ ne ressemble pas à une signature de produit. }

\subsection{Number of turns in $P$}
\label{sec:turns}

The goal of this subsection is twofold. 
First, we want to establish another definition of $P$ as a toroidal grid \ie the quotient of some infinite grid. Secondly, we want to define the quantities $\tau_x(W)$ and $\tau_y(W)$ for each closed walk $W$ of $T$. They represent the number of turns in each direction of the torus made by the closed walk $W$.

\begin{figure}[!t]
\centering
\begin{tikzpicture}[scale=0.8]
\tikzstylemacro

\foreach \i in {0,1,2,3}{
	\foreach \j in {0,1}{
		\foreach \y in {0,1,2,3}{
			\foreach \x in {0,1,2}{
				\pgfmathtruncatemacro\result{\x  + 3 * \y}
				\pgfmathtruncatemacro\k{\x + 3 * \i}
				\pgfmathtruncatemacro\l{\y + 4 * \j}
				\node[v] (x\k y\l) at (7/8*\k,7/8*\l) {\result};
			}	
		}
		
		\coordinate (ctl\i\j) at (7/8*3*\i - 7/16,7/8*4*\j + 7/16 +7/8*3) {};
		\coordinate (ctr\i\j) at (7/8*3*\i + 7/16 + 7/8*2,7/8*4*\j + 7/16+7/8*3) {};
		\coordinate (cbl\i\j) at (7/8*3*\i - 7/16,7/8*4*\j - 7/16) {};
		\coordinate (cbr\i\j) at (7/8*3*\i + 7/16 + 7/8*2,7/8*4*\j - 7/16) {};
		
		\draw[b] (ctl\i\j) -- (ctr\i\j) -- (cbr\i\j) -- (cbl\i\j) -- (ctl\i\j);
	}
}

\foreach \k in {-1,12}{
	\foreach \l in {0,...,7}{
		\coordinate (x\k y\l) at (7/8*\k,7/8*\l) {};
	}
}
\foreach \l in {-1,8}{
	\foreach \k in {0,...,11}{
		\coordinate (x\k y\l) at (7/8*\k,7/8*\l) {};
	}
}

\foreach \i in {0,1,2,3}{
	\foreach \j in {0,1}{
		\foreach \y in {0,1,2,3}{
			\foreach \x in {0,1,2}{
				\pgfmathtruncatemacro\k{\x + 3 * \i}
				\pgfmathtruncatemacro\l{\y + 4 * \j}
				\pgfmathtruncatemacro\kplusun{\k + 1}
				\pgfmathtruncatemacro\kmoinsun{\k - 1}
				\pgfmathtruncatemacro\lplusun{\l +1}
				\pgfmathtruncatemacro\lmoinsun{\l -1}
				\draw (x\k y\l) -- (x\kplusun y\l);
				\draw (x\k y\l) -- (x\kmoinsun y\l);
				\draw (x\k y\l) -- (x\k y\lplusun);
				\draw (x\k y\l) -- (x\k y\lmoinsun);
			}	
		}
	}
}

\end{tikzpicture}
\caption{A subgraph of the graph $G$. Vertices with the same label are identified in $P$. Here $q=4$ and $2p+1=3$.}
\label{fig:Z2}
\end{figure}
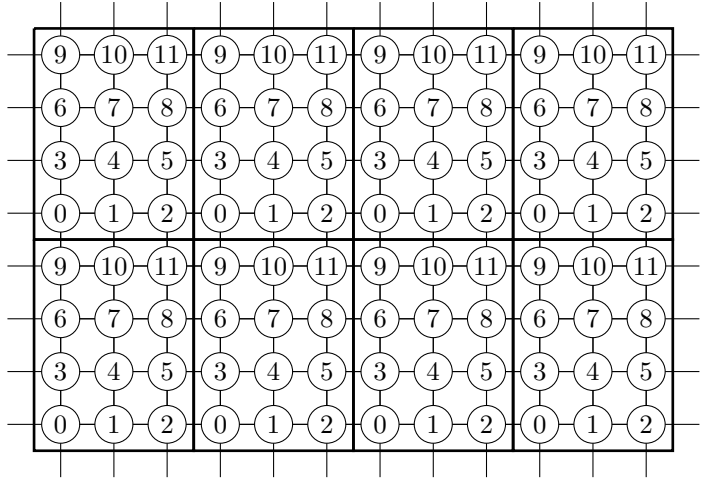

\begin{definition}
We can associate with $\ZZ^2$ an infinite graph ${G^\infty}$ whose vertex set $V({G^\infty})$ is the set $\setcond{v_{x,y}}{(x,y) \in \ZZ^2}$ and whose edge set is the set of pairs $\{v_{x,y}v_{x',y'}\}$, where either $x = x'$ and $\abs{y -y'} = 1$, or  $y = y'$ and $\abs{x -x'} = 1$. 
We can then redefine the graph $P$ as the quotient $\faktor{{G^\infty}}{Q}$ where $Q = \ZZ_{2p+1} \times \ZZ_{q}$. In other words take the graph ${G^\infty}$ where we identify each vertex $v_{x,y}$ with $v_{x',y'}$ when $x-x'$ is a multiple of  $2p+1$ and $y-y'$ is a multiple of $q$. The graph ${G^\infty}$ can be seen as an unfolding of the toroidal grid $P$. Figure~\ref{fig:Z2} represents a subgraph of ${G^\infty}$ when $q=4$ and $2p+1= 3$.
%The signed graph $(P,\pi)$ still has a signature by its product definition.
 An edge of ${G^\infty}$ of the form $v_{u,w}v_{u+i,w}$ (\resp $v_{u,w}v_{u,w+i}$) for $i\in \set{-1,1}$, is an \emph{horizontal} (\resp \emph{vertical}) edge of ${G^\infty}$. An edge $e$ of $P$ is an \emph{horizontal} (\resp \emph{vertical}) edge if it is the quotient of horizontal (resp. vertical) edges of ${G^\infty}$.
\end{definition}

\begin{definition}
Let $W_{G^\infty}$ be a walk in ${G^\infty}$ and $W_P$ a walk in $P$. We say that $W_{G^\infty}$ is a {\em representation} of $W_P$ if and only if $\faktor{W_{G^\infty}}{Q} = W_P$. We also say that $W_{G^\infty}$ {\em represents} $W_P$.
\end{definition}

By definition, all representations of $W_P$ have the same number of vertices as $W_P$. Let us make the following observation on the representations of a walk $W_P$. %First we want to characterize the representations of a walk $W_P$.

% \begin{obs}
% \label{claim:voisinage-grille}
% Let $u$ and $u'$ be two distinct vertices of ${G^\infty}$ with $u = v_{x,y}$. We have $\overline{u} = \overline{u'}$ if and only if there exist $\alpha$ and $\beta$ such that $u' = v_{x + \alpha(2p+1), y + \beta q}$ and $(\alpha, \beta) \neq (0,0)$. In particular the distance between two distinct vertices of the same class in the graph ${G^\infty}$ is at least $\min(2p+1,q)\geq 3$.
% \end{obs}

% \begin{proof}
% Let $u = v_{x,y}$ and $u' = v_{x',y'}$. If $\overline{u} = \overline{u'}$, by definition of the quotient, $x'-x = \alpha (2p+1)$ and $y' - y = \beta q$ for some integers $\alpha$ and $\beta$. 
% Conversely, by definition of the quotient, $\overline{u} = \overline{u'}$.
% \end{proof}

\begin{obs}
\label{claim:unicite-representant}
If $W_{G^\infty}^1 = (s^1_i)_{0\leq i \leq n}$ and $W_{G^\infty}^2= (s^2_i)_{0\leq i \leq n}$ are two walks (of the same length) in ${G^\infty}$ representing $W_P$, then there exist $\alpha,\beta \in \ZZ$ such that for all $i \in \set{0,\dots,n}$, if $s^1_i = v_{x,y}$, then $s^2_i = v_{x + \alpha (2p+1), y + \beta q}$. In particular, if they have the same starting vertices, then $W_{G^\infty}^1 = W_{G^\infty}^2$.
\end{obs}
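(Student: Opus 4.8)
The plan is to track, coordinate by coordinate, how two representations of the same walk can differ, exploiting the fact that in a walk consecutive vertices differ by exactly one in exactly one coordinate. First I would fix, for each index $i$, the coordinates of $s^1_i$, say $s^1_i = v_{x_i,y_i}$. Since both $W_{G^\infty}^1$ and $W_{G^\infty}^2$ represent $W_P$, applying the quotient map to either walk yields the same sequence in $P$; in particular $s^1_i$ and $s^2_i$ project to the same vertex of $P = \faktor{G^\infty}{Q}$ for every $i$. By the definition of the identification associated with $Q = \ZZ_{2p+1} \times \ZZ_q$, this means there are integers $\alpha_i,\beta_i$ (a priori depending on $i$) with $s^2_i = v_{x_i + \alpha_i(2p+1),\, y_i + \beta_i q}$. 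The whole statement then reduces to showing that the sequences $(\alpha_i)_i$ and $(\beta_i)_i$ are constant.

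The key step, and really the only substantial point, is to compare successive terms using the walk condition. Writing $x'_i = x_i + \alpha_i(2p+1)$ for the horizontal coordinate of $s^2_i$, the edges $s^1_i s^1_{i+1}$ and $s^2_i s^2_{i+1}$ of $G^\infty$ force $x_{i+1} - x_i \in \set{-1,0,1}$ and $x'_{i+1} - x'_i \in \set{-1,0,1}$. Subtracting gives
$$ (\alpha_{i+1} - \alpha_i)(2p+1) = (x'_{i+1} - x'_i) - (x_{i+1} - x_i), $$
whose right-hand side has absolute value at most $2$. Since $2p+1 \geq 3$ (the factor $BC_{2p+1}$ is a cycle, so $p \geq 1$), the only multiple of $2p+1$ lying in $\set{-2,-1,0,1,2}$ is $0$, hence $\alpha_{i+1} = \alpha_i$. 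The identical argument applied to the vertical coordinates, using $q \geq 3$ (the factor $UC_q$ is a cycle), yields $\beta_{i+1} = \beta_i$.

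By a trivial induction on $i$ I then conclude $\alpha_i = \alpha_0 =: \alpha$ and $\beta_i = \beta_0 =: \beta$ for all $i$, which is exactly the asserted relation between $s^1_i$ and $s^2_i$. For the final sentence, if the two walks share the same starting vertex then $s^1_0 = s^2_0$, i.e. $\alpha_0 = \beta_0 = 0$; hence $\alpha = \beta = 0$ and $s^1_i = s^2_i$ for every $i$, so $W_{G^\infty}^1 = W_{G^\infty}^2$. No step here is genuinely hard; the one place to stay careful is the magnitude bound, which relies on both moduli $2p+1$ and $q$ being at least $3$, a fact guaranteed by the factors being non-degenerate cycles.
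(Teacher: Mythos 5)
Your proof is correct. The paper states this as an Observation and offers no proof of its own, so there is nothing to diverge from; your argument — decomposing $s^2_i = v_{x_i+\alpha_i(2p+1),\,y_i+\beta_i q}$ and showing the sequences $(\alpha_i)$, $(\beta_i)$ are constant because a jump would force a nonzero multiple of $2p+1$ (or of $q$) to have absolute value at most $2$ — is exactly the natural unique-lifting justification the author is implicitly relying on. You are also right to flag the moduli bound as the one genuine care point: for a modulus equal to $2$ uniqueness of lifts really does fail (a step in the quotient could lift to $+1$ or $-1$), and it is the simplicity of the cycle factors ($q \geq 3$, $2p+1 \geq 3$) that rules this out.
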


% \begin{proof}
% We prove the result by induction on $i$. It is true for $i= 0$ by Observation~\ref{claim:voisinage-grille}. If it is true for some value $i_0$ with $i_0 < n$, let $s^1_{i_0} = v_{x,y}$, $s^2_{i_0} = v_{x + \alpha (2p+1), y + \beta q}$ and $s^1_{i_0 +1} = v_{x',y'}$, then $s^2_{i_0+1}$  and $v_{x' + \alpha (2p+1), y' + \beta q}$ are in the same equivalence class by definition of the two walks, and they are at distance at most $2$, thus they are equal by Observation~\ref{claim:voisinage-grille}. %This proves the claim by induction. 
% %\qed
% \end{proof}
%
%\begin{proof}
%Suppose they are different. Let $s_i$ such that the two walks are equal up to the $i$th term and such that the $i+1$th term $s^1_{i+1}$ and $s^2_{i+1}$ of the two walk are different.
%As they both represent $W_T$, $\overline{s^1_{i+1}} = \overline{s^2_{i+1}}$ and they are both neighbors of $s_i$ in $G$: $s^1_{i+1}$, $s^2_{i+1} \in N_G(s_i)$. By applying Observation~\ref{claim:voisinage-grille}, we get $s^1_{i+1} = s^2_{i+1}$, a contradiction.
%
%Now suppose $s^1_0 = v_{x,y}$ and $s^2_0 = v_{x+\alpha (2p+1), y + \beta q}$ ($\alpha$ and $\beta$ exists since $\overline{s^1_0} = \overline{s^2_0}$). Take $W_G^3$ defined $s^3_i = v_{x - \alpha (2p+1), y -\beta q}$ if $s^2_i = v_{x,y}$. It is also a representation of $W_T$ by definition and has the same starting point as $W^1_G$. Thus it is $W^1_G$. This concludes the proof of this claim.
%\end{proof}

We are now ready to define what is a turn of a walk around the torus.

\begin{definition}
%For a closed walk $W_T$ of $T$ we define its {\em number of turns} $\tau_x$ and $\tau_y$ by: 
%Take $W_G$ a walk in $G$, suppose that $v_{x,y}$ is its starting vertex and that $v_{r,s}$ is its end vertex. We define its number of turns $\tau_x$ and $\tau_y$ by:
Let $W_{G^\infty}$ be a walk in ${G^\infty}$ starting with $v_{x,y}$ and ending with $v_{z,t}$. We define the {\em number of horizontal turns} $\tau_x$ and  the {\em number of vertical turns} $\tau_y$ of $W_{G^\infty}$ by:
$$ \tau_x(W_{G^\infty}) = \abs{\frac{z-x}{2p+1}}, \tau_y(W_{G^\infty}) = \abs{\frac{t-y}{q}}.$$

For  a closed walk $W_P$ in $P$, let $\tau_x(W_P) = \tau_x(W_{G^\infty})$ (resp. $\tau_y(W_P) =\tau_y(W_{G^\infty})$) be the number of horizontal (resp. vertical) turns of $W_p$ where $W_{G^\infty}$ is an arbitrary representation of $W_P$.
\end{definition}

\begin{claim}
The two quantities $\tau_x(W_P)$ and $\tau_y(W_P)$ are integers and do not depend on the choice of the representation $W_{G^\infty}$ of $W_P$.
\end{claim}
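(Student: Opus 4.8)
The plan is to treat the two assertions separately, each reducing to an elementary property of the quotient $P = \faktor{{G^\infty}}{Q}$ with $Q = \ZZ_{2p+1} \times \ZZ_q$.

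First I would establish integrality. Fix a representation $W_{G^\infty}$ of the closed walk $W_P$, say starting at $v_{x,y}$ and ending at $v_{z,t}$. Since $W_P$ is a \emph{closed} walk, its first and last vertices coincide in $P$. The key point to keep in mind is that $W_{G^\infty}$ itself need not be closed; it is only required that the quotient map send its endpoints $v_{x,y}$ and $v_{z,t}$ to those (equal) endpoints of $W_P$. Hence $v_{x,y}$ and $v_{z,t}$ lie in the same equivalence class of $Q$, which by the definition of the identification means that $z-x$ is a multiple of $2p+1$ and $t-y$ is a multiple of $q$. Therefore $\frac{z-x}{2p+1}$ and $\frac{t-y}{q}$ are integers, and $\tau_x(W_P)$ and $\tau_y(W_P)$, being their absolute values, are integers as well.

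Next I would prove independence of the chosen representation, which is exactly where Observation~\ref{claim:unicite-representant} does the work. Let $W^1_{G^\infty}$ and $W^2_{G^\infty}$ be two representations of $W_P$. By that observation there exist $\alpha,\beta \in \ZZ$ such that whenever $v_{x,y}$ is the $i$th vertex of $W^1_{G^\infty}$, the $i$th vertex of $W^2_{G^\infty}$ is $v_{x+\alpha(2p+1),\, y+\beta q}$. In particular the starting and ending coordinates of $W^2_{G^\infty}$ are those of $W^1_{G^\infty}$ shifted by the same vector $(\alpha(2p+1),\beta q)$. When I form the differences $z-x$ and $t-y$ entering the definitions of $\tau_x$ and $\tau_y$, this common shift cancels, so $\tau_x(W^1_{G^\infty}) = \tau_x(W^2_{G^\infty})$ and likewise $\tau_y(W^1_{G^\infty}) = \tau_y(W^2_{G^\infty})$. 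Thus the values do not depend on the representation, and the definition $\tau_x(W_P) = \tau_x(W_{G^\infty})$ (resp. $\tau_y$) is well posed.

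I do not expect a genuine obstacle here, since Observation~\ref{claim:unicite-representant} supplies the well-definedness and the quotient structure supplies integrality. The only subtlety, worth stating explicitly in the write-up, is the interplay just described: a representation of a closed walk need not be closed, and it is precisely the congruences coming from its endpoints projecting to one vertex of $P$ that make the two turn numbers integral.
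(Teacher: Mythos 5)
Your proof is correct and matches the paper's argument essentially verbatim: integrality follows from the endpoints of a representation lying in the same $Q$-equivalence class (so $z-x$ and $t-y$ are multiples of $2p+1$ and $q$ respectively), and independence of the representation follows from Observation~\ref{claim:unicite-representant}, with the common shift $(\alpha(2p+1),\beta q)$ cancelling in the differences. Your explicit remark that a representation of a closed walk need not itself be closed is a valid clarification that the paper leaves implicit.
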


\begin{proof}
First if $W_P$ is a closed walk in $P$ and $W_{G^\infty}$ represents $W_P$, then $\overline{v_{x,y}} = \overline{v_{z,t}}$ thus $z = x + n(2p+1)$ and $t=y+mq$ for some integers $n,m\in \ZZ$. Hence $\tau_x(W_{G^\infty})$ and $\tau_y(W_{G^\infty})$ are integers.

Now take two representations $W_{G^\infty}^1$ and $W_{G^\infty}^2$ of $W_P$. By Observation~\ref{claim:unicite-representant}, if $W_{G^\infty}^1$  starts at $v_{x_1,y_1}$ and ends at $v_{z_1,t_1}$ while $W_{G^\infty}^2$ starts at $v_{x_2,y_2}$ and ends at $v_{z_2,t_2}$, then $x_2 = x_1 + \alpha (2p+1)$, $y_2 = y_1 + \beta q$, $z_2 = z_1 + \alpha (2p+1)$ and $t_2 = t_1 + \beta q$. Thus $\tau_x(W_{G^\infty}^1) = \tau_x(W^2_{G^\infty})$ and $\tau_y(W_{G^\infty}^1) = \tau_y(W^2_{G^\infty})$. Hence this quantity is well defined for $W_P$.
\end{proof}

The main result of this subsection is the following proposition.

\begin{proposition}
\label{prop:sum-tau}
If $W_P$ is a closed walk in $P$ of even length, then:

$$  q \tau_y(W_P)  +\tau_x(W_P) \equiv 0 \pmod 2.$$
\end{proposition}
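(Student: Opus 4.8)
The plan is to lift $W_P$ to the infinite grid ${G^\infty}$ and run a parity count on its steps. By definition $W_P$ has a representation $W_{G^\infty}$, a walk of the same (even) length $L$ in ${G^\infty}$, starting at some $v_{x,y}$ and ending at some $v_{z,t}$ with $z = x + n(2p+1)$ and $t = y + mq$ for integers $n,m$, and with $\tau_x(W_P) = \abs{n}$, $\tau_y(W_P) = \abs{m}$. Each of the $L$ steps of $W_{G^\infty}$ is either horizontal (changing the first coordinate by $\pm 1$) or vertical (changing the second coordinate by $\pm 1$). Let $h$ be the number of horizontal steps and $w$ the number of vertical steps, so that $h + w = L$.

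The one observation driving everything is that a sum of $k$ terms, each equal to $\pm 1$, has the same parity as $k$. Applied to the horizontal displacement, $z - x$ is a sum of $h$ terms $\pm 1$, hence $z - x \equiv h \pmod 2$. On the other hand $z - x = n(2p+1)$, and since $2p+1$ is odd and $\abs{n} \equiv n \pmod 2$, we get $z - x \equiv n \equiv \tau_x(W_P) \pmod 2$. Combining, $\tau_x(W_P) \equiv h \pmod 2$.

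The same count on the vertical steps gives $t - y \equiv w \pmod 2$, while $t - y = mq$, so $mq \equiv w \pmod 2$. Multiplying the congruence $\abs{m} \equiv m \pmod 2$ by $q$ yields $q\,\tau_y(W_P) = q\abs{m} \equiv qm \equiv w \pmod 2$. Adding the two results, and using that the length is even,
$$ \tau_x(W_P) + q\,\tau_y(W_P) \equiv h + w = L \equiv 0 \pmod 2, $$
which is the claimed identity.

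There is no serious obstacle here: once the walk is lifted, the statement reduces to the elementary parity fact above, together with the two structural features of the quotient group, namely that the horizontal modulus $2p+1$ is odd (so it disappears mod $2$) and that the length hypothesis forces $h + w$ even. The only points requiring care are that $\tau_x,\tau_y$ are defined via absolute values, so I must invoke $\abs{n} \equiv n \pmod 2$, and that the count is independent of the chosen representation, which is already guaranteed by the preceding claim that $\tau_x(W_P)$ and $\tau_y(W_P)$ are well defined.
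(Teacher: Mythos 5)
Your proof is correct and follows essentially the same route as the paper: lift $W_P$ to a representation in $G^\infty$, observe that the horizontal (resp.\ vertical) displacement is a sum of $\pm 1$ steps and hence congruent mod $2$ to the number of horizontal (resp.\ vertical) edges, use that $2p+1$ is odd to reduce $(2p+1)\tau_x$ to $\tau_x$, and sum the two counts against the even total length. Your explicit attention to the absolute values ($\abs{n} \equiv n \pmod 2$) is a point the paper glosses over slightly, but the argument is the same.
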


\begin{proof}
Let $W_{G^\infty}$ be a representation of $W_P$ in ${G^\infty}$ starting at $v_{x,y}$ and ending at $v_{z,t}$. For each horizontal (\resp vertical) edge $e$ of the form $v_{u,w}v_{u+i,w}$ (\resp $v_{u,w}v_{u,w+i}$) for $i\in \set{-1,1}$, let $\ell(e) = i$. Let $E_h(W_{G^\infty})$ be the set of horizontal edges of $W_{G^\infty}$ and $E_v(W_{G^\infty})$ the set of vertical edges of $W_{G^\infty}$.  We then have:
%\begin{align*}
%\text{number of vertical edges of $W_{G^\infty}$} &\equiv s-y \mod 2,\\
% &\equiv \tau_y(W_T) q \mod 2.
%\end{align*}

%Similarly:
$$
\sum_{e \in E_h(W_{G^\infty})} \ell(e) \equiv z-x \equiv (2p+1)\tau_x(W_P) 
 \equiv \tau_x(W_P) \pmod 2,
$$
and 
$$
 \sum_{e \in E_h(W_{G^\infty})} \ell(e) \equiv \sum_{e \in E_h(W_{G^\infty})} 1 \equiv \abs{E_h(W_{G^\infty})} \pmod 2.
$$
Similarly,
$$
\abs{E_v(W_{G^\infty})} \equiv t-y
 \equiv q\tau_y(W_P) \pmod 2.
$$
As $W_P$ and $W_{G^\infty}$ are of even length, we get:

$$0 \equiv \abs{E(W_{G^\infty})} \equiv q \tau_y(W_P)  + \tau_x(W_P) \pmod 2.$$
\end{proof}

\subsection{Regions induced by a coloring of $(P,\pi)$}
\label{sec:boundary}

The aim of this section is to define a suitable set of walks in order to apply Proposition~\ref{prop:sum-tau}. For this, we will introduce several topological notions.% on the coloring of the torus $(P,\pi')$.

\begin{definition}
Let $P_{AB} = P[\varphi^{-1}\{a,b\}]$ and $P_{CD} = P[\varphi^{-1}\{c,d\}]$, the subgraphs of $P$  induced by the vertices colored $a$ and $b$ and by the vertices colored $c$ and $d$, respectively.
%$\mathcal{AB} = \varphi^{-1}(\set{a,b})$ and $\mathcal{CD} = \varphi^{-1}(\set{c,d})$.
A {\em region} $X$ of $P$ is a connected component of $P_{AB}$ or $P_{CD}$. We say that $X$ is {\em of type  $ab$} in the first case and {\em of type $cd$} in the latter.
%\todo{rm type cd ?}
{\em The boundary} $\partial X$ of a region $X$ is the subset of vertices of $X$ that are adjacent to a vertex not in $X$:
$$ \partial X = \setcond{x \in X}{ N(x) \nsubseteq X}.$$

%{\em The interior} $\mathring{A}$ of a region $X$ is equal to $X \setminus \partial X$. Alternatively $\mathring{A}$ is the subset of vertices of $X$ that are only adjacent to vertices in $X$.

%Two vertices $x$ and $y$ of the boundary of the region $X$ are {\em border neighbors} if they are in one of the two cases in Figure~\ref{fig:borderNeighbors}. In the first case, $x$ and $y$ have a common neighbor in $X$ and a common neighbor in $G\setminus X$. In the second case, $x$ and $y$ are adjacent and there exists $w$ and $z$ in $G\setminus X$ that are adjacent and $wy$ and $zx$ are edges of the graph. 
%
%{\em A border} $Y$ of a region $X$ is a subset of $X$ corresponding to an equivalence class of the transitive closure of the border neighborhood relation.
%
%{\color{red} TODO : figure avec les différentes notions.}
\end{definition}

\begin{claim}
\label{claim:flat-border}
The configuration of Figure~\ref{fig:forbidden-flat-border} cannot appear in the coloring of $(P,\pi')$. That is to say, for a region $X$ there do not exist two vertices $x,y \in X$ and $w,z \notin X$ such that $xy$, $xz$, $yw$ and $wz$ belong to $E(P)$. We call this configuration the {\em flat border configuration}.
\end{claim}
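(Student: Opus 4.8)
The plan is to derive a contradiction by evaluating the balance of the $4$-cycle $xywz$ (that is, $x\to y\to w\to z\to x$) in two incompatible ways. On one side, this cycle is a face of the toroidal grid $P$, as depicted in Figure~\ref{fig:forbidden-flat-border}, so the product structure forces it to be a $BC_4$. On the other side, the constraints $x,y\in X$ and $w,z\notin X$ pin down the four colors tightly enough that exactly one edge of the cycle becomes negative, so the cycle is a $UC_4$. Since $(P,\pi')\equiv(P,\pi)$ and equivalence preserves balance (Observation~\ref{lemma:balancing-and-homorphism}), these two verdicts cannot both hold.

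First I would show that every face of $(P,\pi)=UC_q\ssquare BC_{2p+1}$ is balanced. In the face $xywz$ the pair of opposite edges $xy$ and $wz$ are copies of one and the same edge of one factor, and the pair $yw$ and $zx$ are copies of one and the same edge of the other factor. By the definition of the Cartesian product of signed graphs, all copies of a fixed factor-edge share the same sign, so in $(P,\pi)$ the four signs of the cycle are $s,t,s,t$ for some $s,t\in\set{+1,-1}$, whose product $(st)^2=+1$ is positive. Hence the face is a $BC_4$ in $(P,\pi)$, and therefore also in $(P,\pi')$. (This is also exactly the content of condition~3 of Lemma~\ref{lem:decomp}.)

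Next I would determine the four colors, treating the case where $X$ is of type $ab$; the type $cd$ case is symmetric, obtained by exchanging $\set{a,b}$ and $\set{c,d}$. Since $x$ and $y$ are adjacent and both belong to the connected component $X$ of $P_{AB}$, properness of $\varphi$ gives $\set{\varphi(x),\varphi(y)}=\set{a,b}$. Now $w$ is adjacent to $y\in X$ but $w\notin X$: were $\varphi(w)\in\set{a,b}$, then $w$ would lie in $P_{AB}$ in the same component as $y$, \ie $w\in X$, a contradiction; hence $\varphi(w)\in\set{c,d}$. The same argument for $z$ (adjacent to $x\in X$) yields $\varphi(z)\in\set{c,d}$, and since $w$ and $z$ are adjacent, $\set{\varphi(w),\varphi(z)}=\set{c,d}$.

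Finally I would read off the signs in $(P,\pi')$ from the homomorphism $\varphi$ to $\KIVMixed$, recalling that $ab$ is its unique negative edge. The edge $xy$ maps to $ab$ and is negative; the edge $wz$ maps to $cd$ and is positive; each of $yw$ and $zx$ joins a vertex colored in $\set{a,b}$ to a vertex colored in $\set{c,d}$ and so maps to a positive edge. Thus the cycle $xywz$ has exactly one negative edge and is a $UC_4$, contradicting the second paragraph. The point requiring the most care is the deduction that $w$ and $z$ receive colors in $\set{c,d}$: this relies on $X$ being a \emph{full} connected component of $P_{AB}$ (not merely some set containing $x,y$), and, combined with $\KIVMixed$ having a single negative edge, is precisely what makes the number of negative edges on the face odd.
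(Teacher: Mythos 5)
Your proposal is correct and takes essentially the same route as the paper's proof: the Cartesian product structure forces the face $xywz$ to be a $BC_4$ (opposite edges being copies of the same factor edge), while the coloring into $\KIVMixed$, whose unique negative edge is $ab$, forces exactly one negative edge on the cycle, and switching preserves balance. The only difference is one of detail: you spell out the deduction $\set{\varphi(x),\varphi(y)}=\set{a,b}$ and $\set{\varphi(w),\varphi(z)}=\set{c,d}$ (using that $X$ is a full connected component), a step the paper leaves implicit when it asserts that $xywz$ is unbalanced.
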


\begin{proof}
Suppose to the contrary that the configuration appears. Then, the cycle $xywz$ of length $4$ is unbalanced. Thus, before switching, $xywz$ was already an unbalanced cycle of length $4$ in  $(P,\pi)$ since balance is preserved by switching. By definition of $(P,\pi)$ as a Cartesian product of cycles, the signs of $xy$ and $wz$ are the same. It is also the case for $zx$ and $wy$. Thus this cycle is balanced (it has an even number of negative edges), a contradiction.
\end{proof}

\begin{figure}[t]
\centering
\begin{tikzpicture}
\tikzstylemacro{}
\node[v] (x) at (1.5,1.5) {$x$};
\node[v] (y) at (0,0) {$w$};
\node[v] (a) at (1.5,0) {$y$};
\node[v] (b) at (0,1.5) {$z$};
\draw[b] (x) -- (a) -- (y) -- (b) -- (x);
\draw[b,dashed] (0.75,2.5) -- (0.75,-.5);
\coordinate[label=right:$\in X$] () at (0.75,2) {};
\coordinate[label=left:$\notin X$] () at (0.75,2) {};
\end{tikzpicture}
\caption{The flat border configuration when $x,y\in X$ and $z, w \notin X$, where $X$ is a region.}
\label{fig:forbidden-flat-border}
\end{figure}
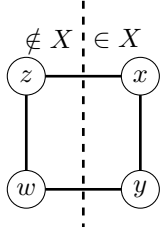

\begin{figure}[t]
\centering

\begin{tikzpicture}
\tikzstylemacro{}
\node[v] (x) at (1.5,1.5) {$x$};
\node[v] (y) at (0,0) {$y$};
\node[v] (a) at (1.5,0) {$w$};
\node[v] (b) at (0,1.5) {$z$};
\draw[b] (x) -- (a) -- (y) -- (b) -- (x);
\draw[b,dashed] (0.75,2.5) -- (0.75,0.75) -- (-0.75,0.75);
\coordinate[label=right:$\in X$] () at (0.75,2) {};
\coordinate[label=left:$\notin X$] () at (0.75,2) {};
\end{tikzpicture}

\caption{The vertices $x$ and $y$ of the region $X$ are border neighbors.}
\label{fig:borderNeighbors}
\end{figure}
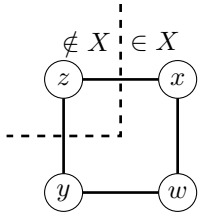

\begin{definition}
Two vertices $x$ and $y$ on the boundary of the region $X$ are {\em border neighbors} if $x$ and $y$ have a common neighbor in $X$ and a common neighbor in $P\setminus X$ (see Figure~\ref{fig:borderNeighbors}). 
We note $BN(x)$ the set of border neighbors of $x$.

{\em A border} $B$ of a region $X$ is a subset of $\partial X$ corresponding to an equivalence class for the transitive closure of the border neighborhood relation (see Figure~\ref{fig:region-border-def}). That is to say, two vertices $x$ and $y$ of $\partial X$ are in the same border $B$ of $X$ if and only if there exists a sequence $u_0,u_1,\dots,u_k$ of vertices of $B$ such that $u_0 = x$, $u_k = y$ and for all $0 \leq i < k$, $u_i$ and $u_{i+1}$ are border neighbors. 
\end{definition}

\begin{figure}[!t]
\centering
\begin{tikzpicture}
\tikzstylemacro{}
\node[v] (x00) at (0,0) {$c$};
\node[v,rectangle] (x10) at (1,0) {$a$};
\node[v] (x20) at (2,0) {$c$};
\node[v,rectangle] (x30) at (3,0) {$a$};
\node[v] (x40) at (4,0) {$c$};
\node[v] (x50) at (5,0) {$d$};

\node[v,rectangle] (x01) at (0,1) {$a$};
\node[v] (x11) at (1,1) {$b$};
\node[v,rectangle] (x21) at (2,1) {$a$};
\node[v] (x31) at (3,1) {$b$};
\node[v,rectangle] (x41) at (4,1) {$a$};
\node[v] (x51) at (5,1) {$c$};

\node[v] (x02) at (0,2) {$d$};
\node[v,rectangle] (x12) at (1,2) {$a$};
\node[v] (x22) at (2,2) {$b$};
\node[v] (x32) at (3,2) {$a$};
\node[v] (x42) at (4,2) {$b$};
\node[v,rectangle] (x52) at (5,2) {$a$};

\node[v] (x03) at (0,3) {$b$};
\node[v] (x13) at (1,3) {$c$};
\node[v,rectangle] (x23) at (2,3) {$a$};
\node[v] (x33) at (3,3) {$b$};
\node[v,rectangle] (x43) at (4,3) {$a$};
\node[v] (x53) at (5,3) {$c$};

\node[v] (x04) at (0,4) {$a$};
\node[v] (x14) at (1,4) {$b$};
\node[v] (x24) at (2,4) {$d$};
\node[v,rectangle] (x34) at (3,4) {$a$};
\node[v] (x44) at (4,4) {$c$};
\node[v] (x54) at (5,4) {$d$};

\foreach \i in {0,1,2,3,4}
	\draw (-0.6,\i) -- (x0\i) -- (x1\i) -- (x2\i) -- (x3\i) -- (x4\i) -- (x5\i) -- (5.6,\i);
	
\foreach \i in {0,1,2,3,4,5}
	\draw (\i, -0.6) -- (x\i0) -- (x\i1) -- (x\i2) -- (x\i3) -- (x\i4) -- (\i,4.6);
	
\draw[line width =2pt] (0.5,0.5) -- (0.5,-0.5) -- (1.5,-0.5) -- (1.5,0.5) -- (2.5,0.5) -- (2.5, -0.5) -- (3.5,-0.5) -- (3.5,0.5) -- (4.5,0.5) -- (4.5,1.5) -- (5.5,1.5) -- (5.5,2.5) -- (4.5,2.5) -- (4.5,3.5) -- (3.5,3.5) -- (3.5,4.5) -- (2.5,4.5) -- (2.5,3.5) -- (1.5,3.5) -- (1.5,2.5) -- (0.5,2.5) -- (0.5,1.5) -- (-0.5,1.5) -- (-0.5,0.5) -- (0.5,0.5);

\begin{scope}[very thick,decoration={
    markings,
    mark=at position 0.5 with {\arrow{>}}}
    ] 
    \draw[line width=2pt, dashdotted, postaction={decorate}] (0.85,0.85) -- (0.15,1);
    \draw[line width=2pt, dashdotted, postaction={decorate}] (0.15,1) -- (0.85,1.15) -- (x12) -- (x22) -- (x23) -- (2.85,3.15) -- (x34) -- (3.15,3.15) -- (x43) -- (4.15,2.15) -- (x52) -- (4.15,1.85) -- (x41) -- (3.15,0.85) -- (x30) -- (2.85,0.85) -- (x21) -- (1.15,0.85) -- (x10) -- (0.85,0.85);
\end{scope}

\end{tikzpicture}
\caption{A region $X$ delimited by the bold line and the only border $B$ of $X$ is represented by the square vertices. The dotted line represents the only walk in $\mathcal W_B$.}
\label{fig:region-border-def}
\end{figure}
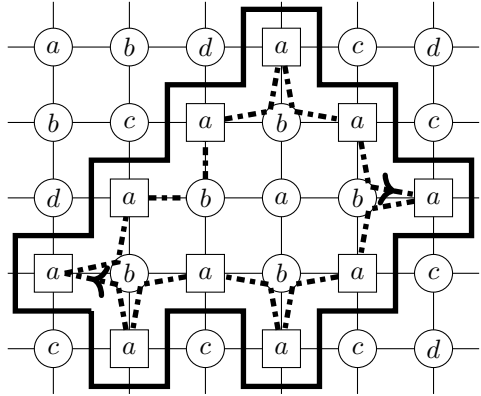

\begin{claim}
\label{claim:same-color-border}
All vertices of a border $B$ of a region $X$ have the same color called the {\em color of} $B$.
\end{claim}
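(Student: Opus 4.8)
The plan is to reduce the statement to a purely local assertion about two border neighbours and then to propagate it along the whole border by transitivity. Recall that a border $B$ is, by definition, an equivalence class for the transitive closure of the border-neighbourhood relation; hence any two vertices of $B$ are joined by a finite chain $u_0, u_1, \dots, u_k$ of vertices of $B$ in which each consecutive pair $u_i, u_{i+1}$ consists of border neighbours. So it suffices to prove that if $x$ and $y$ are border neighbours of a region $X$, then $\varphi(x) = \varphi(y)$; the equality for all of $B$ then follows immediately by induction along such a chain, and this common value is what we call the color of $B$.

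To handle two border neighbours $x$ and $y$, I would use the vertex witnessing their adjacency inside $X$. By the definition of border neighbours, $x$ and $y$ share a common neighbour $w$ lying in $X$ (and also a common neighbour lying outside $X$, which will not be needed here). The key structural fact is that a region uses only two colours: if $X$ is of type $ab$ then $X \subseteq P[\varphi^{-1}\set{a,b}]$, so every vertex of $X$ — in particular $x$, $y$ and $w$ — is coloured $a$ or $b$; if $X$ is of type $cd$ the same statement holds with $\set{c,d}$.

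I would then invoke properness of the colouring. Since $\varphi$ is a homomorphism to the simple signed graph $\KIVMixed$, it is in particular a proper vertex-colouring of the underlying graph, so adjacent vertices receive distinct colours. The edges $xw$ and $yw$ therefore give $\varphi(x) \neq \varphi(w)$ and $\varphi(y) \neq \varphi(w)$. As the three colours $\varphi(x)$, $\varphi(y)$, $\varphi(w)$ all lie in a two-element set, the only possibility is $\varphi(x) = \varphi(y)$, which is exactly what we want. Combining this with the chain argument of the first paragraph shows that all vertices of $B$ share a single colour.

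I do not expect a genuine obstacle here: the argument only uses that $\varphi$ is a proper colouring and that each region sees just two colours, and it makes no use of the signs or of the toroidal structure. The one point to state carefully is that the common neighbour invoked in the argument is the one lying \emph{inside} $X$, so that it too is coloured from the region's two-colour palette; the outside common neighbour, although part of the definition of border neighbours, plays no role in this particular claim.
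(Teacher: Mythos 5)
Your proof is correct and follows essentially the same route as the paper: reduce by transitivity to a pair of border neighbours, take their common neighbour inside $X$, and use properness of the colouring together with the fact that $X$ is coloured from a two-element palette to force $\varphi(x) = \varphi(y)$. Your write-up is merely a more explicit version of the paper's two-line argument (the paper states it as: WLOG $X$ is of type $ab$ and the common neighbour has colour $b$, so by properness $x$ and $y$ have colour $a$), and you correctly note that the common neighbour outside $X$ plays no role.
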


\begin{proof}
By definition of $B$ it suffices to show that any two border neighbors $x$ and $y$ have the same color. Let $z$ be their common neighbor in $X$. Without loss of generality, suppose $X$ is of type $ab$ and $z$ has color $b$. 
Since the coloring is proper, $x$ and $y$ have color $a$.
%Suppose that $x$ has color $a$ and $y$ has color $b$. Since $z \in X$, $z$ has color $a$ or $b$ but in both cases the coloring would not be proper, a contradiction with the definition of $\varphi$.
\end{proof}

%The {\em color} of a border $B$ is the color of the vertices of the border.

\begin{claim}
\label{claim:number-of-border-neighbors}
A vertex $x$ of a border $B$ has an even number of border neighbors. Moreover if $BN(x) = \varnothing$, then $X = \set{x}$.
\end{claim}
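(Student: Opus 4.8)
The plan is to work entirely in the local picture around the fixed boundary vertex $x$, using that $P$ is a toroidal grid: $x$ has four neighbours, and there are exactly four unit $4$-cycles $x\,x_i\,y_i\,x_{i+1}$ through $x$, one for each consecutive pair of neighbours $x_i,x_{i+1}$ taken in cyclic order (up, right, down, left, indices mod $4$). I would first record that the four ``diagonal'' corners $y_1,\dots,y_4$ are the only vertices of $P$ sharing at least two neighbours with $x$: a neighbour of $x$ shares at most one neighbour with it, and so does any other vertex at distance two, a fact read off directly from the description of $P$ as $\faktor{{G^\infty}}{Q}$ (this needs a small check in the degenerate cases $q=3$ or $2p+1=3$, where the two opposite neighbours of $x$ become adjacent, but the conclusion survives). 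Since a border neighbour of $x$ must, by definition, have a common neighbour with $x$ inside $X$ and another outside $X$, it shares two neighbours with $x$ and is therefore one of the $y_i$; hence it suffices to decide membership in $BN(x)$ corner by corner.

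The crux is the characterisation I would establish next: $y_i\in BN(x)$ if and only if exactly one of the flanking neighbours $x_i,x_{i+1}$ lies in $X$. The forward direction is immediate, since the two common neighbours of $x$ and $y_i$ are precisely $x_i$ and $x_{i+1}$, one of which must be in $X$ and the other outside. For the converse, assume exactly one of $x_i,x_{i+1}$ lies in $X$; I claim $y_i\in X$, and then, as $y_i$ has a neighbour outside $X$, it lies in $\partial X$ and is a border neighbour of $x$. If instead $y_i\notin X$, then in the $4$-cycle $x\,x_i\,y_i\,x_{i+1}$ the two vertices $x$ and the in-$X$ flank are adjacent and both in $X$, while $y_i$ and the out-of-$X$ flank are adjacent and both outside $X$ --- exactly the flat border configuration forbidden by Claim~\ref{claim:flat-border}. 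Feeding the flat border claim into this local count is where essentially all the content lies, and the step I expect to demand the most care.

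Granting the characterisation, the two assertions follow by elementary counting. Writing $b_i=1$ if $x_i\in X$ and $b_i=0$ otherwise, $\abs{BN(x)}$ equals the number of indices $i$ with $b_i\neq b_{i+1}$, i.e. the number of sign changes around the cyclic word $b_1b_2b_3b_4$, which is necessarily even; this gives the first statement. For the second, if $BN(x)=\varnothing$ then the cyclic word is constant, so all four neighbours of $x$ have the same status with respect to $X$; since $x\in\partial X$ at least one neighbour lies outside $X$, forcing all four outside. Then $x$ has no neighbour in $X$, and as $X$ is a connected subgraph containing $x$ this forces $X=\set{x}$, completing the proof.
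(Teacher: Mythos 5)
Your route is genuinely different from the paper's and, where it works, cleaner. The paper proves evenness by direct case analysis: it assumes $\abs{BN(x)}$ is odd, reduces to the two configurations $\abs{BN(x)}=1$ and $\abs{BN(x)}=3$ up to rotation and symmetry, and in each one propagates membership in $X$ around the eight vertices surrounding $x$ until Claim~\ref{claim:flat-border} yields a contradiction; the $BN(x)=\varnothing$ case is handled separately, exactly as you do. You instead prove a local characterisation --- $y_i\in BN(x)$ if and only if exactly one of the flanking neighbours $x_i,x_{i+1}$ lies in $X$ --- and obtain evenness as the parity of the number of sign changes of a cyclic binary word of length four. Both arguments lean on Claim~\ref{claim:flat-border} in the same essential way; your converse direction (if $y_i\notin X$, then $x$, the in-$X$ flank, $y_i$ and the out-of-$X$ flank form precisely the forbidden flat border configuration) is correct, and your characterisation in fact yields the paper's case analysis as a corollary.

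There is, however, a concrete hole in your preliminary reduction. The assertion that every vertex at distance two from $x$, other than the four diagonal corners, shares at most one neighbour with $x$ is false when $q=4$: the vertical cycles are then $4$-cycles, and the antipodal pair $v_{x,y}$, $v_{x,y+2}$ shares the \emph{two} common neighbours $v_{x,y+1}$ and $v_{x,y+3}$. You checked degenerate cases, but the wrong ones --- $q=3$ and $2p+1=3$ are harmless precisely because distance two in a triangle direction means adjacent, leaving a single common neighbour, while $q=4$ is the case that actually breaks the statement. Consequently, excluding such antipodal pairs from $BN(x)$ cannot be ``read off directly from the description of $P$ as a quotient'': at the level of the underlying graph it is simply not true that they have too few common neighbours. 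The exclusion does hold, but it needs the colouring. If, say, $v_{x,y+1}\in X$ with $X$ of type $ab$ and $v_{x,y},v_{x,y+2}\in X$, then properness of the colouring inside $\set{a,b}$ forces $\varphi(v_{x,y})=\varphi(v_{x,y+2})$, and the image of the closed walk around the column then has sign
$$\sigma\bigl(\varphi(v_{x,y})\varphi(v_{x,y+1})\bigr)^2\cdot\sigma\bigl(\varphi(v_{x,y})\varphi(v_{x,y+3})\bigr)^2 = +1,$$
contradicting the fact that this column is a $UC_4$-layer of $(P,\pi)$ and that homomorphisms preserve unbalanced closed walks. With that patch your proof is complete. (To be fair, the paper's own proof tacitly makes the same diagonal-only assumption through its figures; but your write-up asserts the false supporting statement explicitly, so it must be repaired rather than left implicit.)
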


\begin{figure}[!t]
\centering
\subfloat[Case 1: $\abs{BN(x)} = 1$.\label{fig:even-number-border-neighbor-1}]{
\begin{tikzpicture}
\tikzstylemacro

\node[v] (x) at (0,0) {$x$};
\node[v, rectangle] (w) at (0,1) {$i$};

\node[v] (1) at (1,1) {$1$};
\node[v, rectangle] (2) at (1,-1) {$2$};
\node[v, rectangle] (3) at (-1,-1) {$3$};

\node[v] (i1) at (1,0) {$j$};
\node[v] (i2) at (0,-1) {$k$};
\node[v, rectangle] (i3) at (-1,0) {$\ell$};

\node[v, rectangle] (i4) at (-1,1) {$4$};

\draw[b] (i3) -- (x) -- (w) -- (1);
\draw[b] (i1) -- (2) -- (i2) -- (3) -- (i3) -- (i4) -- (w);
\draw[b] (i2) -- (x);
\draw[b] (x) -- (i1) -- (1);
\end{tikzpicture}
}
\hskip 1cm
\subfloat[Case 2: $\abs{BN(x)} = 3$.\label{fig:even-number-border-neighbor-2}]{
\begin{tikzpicture}
\tikzstylemacro

\node[v] (x) at (0,0) {$x$};
\node[v, rectangle] (w) at (0,1) {$i$};

\node[v] (1) at (1,1) {$1$};
\node[v] (2) at (1,-1) {$2$};
\node[v] (3) at (-1,-1) {$3$};

\node[v] (i1) at (1,0) {$j$};
\node[v, rectangle] (i2) at (0,-1) {$k$};
\node[v] (i3) at (-1,0) {$\ell$};

\node[v, rectangle] (i4) at (-1,1) {$4$};

\draw[b] (i3) -- (x) -- (w) -- (1);
\draw[b] (i1) -- (2) -- (i2) -- (3) -- (i3) -- (i4) -- (w);
\draw[b] (i2) -- (x);
\draw[b] (x) -- (i1) -- (1);
\end{tikzpicture}
}
\hskip 1cm
\subfloat[Case: $\abs{BN(x)} = 0$.\label{fig:even-number-border-neighbor-0}]{
\begin{tikzpicture}
\tikzstylemacro

\node[v] (x) at (0,0) {$x$};
\node[v, rectangle] (w) at (0,1) {$i$};

\node[v, rectangle] (1) at (1,1) {$1$};
\node[v, rectangle] (2) at (1,-1) {$2$};
\node[v, rectangle] (3) at (-1,-1) {$3$};

\node[v, regular polygon,regular polygon sides=3] (i1) at (1,0) {$j$};
\node[v, regular polygon,regular polygon sides=3] (i2) at (0,-1) {$k$};
\node[v, regular polygon,regular polygon sides=3] (i3) at (-1,0) {$\ell$};

\node[v, rectangle] (i4) at (-1,1) {$4$};

\draw[b] (i3) -- (x) -- (w) -- (1);
\draw[b] (i1) -- (2) -- (i2) -- (3) -- (i3) -- (i4) -- (w);
\draw[b] (i2) -- (x);
\draw[b] (x) -- (i1) -- (1);
\end{tikzpicture}
}
\caption{The two cases up to symmetry where $\abs{BN(x)}$ has an odd number of vertices and the case where $BN(x) = \emptyset$. The square vertices represent vertices not in the region of $x$, the circular ones are in the region while the triangular ones are undecided.}
\label{fig:even-number-border-neighbor}
\end{figure}
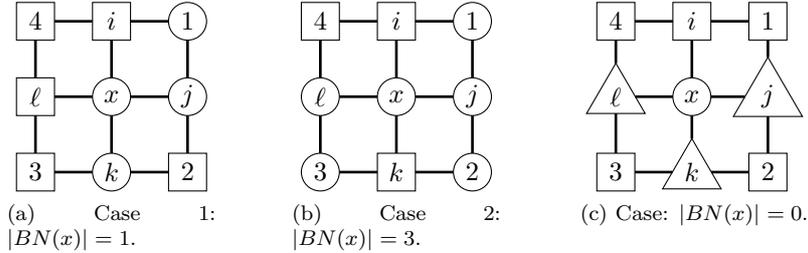

\begin{proof}
If $\abs{BN(x)}$ is odd, then we are in one of the first two cases of Figure~\ref{fig:even-number-border-neighbor}. We will use the notation of the figure.

If $\abs{BN(x)} = 1$, then up to rotation and symmetry, we can suppose that the vertex $1$ is the border neighbor of $x$ and that $j$ is their common neighbor in $X$. Thus $i \notin X$. Now $\ell \notin X$, as otherwise the vertices $i$, $\ell$, $4$ and $x$ would be in the flat border configuration, which cannot be by Claim~\ref{claim:flat-border}. The same argument implies $k \in X$ by considering $x$, $k$, $j$ and $2$. Thus $x$, $\ell$, $k$ and $3$ are in the flat border configuration. A contradiction.

If $\abs{BN(x)} = 3$, then up to rotation and symmetry, we can suppose that the vertex $4$ is not a border neighbor of $x$.
As $2$ is a border neighbor of $x$, one of $k$ and $j$ is in $X$ and the other is not. Without loss of generality, suppose $k \notin X$ and $j \in X$. As $3$ is a border neighbor of $x$, we have $\ell \in X$.  As $1$ is a border neighbor of $x$, we have $i \notin X$. Thus $4$, $i$, $\ell$ and $x$ are in the flat border configuration, a contradiction.

Now if $BN(x) = \varnothing$, we can suppose that $i \notin X$ as $x$ is in $\partial X$. Now to avoid the flat border configuration, $j$, $k$ and $\ell$ must not be in $X$. This proves that $X = \set{x}$. 
\end{proof}

We can now define the set of walks associated with the border.

\begin{definition}
\label{def:border-line}
We associate with a border $B$ of $X$, a set of closed walks $\mathcal{W}_B$ in $(P,\pi')$ included in $X$ (see Figure~\ref{fig:region-border-def}). This set of walks delimits the border of $X$.
We use $v_{i,j}$ to refer to the vertex $\overline{v_{i,j}}$ of $P$ for concision.

We will define the walks piece by piece.
In the particular case that $X$ has only one vertex, then $\mathcal{W}_B = \varnothing$. Now we can suppose that for each $x \in B$, we  have $BN(x) \neq \varnothing$ by Claim~\ref{claim:number-of-border-neighbors}.

\begin{figure}[!t]
\centering
\subfloat[The order of the vertices to choose.\label{fig:start-border-gen}]{
\begin{tikzpicture}
\tikzstylemacro

\node[v] (x) at (0,0) {$x$};
\node[v, rectangle] (w) at (0,1) {$w$};

\node[v, regular polygon,regular polygon sides=3] (1) at (1,1) {$1$};
\node[v, regular polygon,regular polygon sides=3] (2) at (1,-1) {$2$};
\node[v, regular polygon,regular polygon sides=3] (3) at (-1,-1) {$3$};

\node[v, regular polygon,regular polygon sides=3] (i1) at (1,0) {};
\node[v, regular polygon,regular polygon sides=3] (i2) at (0,-1) {};
\node[v, regular polygon,regular polygon sides=3] (i3) at (-1,0) {};

\node[v, regular polygon,regular polygon sides=3] (i4) at (-1,1) {};

\draw[b,dashed] (i3) -- (x) -- (w) -- (1) -- (i1) -- (2) -- (i2) -- (3) -- (i3) -- (i4) -- (w);
\draw[b,dashed] (i2) -- (x) -- (i1);
\end{tikzpicture}
}
\hskip 1cm
\subfloat[Case 1.\label{fig:start-border-1}]{
\begin{tikzpicture}
\tikzstylemacro

\node[v] (x) at (0,0) {$x$};
\node[v, rectangle] (w) at (0,1) {$w$};

\node[v] (1) at (1,1) {$y$};
\node[v, regular polygon,regular polygon sides=3] (2) at (1,-1) {$2$};
\node[v, regular polygon,regular polygon sides=3] (3) at (-1,-1) {$3$};

\node[v] (i1) at (1,0) {$z$};
\node[v, regular polygon,regular polygon sides=3] (i2) at (0,-1) {};
\node[v, regular polygon,regular polygon sides=3] (i3) at (-1,0) {};

\node[v, regular polygon,regular polygon sides=3] (i4) at (-1,1) {};

\draw[b,dashed] (i3) -- (x) -- (w) -- (1);
\draw[b,dashed] (i1) -- (2) -- (i2) -- (3) -- (i3) -- (i4) -- (w);
\draw[b,dashed] (i2) -- (x);
\draw[b, line width=2pt] (x) -- (i1) -- (1);
\end{tikzpicture}
}
\hskip 1cm
\subfloat[Case 2.\label{fig:start-border-2}]{
\begin{tikzpicture}
\tikzstylemacro

\node[v] (x) at (0,0) {$x$};
\node[v, rectangle] (w) at (0,1) {$w$};

\node[v, rectangle] (1) at (1,1) {$1$};
\node[v] (2) at (1,-1) {$y$};
\node[v, regular polygon,regular polygon sides=3] (3) at (-1,-1) {$3$};

\node[v, rectangle] (i1) at (1,0) {};
\node[v] (i2) at (0,-1) {$z$};
\node[v, regular polygon,regular polygon sides=3] (i3) at (-1,0) {};

\node[v, regular polygon,regular polygon sides=3] (i4) at (-1,1) {};

\draw[b,dashed] (i3) -- (x) -- (w) -- (1) -- (i1) -- (2);
\draw[b,dashed] (i2) -- (3) -- (i3) -- (i4) -- (w);
\draw[b,dashed] (x) -- (i1);

\draw[b, line width=2pt] (x) -- (i2) -- (2);
\end{tikzpicture}
}
\hskip 1cm
\subfloat[Case 3.\label{fig:start-border-3}]{
\begin{tikzpicture}
\tikzstylemacro

\node[v] (x) at (0,0) {$x$};
\node[v, rectangle] (w) at (0,1) {$w$};

\node[v, rectangle] (1) at (1,1) {$1$};
\node[v, rectangle] (2) at (1,-1) {$2$};
\node[v] (3) at (-1,-1) {$y$};

\node[v, rectangle] (i1) at (1,0) {};
\node[v, rectangle] (i2) at (0,-1) {};
\node[v] (i3) at (-1,0) {$z$};

\node[v, regular polygon,regular polygon sides=3] (i4) at (-1,1) {};

\draw[b,dashed] (i3) -- (x) -- (w) -- (1) -- (i1) -- (2) -- (i2);
\draw[b,dashed] (i2) -- (3) -- (i3) -- (i4) -- (w);
\draw[b,dashed] (i2) -- (x) -- (i1);

\draw[b, line width=2pt] (x) -- (i3) -- (3);
\end{tikzpicture}
}
\caption{The first step in constructing the walk. 
%Up to rotation the graph, we can assume that $x$ and $w$ are in these positions.
The square vertices represent vertices not in the region of $x$, the circular ones are in the region while the triangular ones are undecided. The dashed edges are the edges of ${G^\infty}$ while the bold edges are the edges of the walk.}
\label{fig:start-border}
\end{figure}
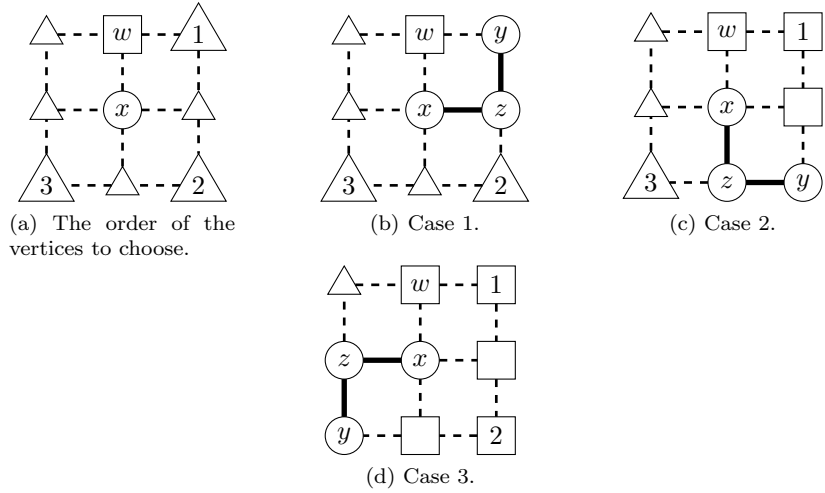

First pick an arbitrary vertex $x$ of $B$. The vertex $x$ is a border vertex thus there exists at least one vertex $w$ adjacent to $x$ which is not in $X$. In case there are more than one such vertex, we choose one of them arbitrarily. Up to rotation of the coordinate system, we can suppose $x = v_{i,j}$ and $w = v_{i,j+1}$. We will choose $y \in BN(x)$ according to the order in Figure~\ref{fig:start-border-gen}. Meaning the first vertex among $v_{i+1,j+1}$, $v_{i+1,j-1}$ and $v_{i-1,j-1}$ that belongs to $BN(x)$. The three cases are depicted in Figure~\ref{fig:start-border-1},~\ref{fig:start-border-2} and~\ref{fig:start-border-3}. Note that as $BN(x)$ is non-empty, $BN(x)$ has at least two vertices by Claim~\ref{claim:number-of-border-neighbors}, thus we are in at least one of the three cases above. Through the construction, the ``turn left'' property, which implies that the vectors $\overrightarrow{s_{2i}s_{2i+1}}$ and $\overrightarrow{s_{2i+1}s_{2i+2}}$ form a direct base, will be conserved.

Now that we have $x$ and $y$ we can start to construct our walk $W$, by taking $s_0 = x$, $s_1 = z$ and $s_2 = y$  where $z$ is the common neighbor of $x$ and $y$ which is in $X$.

\begin{figure}[!t]
\centering
\subfloat[The order of the vertices to choose.\label{fig:middle-border-gen}]{
\begin{tikzpicture}
\tikzstylemacro

\node[v] (x) at (0,0) {$x$};
\node[v, rectangle] (w) at (0,1) {$w$};

\node[v, regular polygon,regular polygon sides=3] (1) at (1,1) {$1$};
\node[v, regular polygon,regular polygon sides=3] (2) at (1,-1) {$2$};
\node[v, regular polygon,regular polygon sides=3] (3) at (-1,-1) {$3$};

\node[v, regular polygon,regular polygon sides=3] (i1) at (1,0) {};
\node[v, regular polygon,regular polygon sides=3] (i2) at (0,-1) {};
\node[v] (i3) at (-1,0) {$v$};

\node[v] (i4) at (-1,1) {$u$};

\draw[b,dashed] (x) -- (w) -- (1) -- (i1) -- (2) -- (i2) -- (3) -- (i3) ;
\draw[b,dashed] (i4) -- (w);
\draw[b,dashed] (i2) -- (x) -- (i1);
\draw[b, line width=2pt] (i4) -- (i3) -- (x);
\end{tikzpicture}
}
\hskip 1cm
\subfloat[Case 1.\label{fig:middle-border-1}]{
\begin{tikzpicture}
\tikzstylemacro

\node[v] (x) at (0,0) {$x$};
\node[v, rectangle] (w) at (0,1) {$w$};

\node[v] (1) at (1,1) {$y$};
\node[v, regular polygon,regular polygon sides=3] (2) at (1,-1) {$2$};
\node[v, regular polygon,regular polygon sides=3] (3) at (-1,-1) {$3$};

\node[v] (i1) at (1,0) {$z$};
\node[v, regular polygon,regular polygon sides=3] (i2) at (0,-1) {};
\node[v] (i3) at (-1,0) {$v$};

\node[v] (i4) at (-1,1) {$u$};

\draw[b,dashed] (x) -- (w) -- (1);
\draw[b,dashed] (i1) -- (2) -- (i2) -- (3) -- (i3);
\draw[b,dashed] (i4) -- (w);
\draw[b,dashed] (i2) -- (x);
\draw[b, line width=2pt] (x) -- (i1) -- (1);
\draw[b, line width=2pt] (i4) -- (i3) -- (x);
\end{tikzpicture}
}
\hskip 1cm
\subfloat[Case 2.\label{fig:middle-border-2}]{
\begin{tikzpicture}
\tikzstylemacro

\node[v] (x) at (0,0) {$x$};
\node[v, rectangle] (w) at (0,1) {$w$};

\node[v, rectangle] (1) at (1,1) {$1$};
\node[v] (2) at (1,-1) {$y$};
\node[v, regular polygon,regular polygon sides=3] (3) at (-1,-1) {$3$};

\node[v, rectangle] (i1) at (1,0) {};
\node[v] (i2) at (0,-1) {$z$};
\node[v] (i3) at (-1,0) {$v$};

\node[v] (i4) at (-1,1) {$u$};

\draw[b,dashed] (i3) -- (x) -- (w) -- (1) -- (i1) -- (2);
\draw[b,dashed] (i2) -- (3) -- (i3) -- (i4) -- (w);
\draw[b,dashed] (x) -- (i1);

\draw[b, line width=2pt] (x) -- (i2) -- (2);
\draw[b, line width=2pt] (i4) -- (i3) -- (x);
\end{tikzpicture}
}
\hskip 1cm
\subfloat[Case 3.\label{fig:middle-border-3}]{
\begin{tikzpicture}
\tikzstylemacro

\node[v] (x) at (0,0) {$x$};
\node[v, rectangle] (w) at (0,1) {$w$};

\node[v, rectangle] (1) at (1,1) {$1$};
\node[v, rectangle] (2) at (1,-1) {$2$};
\node[v] (3) at (-1,-1) {$y$};

\node[v, rectangle] (i1) at (1,0) {};
\node[v, rectangle] (i2) at (0,-1) {};
\node[v] (i3) at (-1,0) {$v,z$};

\node[v] (i4) at (-1,1) {$u$};

\draw[b,dashed]  (x) -- (w) -- (1) -- (i1) -- (2) -- (i2);
\draw[b,dashed] (i2) -- (3) -- (i3) -- (i4) -- (w);
\draw[b,dashed] (i2) -- (x) -- (i1);

\draw[b, line width=2pt] (i4) -- (i3) -- (3);
\draw (x) edge[b, line width=2pt, bend left] (i3) edge[b, line width=2pt, bend right] (i3) ;
\end{tikzpicture}
}
\caption{The next step in constructing the walk. We use the same notation as in Figure~\ref{fig:start-border}.
%Up to rotation the graph, we can assume that $x$ and $w$ are in these positions.
We constructed $s_{l-2} = u$, $s_{l-1} = v$, $s_l =x$. We then construct $s_{l+1} = z$ and $s_{l+2} = y$.}
\label{fig:middle-border}
\end{figure}
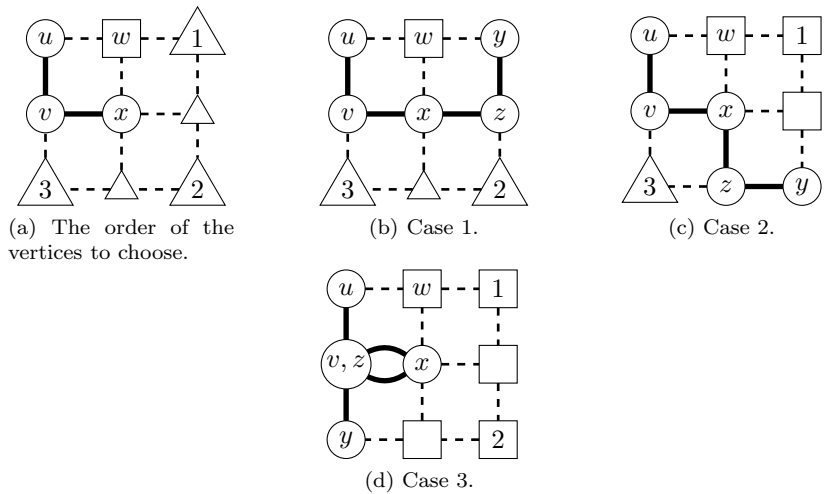

Suppose now that we have constructed the walk up to $s_0,\dots,s_{\ell-2},s_{\ell-1},s_{\ell}$ with $\ell$ even. If $s_{\ell-2} = s_0$, $s_{\ell-1} = s_1$ and $s_{\ell} = s_2$, then we stop and close this walk by removing the last two vertices. Otherwise we will construct $s_{\ell+1}$ and $s_{\ell+2}$. Suppose that $s_l = v_{i,j}$. Up to rotation of the coordinate system, we can suppose that $s_{\ell-2} = v_{i-1,j+1}$ and $s_{\ell-1} = v_{i-1,j}$. The vertex $s_{\ell-1}$ could in principle be $v_{i,j+1}$ but this would contradict  the ``turn left'' property. 
We construct $s_{\ell+2}$ as the first vertex among $v_{i+1,j+1}$, $v_{i+1,j-1}$ and $v_{i-1,j-1}$ that belong to $BN(x)$ (see Figure~\ref{fig:middle-border-gen}). The three cases are depicted in Figure~\ref{fig:middle-border-1},~\ref{fig:middle-border-2} and~\ref{fig:middle-border-3}. As before, since $BN(x)$ is non empty and of even cardinality, we are in one of those three cases. As in the first step, the vertex $s_{\ell+1}$ is the common neighbor of $s_{\ell}$ and $s_{\ell+2}$ in $X$.

If we stop and there are pairs of border neighbors that are not in the same walk, we can start the process again with this pair of vertices as the first and third vertices of the walk. To keep the assumption of the construction true, we must carefully choose the start vertex among these two in such a way that the ``turn left'' property is conserved.
\end{definition}

\begin{claim}
\label{claim:properties-border}
The construction of Definition~\ref{def:border-line} has the following properties:
\begin{enumerate}
\item the construction terminates and all walks are closed, \label{claim:properties-border-1}
\item the walks are of even length, \label{claim:properties-border-2}
\item all vertices with even indices have the same color, \label{claim:properties-border-3}
\item all vertices with odd indices have the same color which is different from the color of the vertices with even indices, \label{claim:properties-border-4}
\item all vertices of the border $B$ are vertices of some walk with even index, \label{claim:properties-border-5}
\item the number of occurrences of a vertex $x$ of the border $B$ in all the walks of $\mathcal W_B$  is given by $\abs{BN(x)} / 2$. \label{claim:properties-border-6}
\end{enumerate}
\end{claim}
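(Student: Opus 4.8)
The plan is to prove the six items not quite in the order stated, grouping them by the tools they require. I would first dispose of the colouring statements, items~\ref{claim:properties-border-3} and~\ref{claim:properties-border-4}, since they are immediate from the shape of the walk. By construction every even-indexed vertex $s_{2i}$ is either the initial border vertex $x$ or a vertex chosen from some $BN(\cdot)$, hence lies on the border $B$, while every odd-indexed vertex $s_{2i+1}$ is the common neighbour $z \in X$ of two consecutive border vertices. By Claim~\ref{claim:same-color-border} all vertices of $B$ share one colour, say $a$ (taking $X$ of type $ab$); and each odd-indexed vertex, lying in $X$ and adjacent to a vertex coloured $a$, must be coloured $b$. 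This settles items~\ref{claim:properties-border-3} and~\ref{claim:properties-border-4} simultaneously, the even and odd colours being distinct because $a \neq b$.

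The core of the argument is item~\ref{claim:properties-border-1}. The idea is that each elementary step of Definition~\ref{def:border-line} is governed by a \emph{deterministic and reversible} local rule, so that the construction is the orbit of a permutation of a finite set of states. I would take as the state of the process the ordered triple $(s_{\ell-2}, s_{\ell-1}, s_\ell)$ recording the last step. Going forward, the three cases of Figure~\ref{fig:middle-border} show that, given such a state, the pair $s_{\ell+1}, s_{\ell+2}$ is uniquely determined: the ``turn left'' rule, together with Claim~\ref{claim:flat-border} (which forbids the flat-border configuration and so excludes the ambiguous patterns) and Claim~\ref{claim:number-of-border-neighbors} (which guarantees $BN(\cdot) \neq \varnothing$, so that one of the three cases always applies), pins down a single successor. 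The same case analysis run backwards shows the predecessor state is equally unique, so the forward map is an injection on the finite set of states, hence a bijection; the orbit of the initial state $(s_0, s_1, s_2)$ is therefore a cycle returning to $(s_0, s_1, s_2)$, which is exactly the stopping condition. This proves that each walk terminates and is closed, and restarting with an as-yet-unused border-neighbour pair exhausts the finitely many such pairs, so the whole construction terminates.

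Item~\ref{claim:properties-border-2} then follows quickly: a closed walk is produced as $s_0, \dots, s_{\ell-2}$ with $s_{\ell-2} = s_0$ and $\ell$ even, so its length $\ell - 2$ is even (alternatively, a closed walk alternating between the two colour classes of items~\ref{claim:properties-border-3} and~\ref{claim:properties-border-4} must have even length). For items~\ref{claim:properties-border-5} and~\ref{claim:properties-border-6} I would recast the even-indexed subsequence of each walk as a closed trail in the auxiliary graph $\Gamma$ on vertex set $B$ in which $x$ and $y$ are joined whenever $y \in BN(x)$. Each step $s_\ell \to s_{\ell+2}$ traverses one edge of $\Gamma$, and the ``turn left'' rule acts at each vertex as a fixed-point-free pairing of the incident half-edges, matching the incoming edge to the outgoing one; the reversibility established for item~\ref{claim:properties-border-1} guarantees that each edge of $\Gamma$ is used exactly once across all walks, so $\mathcal{W}_B$ realises an Eulerian decomposition of $\Gamma$. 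Since each visit to $x$ consumes exactly two edges incident to $x$, the number of visits to $x$ equals half its degree, that is $\abs{BN(x)}/2$, which is item~\ref{claim:properties-border-6}; and because $\abs{BN(x)}$ is even and non-zero by Claim~\ref{claim:number-of-border-neighbors}, hence at least $2$, every vertex of $B$ is visited, giving item~\ref{claim:properties-border-5}.

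I expect the main obstacle to be the verification underlying item~\ref{claim:properties-border-1}: one must check, through the exhaustive case analysis of Figures~\ref{fig:start-border} and~\ref{fig:middle-border}, that at every step exactly one of the three local patterns occurs and that both the forward and the backward transitions are single-valued. This is precisely where Claims~\ref{claim:flat-border} and~\ref{claim:number-of-border-neighbors} are indispensable, the former to eliminate the configurations that would make the successor ambiguous, and the latter to ensure that the process is never stuck and that the half-edge pairing at each vertex is complete.
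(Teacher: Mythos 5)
Your proposal is correct and follows essentially the same route as the paper: items 3--4 via Claim~\ref{claim:same-color-border} and properness of the coloring, item 1 via the determinism and reversibility of the local ``turn left'' rule (the paper phrases this as an ``ultimately periodic but not periodic'' contradiction at a first merge point, which is exactly your injectivity-of-the-transition-map argument), item 2 via alternation between the two colors of $X$, and items 5--6 via the restart rule together with the fact that each occurrence of $x$ consumes two of its border neighbors. Your Eulerian-decomposition packaging of items 5--6 is just a clean reformulation of the paper's restart-plus-pigeonhole count, resting on the same implicit geometric fact both proofs use, namely that the turn-left convention selects exactly one direction of traversal for each border-neighbor pair $\set{x,y}$ through its unique common neighbor in $X$.
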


\begin{proof}
Suppose we do not terminate. As the number of possible edges is finite, the sequence we construct is ultimately periodic. Since $s_0$, $s_1$, $s_2$ do not appear consecutively in this order in the rest of the sequence, as we did not stop, the sequence is not periodic. Thus there exists a first moment at which there exist $i$ and $j$ such that $s_{i-2},s_{i-1},s_i,s_{i+1},s_{i+2}$ and $s_{j-2},s_{j-1},s_j,s_{j+1},s_{j+2}$ are subsequences of the sequence we constructed, verifying $s_{i-2} \neq s_{j-2}$, $s_{i-1} \neq s_{j-1}$, $s_{i} = s_{j}$, $s_{i+1} = s_{j+1}$ and $s_{i+2} = s_{j+2}$. 
Note that knowing $s_{i-2}$, $s_i$ and $s_{i+2}$ imposes the choices of $s_{i-1}$ and $s_{i+1}$ by the ``turn left'' property, this is why these two indices exist. 
%we cannot go in the other direction for the second sequence otherwise we would violate the ``turn left'' property. 
Without loss of generality, up to rotating the grid, we can assume that $s_i = v_{x,y}$ and $s_{i+2} = v_{x+1,y+1}$. By reversing the construction, we can observe that $s_{i-2}$ is the first border neighbor of $s_i$ among $v_{x-1,y+1}$, $v_{x-1,y-1}$ and $v_{x+1,y-1}$ in this order. In this case, $s_{j+2}$ and $s_{i-2}$ are uniquely determined by construction and are equal, a contradiction. 
Now the process terminates, thus the walks are closed by definition of the terminating condition. This proves~\ref{claim:properties-border-1}.

Since the walks are included in $X$ which is bipartite, they have even length which proves~\ref{claim:properties-border-2}.
In a similar way, all vertices with even indices are on the border $B$ of $X$ thus they have the same color by Claim~\ref{claim:same-color-border}, thus~\ref{claim:properties-border-3} is true. Thus all vertices with odd indices have the other color of $X$ which proves~\ref{claim:properties-border-4}.

We already saw that the vertices of even indices are on $B$. Suppose that $x$ is not part of a walk. We removed the case $\abs{BN(x)} = \varnothing$ by not considering those $B$'s thus there exists $y \in \abs{BN(x)}$. Then $x$ and $y$ are not in the same walk, thus we create a new one with these two vertices, a contradiction. This proves~\ref{claim:properties-border-5}.

Similarly,  if the number of occurrences is strictly smaller than $\abs{BN(x)} / 2$, we would have restarted the process in $x$. Now suppose that this number is strictly greater  than $\abs{BN(x)} / 2$. Then there exists a pair of border neighbors $x$ and $y$ that belong to two walks (and there is a vertex $z$ in between them in those two walks) by the pigeon hole principle. Since the construction of the walks only use the position of three consecutive vertices to decide the next two ones, the two walks are identical after  passing through $xzy$. By construction,  we can choose the start of the walks arbitrarily among the vertices of even indices by shifting the indices, thus we can consider that the two walks start by $xzy$. Thus the two walks are identical which cannot be the case as we would not have restarted to create the second walk.
\end{proof}

%\begin{claim}
%If $W \in \mathcal{W}_Y$ then all $a$'s are on the border. And all $W\in \mathcal{W}_Y$ are edge disjoint.
%\end{claim}
%\begin{proof}
%\note{TODO}
%\end{proof}

We define the set of closed walks $\mathcal{W}_a$ as the union of all closed walks $\mathcal{W}_B$ where $B$ is a border with color $a$.

Take $C$ to be a vertical or horizontal cycle of $P$. For the sake of simplicity, we will take $C$ to be the vertical cycle $\setcond{\overline{v_{x,y}}}{x = x_0 + n \ell,\ n \in \NN}$ where $\ell = 2p+1$ (\ie a $UC_{2q}$-layer). All the following definitions can be stated in the other case by symmetry.
%\note{def $C$ better}

Let $W$ be a closed walk in $P$ (\resp a representation of a closed walk of $P$ in ${G^\infty}$). We define a {\em positive crossing  of $C$ by $W$ in $P$} (\resp ${G^\infty}$) as a sub-walk $t_0,t_1,\dots,t_{k-1},t_k$ of $W$ (possibly going through the end of $W$ and going back at the beginning) such that $t_0 = \overline{v_{x_0-1,y}}$ (\resp $t_0 = v_{x_0 -1 + n \ell ,y}$ for $n\in \NN$) for some $y$, $t_i \in C$ for $i \in \set{1,\dots,k-1}$ and $t_k =  \overline{v_{x_0+1,y'}}$ (\resp $t_j = v_{x_0 +1 + n \ell ,y'}$ for $n\in \NN$) for some $y'$. The set of positive crossings $Cross^+_P(W,C)$ (\resp $Cross^+_{G^\infty}(W,C)$) is the set of all positive crossings of $C$ by $W$ in $P$ (\resp ${G^\infty}$).

We can similarly define a {\em negative crossing of $C$ by $W$ in $P$} (\resp ${G^\infty}$) by a sub-walk $t_0,t_1,\dots,t_{k-1},t_k$ of $W$ (possibly going through the end of $W$ and going back at the beginning) such that $t_0 = \overline{v_{x_0+1,y}}$ (\resp $t_0 = v_{x_0 +1 + n \ell ,y}$ for $n\in \NN$) for some $y$, $t_i \in C$ for $i \in \set{1,\dots,k-1}$ and $t_k =  \overline{v_{x_0-1,y'}}$ (\resp $t_j = v_{x_0 -1 + n \ell ,y'}$ for $n\in \NN$) for some $y'$. We note the corresponding set $Cross^-_P(W,C)$ (\resp $Cross^-_{G^\infty}(W,C)$).

\begin{claim}
\label{claim:cross-quotient}
If $W_{G^\infty}$ represents $W_P$, then
$$\abs{Cross^+_P(W_P,C)} = \abs{Cross^+_{G^\infty}(W_{G^\infty},C)}\text{ and }\abs{Cross^-_P(W_P,C)} = \abs{Cross^-_{G^\infty}(W_{G^\infty},C)}.$$ 
\end{claim}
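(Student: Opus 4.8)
The plan is to exploit the fact that the projection $v_{x,y} \mapsto \overline{v_{x,y}}$ from $G^\infty$ to $P$ preserves the notion of ``column''. For a vertex $v_{x,y}$ (of $G^\infty$ or of $P$), define its \emph{column} to be the residue $x \bmod \ell$, where $\ell = 2p+1$; this is well defined on $P$ precisely because the $x$-period of $Q$ is $\ell$, and the projection clearly preserves it. With this terminology $C$ is exactly the set of vertices of column $x_0$, the vertices immediately to its left have column $x_0 - 1$, and those immediately to its right have column $x_0 + 1$.

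The key observation I would establish is that the two definitions of a positive crossing express the \emph{same} local condition on columns. Unfolding the notation, a sub-walk occupying a given range of positions is a positive crossing in $P$ (\resp in $G^\infty$) if and only if its first vertex has column $x_0 - 1$, its last vertex has column $x_0 + 1$, and every vertex strictly in between has column $x_0$. Indeed, the requirement ``$t_0 = v_{x_0-1+n\ell,y}$ for some $n \in \NN$ and some $y$'' is literally the statement that $t_0$ has column $x_0 - 1$, and similarly for the exit vertex and for the interior vertices lying in $C$ (\ie of column $x_0$). The analogous characterisation holds for negative crossings, with the roles of columns $x_0 - 1$ and $x_0 + 1$ exchanged.

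From here the result is a direct bijection. Writing $W_P = t_0,\dots,t_N$ and $W_{G^\infty} = s_0,\dots,s_N$ with $\overline{s_m} = t_m$ for every position $m$, the vertices $s_m$ and $t_m$ have equal columns. Consequently a range of positions realises the positive-crossing column pattern along $W_P$ if and only if it realises it along $W_{G^\infty}$; the sets of position-ranges defining positive crossings in the two walks therefore coincide, which gives $\abs{Cross^+_P(W_P,C)} = \abs{Cross^+_{G^\infty}(W_{G^\infty},C)}$, and the same argument yields the equality for negative crossings. Since both crossing notions explicitly allow a crossing to ``go through the end of the walk and back to the beginning'', the wrap-around occurrences are matched symmetrically and require no separate treatment. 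I expect the only genuinely delicate point — and the one I would spell out in full — to be the verification that a crossing is determined by the range of positions it occupies (an entry from the left together with the maximal run inside $C$ forces the exit vertex, so that counting occurrences by position really counts crossings with multiplicity); everything else is immediate from column-preservation.
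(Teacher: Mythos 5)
Your proposal is correct and takes essentially the same approach as the paper: the paper also establishes a position-by-position correspondence of crossing sub-walks under the quotient map (quotienting a crossing of $W_{G^\infty}$ yields a crossing of $W_P$, and lifting a crossing of $W_P$ to the corresponding sub-walk of $W_{G^\infty}$ yields one back). Your ``column'' formalism simply makes explicit why both directions preserve the defining pattern, a point the paper leaves implicit.
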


\begin{proof}
We will only consider positive crossings, the proof for negative crossings is similar.
By taking the quotient of a sub-walk of $W_{G^\infty}$, we see that each crossing in ${G^\infty}$ is also present in $P$. Thus $\abs{Cross^+_P(W_P,C)} \supseteq \abs{Cross^+_{G^\infty}(W_{G^\infty},C)}$. Now take a crossing of $C$ by $W_P$ in $P$, it is a sub-walk of $W_P$. Thus if we take the corresponding sub-walk in $W_{G^\infty}$, we get a crossing in ${G^\infty}$. Thus the two sets are equal.
\end{proof}

One of our main results is the following proposition.

\begin{proposition}
\label{prop:sum-cross}
$$ \sum_{W \in \mathcal{W}_a} \abs{Cross^+_P(W,C)} + \abs{Cross^-_P(W,C)} \equiv \setcond{uv \in C}{\text{$uv$ has color $ac$ or $ad$}} \pmod 2.$$
\end{proposition}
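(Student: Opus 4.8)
The plan is to read both sides of the congruence as one mod~$2$ quantity, namely the parity of the number of transversal intersections of the family of closed walks $\mathcal{W}_a$ with the cycle $C$, and to compute that parity in two different ways. Throughout I would work with representations in ${G^\infty}$ and invoke Claim~\ref{claim:cross-quotient}, so that crossings may be counted on representatives where ``left of $C$'' and ``right of $C$'' are unambiguous.

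First I would treat the left-hand side. Each positive or negative crossing of $C$ by a walk $W\in\mathcal{W}_a$ is a maximal sub-walk entering the column of $C$ from one side and leaving on the other; the remaining visits of $W$ to that column are ``bounces'' that enter and leave on the same side. Cutting each $W$ into its maximal runs inside the column, a through-run uses exactly one horizontal edge joining the column of $C$ to the column immediately to its right, a bounce on that side uses two, and a bounce on the other side uses none. Summing over all runs of all walks, the total number of traversals of these straddling horizontal edges is congruent mod~$2$ to the number of through-runs, that is, to $\sum_{W\in\mathcal{W}_a}\left(\abs{Cross^+_P(W,C)}+\abs{Cross^-_P(W,C)}\right)$. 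Hence the left-hand side equals, mod~$2$, the number of transversal passages of $\mathcal{W}_a$ across $C$.

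Next I would identify this same parity with the right-hand side. An edge $uv$ of $C$ has colour $ac$ or $ad$ exactly when one endpoint is coloured $a$ and the other lies in a $cd$-region, i.e.\ precisely where $C$ leaves the $a$-part of the boundary of a region $X$ of type $ab$ and enters a region of type $cd$. The walks of $\mathcal{W}_a$ are built to run along these $a$-borders: by Claim~\ref{claim:number-of-border-neighbors} consecutive $a$-vertices of a walk are diagonal border neighbours sharing one common neighbour inside $X$ and one outside, and by Claim~\ref{claim:flat-border} no flat border occurs, so every relevant $4$-cycle is a $BC_4$ and the walk separates the $a$-vertices of $\partial X$ from the $c,d$-vertices outside $X$ rigidly. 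I would then argue that, travelling once around $C$, each passage from an $a$-vertex into a $cd$-region forces an odd number of transversal crossings of the surrounding $a$-border, and conversely; using the multiplicities from Claim~\ref{claim:properties-border} (each border vertex is met $\abs{BN(x)}/2$ times) to account for vertices of $C$ visited several times, the number of transversal crossings is congruent mod~$2$ to the number of $ac/ad$ edges of $C$. Combining the two computations yields the claimed congruence.

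The main obstacle is that this correspondence is genuinely global rather than local: a single crossing need not sit next to a single $ac/ad$ edge (an isolated $a$-vertex of $C$, for instance, contributes two $ac/ad$ edges but may be crossed only once), so one cannot match the two sides vertex by vertex and a compensating parity argument around the whole of $C$ is unavoidable. The delicate points are therefore to make precise, in purely combinatorial terms, that $\sum_{W\in\mathcal{W}_a}W$ represents the frontier between the $a$-vertices and the $cd$-regions, to verify that bounces and walk-edges lying along $C$ contribute evenly and hence do not affect the parity, and to handle correctly the column vertices that several walks, or the same walk, traverse more than once — for all of which the balancedness of the relevant $4$-cycles and the even-cardinality border structure are the essential tools.
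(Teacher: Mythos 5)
There is a genuine gap, and it sits exactly where you flagged it yourself: the step ``each passage from an $a$-vertex into a $cd$-region forces an odd number of transversal crossings of the surrounding $a$-border'' is not proved in your plan — it \emph{is} the proposition, merely restated — and the justification you sketch for it is unsound. The separation heuristic (``the walk separates the $a$-vertices of $\partial X$ from the $c,d$-vertices outside $X$ rigidly'') fails on the torus: a closed walk in $\mathcal W_a$ need not disconnect anything, since it may wind around the torus, and this non-separation is precisely why the whole machinery of turns $\tau_x,\tau_y$ exists in the first place. Moreover the per-passage claim is false as a local statement: an $a$-vertex of $C$ both of whose $C$-neighbors lie in $cd$-regions contributes \emph{two} $ac/ad$ edges and can be crossed \emph{zero} times (this is the configuration of case~4.a in Figure~\ref{fig:crossing-border}, and also the situation of a singleton region, which belongs to no walk at all). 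Your first computation, rewriting the number of crossings as the parity of traversals of the horizontal edges straddling the column of $C$, is correct but does not advance matters, because nothing in the proposal then connects those straddling-edge traversals to the colors $ac$ and $ad$; the burden of proof is simply moved to the unproven second step.

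It is worth noting that your stated ``main obstacle'' — that one cannot match the two sides vertex by vertex — is the opposite of how the paper proceeds. The paper's proof is a purely local parity match: for each occurrence of an $a$-vertex $x$ of $C$ in the walks (with multiplicity $\abs{BN(x)}/2$, by Claim~\ref{claim:properties-border}), an exhaustive case analysis over the possibilities $\abs{BN(x)}=4$ or $\abs{BN(x)}=2$ with the three choices of the next border neighbor (Figure~\ref{fig:crossing-border}) shows that the number of crossings through that occurrence is congruent modulo $2$ to the number of $ac/ad$ edges of $C$ incident to $x$ — your problematic example is exactly the case $0\equiv 2\pmod 2$, which matches in parity even though it does not match one-to-one. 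Two completeness facts then close the count: every crossing of $C$ by a walk of $\mathcal W_a$ contains an $a$-colored vertex of $C$ (because a $b$-vertex of a walk has its two walk-neighbors placed diagonally, so it cannot be the sole column vertex of a crossing), and every $ac/ad$ edge of $C$ is seen at its $a$-endpoint, which either lies on a border or forms a singleton region contributing two edges and no crossing. To repair your proposal you would have to carry out essentially this case analysis; the global route you outline has no workable substitute for it.
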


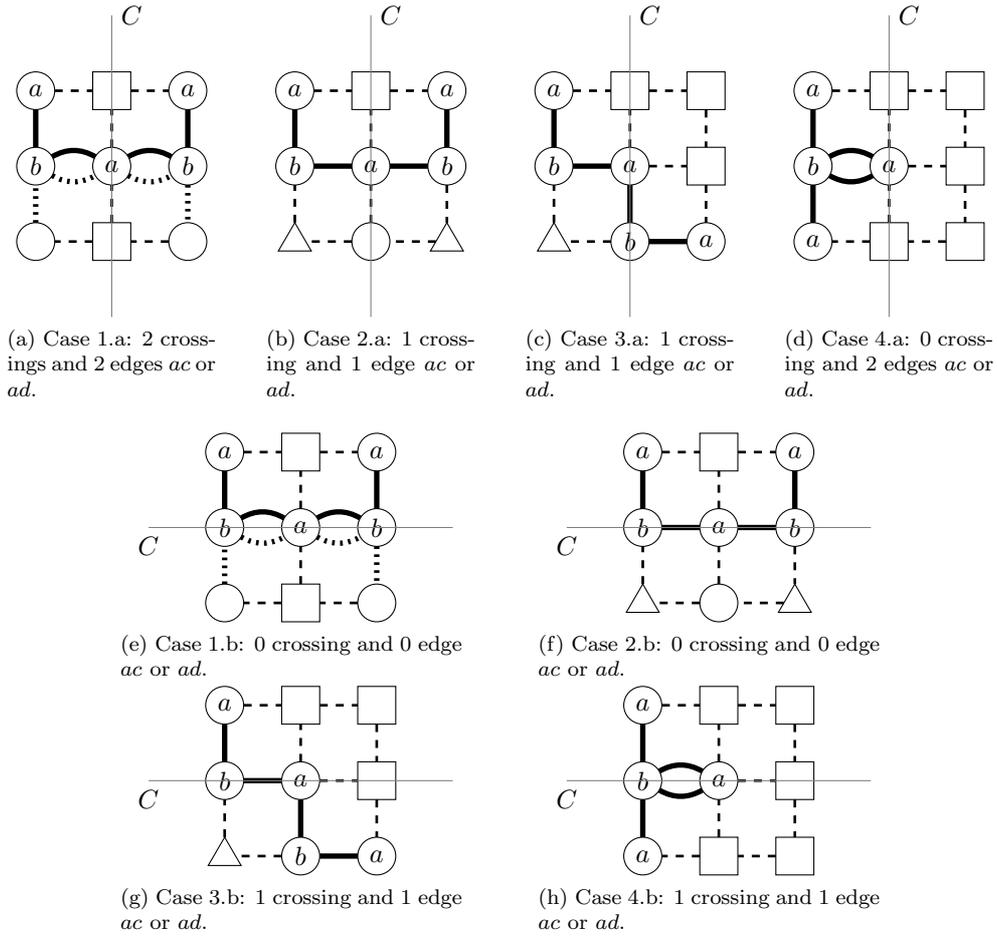
\begin{figure}[p]
\centering
\subfloat[Case 1.a: 2 crossings and 2 edges $ac$ or $ad$.\label{fig:crossing-border-0a}]{
\begin{tikzpicture}
\tikzstylemacro

\node[v] (x) at (0,0) {$a$};
\node[v, rectangle] (w) at (0,1) {};

\node[v] (1) at (1,1) {$a$};
\node[v] (2) at (1,-1) {};
\node[v] (3) at (-1,-1) {};

\node[v] (i1) at (1,0) {$b$};
\node[v, rectangle] (i2) at (0,-1) {};
\node[v] (i3) at (-1,0) {$b$};

\node[v] (i4) at (-1,1) {$a$};

\draw[b,dashed] (x) -- (w) -- (1);
\draw[b,dashed] (2) -- (i2) -- (3);
\draw[b,dashed] (i4) -- (w);
\draw[b,dashed] (i2) -- (x);

\draw[b, line width=2pt] (i1) -- (1);
\draw[b, line width=2pt] (i4) -- (i3);
\draw (x) edge[b, line width=2pt,bend left](i1);
\draw (i3) edge[b, line width=2pt, bend left] (x);

\draw[b, dotted, line width=2pt] (i1) -- (2);
\draw[b, dotted,line width=2pt] (3) -- (i3);
\draw (x) edge[b, dotted, line width=2pt,bend right](i1);
\draw (i3) edge[b, dotted, line width=2pt, bend right] (x);

\draw[gray] (0,2) -- (0,-2);
\coordinate[label=right:$C$] () at (0,2) {};
\end{tikzpicture}
}
\hskip 0.66cm
\subfloat[Case 2.a: 1 crossing and 1 edge $ac$ or $ad$.\label{fig:crossing-border-1a}]{
\begin{tikzpicture}
\tikzstylemacro

\node[v] (x) at (0,0) {$a$};
\node[v, rectangle] (w) at (0,1) {};

\node[v] (1) at (1,1) {$a$};
\node[v, regular polygon,regular polygon sides=3] (2) at (1,-1) {};
\node[v, regular polygon,regular polygon sides=3] (3) at (-1,-1) {};

\node[v] (i1) at (1,0) {$b$};
\node[v] (i2) at (0,-1) {};
\node[v] (i3) at (-1,0) {$b$};

\node[v] (i4) at (-1,1) {$a$};

\draw[b,dashed] (x) -- (w) -- (1);
\draw[b,dashed] (i1) -- (2) -- (i2) -- (3) -- (i3);
\draw[b,dashed] (i4) -- (w);
\draw[b,dashed] (i2) -- (x);
\draw[b, line width=2pt] (x) -- (i1) -- (1);
\draw[b, line width=2pt] (i4) -- (i3) -- (x);

\draw[gray] (0,2) -- (0,-2);
\coordinate[label=right:$C$] () at (0,2) {};
\end{tikzpicture}
}
\hskip 0.66cm
\subfloat[Case 3.a: 1 crossing and 1 edge $ac$ or $ad$.\label{fig:crossing-border-2a}]{
\begin{tikzpicture}
\tikzstylemacro

\node[v] (x) at (0,0) {$a$};
\node[v, rectangle] (w) at (0,1) {};

\node[v, rectangle] (1) at (1,1) {};
\node[v] (2) at (1,-1) {$a$};
\node[v, regular polygon,regular polygon sides=3] (3) at (-1,-1) {};

\node[v, rectangle] (i1) at (1,0) {};
\node[v] (i2) at (0,-1) {$b$};
\node[v] (i3) at (-1,0) {$b$};

\node[v] (i4) at (-1,1) {$a$};

\draw[b,dashed] (i3) -- (x) -- (w) -- (1) -- (i1) -- (2);
\draw[b,dashed] (i2) -- (3) -- (i3) -- (i4) -- (w);
\draw[b,dashed] (x) -- (i1);

\draw[b, line width=2pt] (x) -- (i2) -- (2);
\draw[b, line width=2pt] (i4) -- (i3) -- (x);

\draw[gray] (0,2) -- (0,-2);
\coordinate[label=right:$C$] () at (0,2) {};
\end{tikzpicture}
}
\hskip 0.66cm
\subfloat[Case 4.a: 0 crossing and 2 edges $ac$ or $ad$.\label{fig:crossing-border-3a}]{
\begin{tikzpicture}
\tikzstylemacro

\node[v] (x) at (0,0) {$a$};
\node[v, rectangle] (w) at (0,1) {};

\node[v, rectangle] (1) at (1,1) {};
\node[v, rectangle] (2) at (1,-1) {};
\node[v] (3) at (-1,-1) {$a$};

\node[v, rectangle] (i1) at (1,0) {};
\node[v, rectangle] (i2) at (0,-1) {};
\node[v] (i3) at (-1,0) {$b$};

\node[v] (i4) at (-1,1) {$a$};

\draw[b,dashed]  (x) -- (w) -- (1) -- (i1) -- (2) -- (i2);
\draw[b,dashed] (i2) -- (3) -- (i3) -- (i4) -- (w);
\draw[b,dashed] (i2) -- (x) -- (i1);

\draw[b, line width=2pt] (i4) -- (i3) -- (3);
\draw (x) edge[b, line width=2pt, bend left] (i3) edge[b, line width=2pt, bend right] (i3) ;

\draw[gray] (0,2) -- (0,-2);
\coordinate[label=right:$C$] () at (0,2) {};
\end{tikzpicture}
}

\subfloat[Case 1.b: 0 crossing and 0 edge $ac$ or $ad$.\label{fig:crossing-border-0b}]{
\begin{tikzpicture}
\tikzstylemacro

\node[v] (x) at (0,0) {$a$};
\node[v, rectangle] (w) at (0,1) {};

\node[v] (1) at (1,1) {$a$};
\node[v] (2) at (1,-1) {};
\node[v] (3) at (-1,-1) {};

\node[v] (i1) at (1,0) {$b$};
\node[v, rectangle] (i2) at (0,-1) {};
\node[v] (i3) at (-1,0) {$b$};

\node[v] (i4) at (-1,1) {$a$};

\draw[b,dashed] (x) -- (w) -- (1);
\draw[b,dashed] (2) -- (i2) -- (3);
\draw[b,dashed] (i4) -- (w);
\draw[b,dashed] (i2) -- (x);

\draw[b, line width=2pt] (i1) -- (1);
\draw[b, line width=2pt] (i4) -- (i3);
\draw (x) edge[b, line width=2pt,bend left](i1);
\draw (i3) edge[b, line width=2pt, bend left] (x);

\draw[b, dotted, line width=2pt] (i1) -- (2);
\draw[b, dotted,line width=2pt] (3) -- (i3);
\draw (x) edge[b, dotted, line width=2pt,bend right](i1);
\draw (i3) edge[b, dotted, line width=2pt, bend right] (x);

\draw[gray] (-2,0) -- (2,0);
\coordinate[label=below:$C$] () at (-2,0) {};
\end{tikzpicture}
}
\hskip 1cm
\subfloat[Case 2.b: 0 crossing and 0 edge $ac$ or $ad$.\label{fig:crossing-border-1b}]{
\begin{tikzpicture}
\tikzstylemacro

\node[v] (x) at (0,0) {$a$};
\node[v, rectangle] (w) at (0,1) {};

\node[v] (1) at (1,1) {$a$};
\node[v, regular polygon,regular polygon sides=3] (2) at (1,-1) {};
\node[v, regular polygon,regular polygon sides=3] (3) at (-1,-1) {};

\node[v] (i1) at (1,0) {$b$};
\node[v] (i2) at (0,-1) {};
\node[v] (i3) at (-1,0) {$b$};

\node[v] (i4) at (-1,1) {$a$};

\draw[b,dashed] (x) -- (w) -- (1);
\draw[b,dashed] (i1) -- (2) -- (i2) -- (3) -- (i3);
\draw[b,dashed] (i4) -- (w);
\draw[b,dashed] (i2) -- (x);
\draw[b, line width=2pt] (x) -- (i1) -- (1);
\draw[b, line width=2pt] (i4) -- (i3) -- (x);

\draw[gray] (-2,0) -- (2,0);
\coordinate[label=below:$C$] () at (-2,0) {};
\end{tikzpicture}
}
\hskip 1cm
\subfloat[Case 3.b: 1 crossing and 1 edge $ac$ or $ad$.\label{fig:crossing-border-2b}]{
\begin{tikzpicture}
\tikzstylemacro

\node[v] (x) at (0,0) {$a$};
\node[v, rectangle] (w) at (0,1) {};

\node[v, rectangle] (1) at (1,1) {};
\node[v] (2) at (1,-1) {$a$};
\node[v, regular polygon,regular polygon sides=3] (3) at (-1,-1) {};

\node[v, rectangle] (i1) at (1,0) {};
\node[v] (i2) at (0,-1) {$b$};
\node[v] (i3) at (-1,0) {$b$};

\node[v] (i4) at (-1,1) {$a$};

\draw[b,dashed] (i3) -- (x) -- (w) -- (1) -- (i1) -- (2);
\draw[b,dashed] (i2) -- (3) -- (i3) -- (i4) -- (w);
\draw[b,dashed] (x) -- (i1);

\draw[b, line width=2pt] (x) -- (i2) -- (2);
\draw[b, line width=2pt] (i4) -- (i3) -- (x);

\draw[gray] (-2,0) -- (2,0);
\coordinate[label=below:$C$] () at (-2,0) {};
\end{tikzpicture}
}
\hskip 1cm
\subfloat[Case 4.b: 1 crossing and 1 edge $ac$ or $ad$.\label{fig:crossing-border-3b}]{
\begin{tikzpicture}
\tikzstylemacro

\node[v] (x) at (0,0) {$a$};
\node[v, rectangle] (w) at (0,1) {};

\node[v, rectangle] (1) at (1,1) {};
\node[v, rectangle] (2) at (1,-1) {};
\node[v] (3) at (-1,-1) {$a$};

\node[v, rectangle] (i1) at (1,0) {};
\node[v, rectangle] (i2) at (0,-1) {};
\node[v] (i3) at (-1,0) {$b$};

\node[v] (i4) at (-1,1) {$a$};

\draw[b,dashed]  (x) -- (w) -- (1) -- (i1) -- (2) -- (i2);
\draw[b,dashed] (i2) -- (3) -- (i3) -- (i4) -- (w);
\draw[b,dashed] (i2) -- (x) -- (i1);

\draw[b, line width=2pt] (i4) -- (i3) -- (3);
\draw (x) edge[b, line width=2pt, bend left] (i3) edge[b, line width=2pt, bend right] (i3) ;

\draw[gray] (-2,0) -- (2,0);
\coordinate[label=below:$C$] () at (-2,0) {};
\end{tikzpicture}
}
\caption{All cases for the central vertex to belong to a closed walk $W \in \mathcal W_a$. We use the same notation as in Figure~\ref{fig:start-border}. The dotted lines in sub-figures~\ref{fig:crossing-border-0a} and~\ref{fig:crossing-border-0b} are a second passage in the central vertex by a walk in $\mathcal W_a$ (possibly the same as the bold line). The edges $ac$ or $ad$ are the edges between a circular vertex and a square vertex. For each case we count the number of crossings of the drawn walks and the number of edges of color $ac$ or $ad$ incident to the central vertex and belonging to the cycle $C$.}
\label{fig:crossing-border}
\end{figure}

\begin{proof}
Take a vertex $x$ of $C$ in $(P,\pi')$ colored $a$. If the region of $x$ is  $\set{x}$, then $x$ has two incident edges colored $ac$ or $ad$ and $x$ is not contained in any crossing as it does not belong to a walk in $\mathcal W_a$ by definition. Thus we can ignore them.

If $x$ has at least one incident edge colored $ac$ or $ad$, then it belongs to some border colored $a$.

Now take a vertex $x$ of color $a$ in some walk $W \in \mathcal W_a$. Depending on the size of $BN(x)$ there are one or two occurrences of $x$ in $\mathcal W_a$ by Claim~\ref{claim:properties-border}. Up to rotation we can suppose that we have $v_{i-1,j+1},v_{i-1,j},v_{i,j}=x$ as a sub-walk of $W$. Depending on the orientation of $C$ (vertical or horizontal), for each sub-case, we must consider the two orientations. For one orientation there are four sub-cases: $\abs{BN(x)} = 4$, $\abs{BN(x)} = 2$ and we chose $v_{i+1,j+1}$ during the construction, $\abs{BN(x)} = 2$ and we chose $v_{i+1,j-1}$ during the construction or $\abs{BN(x)} = 2$ and we chose $v_{i-1,j-1}$ during the construction. All the sub-cases are depicted in Figure~\ref{fig:crossing-border}. In each case the number of crossings for the sub-walks considered is equal, modulo $2$, to the number of edges colored $ab$ of $x$ in $C$.

Now note that no vertices of color $b$ in $W \in W_a$ has both neighbors in the same layer. Thus a crossing of $C$ by $W$ always contains a vertex colored $a$ of $C$. Thus all crossings are counted in the above case analysis.

Since for each edge colored $ac$ or $ad$, the vertex colored $a$ has a neighbor not in its region, it is on some border and thus we counted those edges in the case analysis or when we treated the case of the region of size one.

Thus the number of edges colored $ac$ or $ad$ in $C$ is equal to the sum of the number of crossings of $C$ by walks in $\mathcal W_a$ modulo $2$.
\end{proof}

\begin{claim}
\label{claim:ac-ad-ab}
The number of edges colored $ac$ or $ad$ in $C$ is equal to the number of edges of $C$ colored $ab$ modulo $2$.
\end{claim}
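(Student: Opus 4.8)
The plan is to prove this by a parity double-counting argument over the vertices of $C$ colored $a$, using only two facts: that $C$ is a cycle (so every vertex of $C$ has exactly two neighbours in $C$) and that the map $\varphi$ is a proper coloring of $(P,\pi')$ (so no vertex colored $a$ has an $a$-colored neighbour). The signs play no role in this claim; it is purely a statement about the vertex-coloring $\varphi$, where the ``color'' of an edge is the unordered pair of colors of its endpoints.

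Concretely, first I would fix a vertex $v$ of $C$ with $\varphi(v) = a$ and examine its two neighbours along $C$. Each of these neighbours is colored in $\set{b,c,d}$, since it cannot also be colored $a$. Let $\beta(v)$ denote the number of these two neighbours colored $b$, and $\gamma(v)$ the number colored $c$ or $d$. Then $\beta(v) + \gamma(v) = 2$, and in particular $\gamma(v) \equiv \beta(v) \pmod 2$. This local identity is the heart of the argument.

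Next I would sum this congruence over all vertices $v$ of $C$ with $\varphi(v) = a$. Every edge of $C$ colored $ab$ has exactly one endpoint colored $a$, so $\sum_v \beta(v)$ counts each such edge exactly once and therefore equals the number of $ab$-edges of $C$. Likewise every edge of $C$ colored $ac$ or $ad$ has exactly one endpoint colored $a$, so $\sum_v \gamma(v)$ equals the number of such edges. Summing $\gamma(v) \equiv \beta(v) \pmod 2$ over all $a$-vertices then yields that the number of $ac$- or $ad$-edges of $C$ is congruent modulo $2$ to the number of $ab$-edges of $C$, which is exactly the claim.

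I do not expect a serious obstacle: the only care needed is with degenerate cases, which the argument handles automatically. If $C$ contains no vertex colored $a$, both counts are $0$ and the congruence is trivial, and the argument never requires $C$ to contain vertices of every color. The one point worth stating cleanly is the bookkeeping that each relevant edge is counted exactly once at its unique $a$-endpoint, which is immediate from properness.
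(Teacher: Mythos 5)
Your proof is correct and is essentially the paper's own argument: the paper also double-counts over the vertices of $C$ colored $a$, using that each such vertex has exactly two incident edges in $C$ (so its number of $ac$/$ad$-edges is $2$ minus its number of $ab$-edges, hence congruent mod $2$) and that each $ab$-, $ac$- or $ad$-edge of $C$ is counted exactly once at its unique $a$-endpoint. Your local identity $\gamma(v) \equiv \beta(v) \pmod 2$ is precisely the step $\abs{(E_{ac} \cup E_{ad}) \cap N(x) \cap C} \equiv \abs{E_{ab} \cap N(x) \cap C} \pmod 2$ in the paper's chain of congruences.
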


\begin{proof}
Let us call $E_{ac}$ (\resp $E_{ad}$, \resp $E_{ab}$) the set of edges colored $ac$ (\resp $ad$, \resp $ab$). %Similarly, let $E_{ad}$ and $E_{ab}$. 
Since a vertex of color $a$ has two incident edges in $C$, we have:
\begin{align*}
\abs{(E_{ac} \cup E_{ad}) \cap C} &\equiv \sum_{x\in C \text{ of color a}} \abs{(E_{ac} \cup E_{ad}) \cap N(x) \cap C} \pmod 2\\
 &\equiv \sum_{x\in C \text{ of color a}} \deg_C(x) - \abs{E_{ab} \cap N(x) \cap C} \pmod 2 \\
 &\equiv \sum_{x\in C \text{ of color a}} 2 - \abs{E_{ab} \cap N(x) \cap C} \pmod 2 \\
 &\equiv \sum_{x\in C \text{ of color a}} \abs{E_{ab} \cap N(x) \cap C} \pmod 2 \\
 &\equiv \abs{E_{ab} \cap C} \pmod 2.
\end{align*}
\end{proof}

\subsection{Crossings and turns}
\label{sec:crossings-turns}

In this section, we will suppose that $C$ is the vertical cycle of $(P,\pi')$ equal to $\setcond{\overline{v_{x,y}}}{x = x_0 + n\ell,\ n \in \NN}$ for $\ell = 2p+1$. We identify $C$ on $(P,\pi')$ and the set $\setcond{v_{x,y}}{\overline{v_{x,y}} \in C}$ of vertices of ${G^\infty}$. All what is defined below also works for a horizontal cycle with $\ell =q$. Here we want to connect the number of crossings of the previous section with the number of turns of Section~\ref{sec:turns}.

\begin{definition}
Let $v_{x,y}$ be a vertex of ${G^\infty}$. We define the function $g_C$  as follows:
$$ g_C(v_{x,y}) = \floor{\frac{x-x_0}{\ell}}.$$

For a walk $W_{G^\infty}$ in ${G^\infty}$ starting at $s_0$ and finishing at $s_n$, we define $f_C$ as follows:
$$ f_C(W_{G^\infty}) = g_C(s_n) -g_C(s_0).$$
\end{definition}

\begin{claim}
\label{claim:fc-cross}
For a walk $W_{G^\infty}$ of ${G^\infty}$ representing a closed walk $W_P$ of $P$ with starting point $v_{x,y} \notin C$:
$$ f_C(W_{G^\infty}) \equiv \abs{Cross^+_{G^\infty}(W_{G^\infty},C)} - \abs{Cross^-_{G^\infty}(W_{G^\infty},C)}\pmod 2 .$$
\end{claim}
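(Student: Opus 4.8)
The plan is to read $g_C$ as a height (or potential) function counting how many of the equally spaced walls of $C$ lie to the left of a vertex, and to track how this height changes along $W_{G^\infty}$. In $G^\infty$ the set $C$ is the union of the walls $\{x = x_0 + n\ell : n \in \ZZ\}$ with $\ell = 2p+1$, and $g_C(v_{x,y}) = \lfloor (x-x_0)/\ell\rfloor$ is constant on each open strip between two consecutive walls. Writing $W_{G^\infty} = s_0,\dots,s_n$, the quantity $f_C(W_{G^\infty}) = g_C(s_n) - g_C(s_0)$ telescopes as $\sum_i [g_C(s_{i+1}) - g_C(s_i)]$, and a single directed edge changes $g_C$ only when it is horizontal and steps across the \emph{left} side of a wall: a rightward step onto a wall vertex contributes $+1$, a leftward step off a wall vertex contributes $-1$, and every other edge contributes $0$.

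First I would record how $W_{G^\infty}$ can meet $C$. Because distinct walls are spaced $\ell = 2p+1$ apart and hence more than one edge apart, once the walk is on a wall it can only move vertically along that same wall; it must therefore both enter and leave each wall through a horizontal edge, coming from or going to either the left neighbour (which sits in the strip of height $n-1$) or the right neighbour (height $n$). The hypothesis $s_0 = v_{x,y} \notin C$ is essential here: since $C$ is a union of $Q$-classes, the endpoint $s_n$, being $Q$-equivalent to $s_0$, also lies off $C$, so the walk neither starts nor ends on a wall. Consequently its intersection with $C$ splits into finitely many maximal \emph{visits}, each a complete episode bounded by a horizontal entry edge and a horizontal exit edge.

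Next I would classify each visit by its (entry side, exit side) pair into the four types $LR$, $RL$, $LL$, $RR$, and observe that by the very definition of a crossing the $LR$ visits are exactly the positive crossings and the $RL$ visits exactly the negative crossings. Evaluating $g_C$ at the vertices immediately before and after a visit — using $g_C = n-1$ on the left neighbour of wall $n$, and $g_C = n$ on that wall and on its right neighbour — shows that the net change of $g_C$ across the episode is $+1$ for $LR$, $-1$ for $RL$, and $0$ for both $LL$ and $RR$. Since $g_C$ is constant on the off-$C$ stretches between consecutive visits, summing these episode contributions telescopes back to $g_C(s_n) - g_C(s_0) = f_C(W_{G^\infty})$. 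This yields the exact identity $f_C(W_{G^\infty}) = \abs{Cross^+_{G^\infty}(W_{G^\infty},C)} - \abs{Cross^-_{G^\infty}(W_{G^\infty},C)}$, which in particular gives the asserted congruence modulo $2$.

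The bookkeeping of entry and exit edges is where the care is needed, and it is the main obstacle: one must verify that \emph{every} change of $g_C$ happens at a wall-visit boundary — equivalently, that the walk cannot jump between two non-adjacent strips without landing on $C$ — and that each visit is entered and exited exactly once, so that no crossing is double counted and no change of $g_C$ is left unaccounted for. The assumption that the starting vertex lies off $C$ is precisely what rules out a dangling half-visit at the seam where the closed walk wraps around, which would otherwise break the telescoping.
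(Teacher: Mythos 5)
Your proof is correct and takes essentially the same route as the paper: both treat $g_C$ as a potential that telescopes along the walk, decompose the walk into wall-visits versus off-wall segments, and show that positive and negative crossings contribute $+1$ and $-1$ while everything else (your $LL$/$RR$ visits, the paper's non-crossing pieces $W_i$) contributes $0$, with the hypothesis $s_0 \notin C$ ruling out a visit wrapping around the seam. Your explicit four-type classification of visits and the resulting exact integer identity $f_C(W_{G^\infty}) = \abs{Cross^+_{G^\infty}(W_{G^\infty},C)} - \abs{Cross^-_{G^\infty}(W_{G^\infty},C)}$ are just a slightly more detailed rendering of the paper's argument, which yields the same equality before reducing modulo $2$.
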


\begin{proof}
%Suppose that $W_{G^\infty}$ is a representation of $W_T$. Up to shifting the indices, we can suppose that $s_0 \notin C$ (otherwise $W_{G^\infty} \subseteq C$ and the result is immediate). 
Suppose $W_{G^\infty} = (s_i)_{i \in \set{0,\dots,n}}$, by assumption $s_0 \notin C$.
This ensures that all crossings of $C$ by $W_{G^\infty}$ are sub-walks that do not go through the end of $W_{G^\infty}$ and go back at the beginning. Take a crossing $t_0,\dots,t_k$. We have $g_C(t_k) - g_C(t_0) = 1$ if the crossing is positive and $g_C(t_k) - g_C(t_0) = -1$ if it is negative.

Now we just have to show that the other sub-walks of $W_{G^\infty}$ do not contribute to $f_C(W_{G^\infty})$.  We can write $W_{G^\infty} = W_0,W^{cross}_0,W_1,\dots,W^{cross}_{k-1},W_k$ for some integer $k$ where each $W^{cross}_i$ is a crossing and the other sub-walks are not. 
Note that: $$f_C(W_{G^\infty}) = \sum\limits_{i \in \set{0,\dots,k}} f_C(W_i) + \sum\limits_{i \in \set{0,\dots,k-1}} f_C(W^{cross}_i).$$

If for all $i \in \set{0,\dots,k}$, $f_C(W_i) = 0$, we have our result.
Since the endpoints of the $W_i$'s are the same as the starting points of the crossings, we know that they do not belong to $C$. The same is true for the starting points. Then, for the starting point $v_{x,y} $ and  the endpoint $v_{z,t}$, we have $x,z \in \set{x_0 + n \ell + 1, \dots,  x_0 + n \ell + l -1}$ for some $n$. But in all cases the value of $g_C$ is $n$. Thus $f_C(W_i) = 0$. This concludes the proof.
\end{proof}

\begin{claim}
\label{claim:fc-tau}
For a closed walk $W_P$ in $P$ and $W_{G^\infty}$ a representation of $W_P$ on ${G^\infty}$:

$$ f_C(W_{G^\infty}) \equiv \tau_x(W_P) \pmod 2 .$$
\end{claim}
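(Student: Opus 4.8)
The plan is to compute both sides directly from the definitions and observe that they agree up to a sign that is invisible modulo $2$. First I would name the endpoints: let $W_{G^\infty}$ start at $v_{x,y}$ and end at $v_{z,t}$. Since $W_P$ is a \emph{closed} walk in $P = \faktor{{G^\infty}}{Q}$, its two endpoints represent the same vertex, so $\overline{v_{x,y}} = \overline{v_{z,t}}$. By definition of the quotient by $Q = \ZZ_{2p+1}\times\ZZ_q$, this forces $z-x$ to be a multiple of $\ell = 2p+1$; write $z = x + n\ell$ with $n \in \ZZ$.

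The key step is then a one-line evaluation of $f_C$. By definition,
$$ f_C(W_{G^\infty}) = g_C(v_{z,t}) - g_C(v_{x,y}) = \floor{\frac{z-x_0}{\ell}} - \floor{\frac{x-x_0}{\ell}}. $$
Substituting $z = x + n\ell$ and using the elementary identity $\floor{a+n} = \floor{a} + n$, valid for every integer $n$, the two floor terms telescope and I obtain $f_C(W_{G^\infty}) = n$.

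On the other side, by definition of the number of horizontal turns, $\tau_x(W_P) = \abs{\frac{z-x}{\ell}} = \abs{n}$. Since an integer and its absolute value always have the same parity, $n \equiv \abs{n} \pmod 2$, and therefore $f_C(W_{G^\infty}) = n \equiv \abs{n} = \tau_x(W_P) \pmod 2$, which is the claim.

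There is no genuine obstacle here: the only subtlety worth flagging is that $f_C$ is a \emph{signed} quantity recording the net horizontal displacement across $C$, whereas $\tau_x$ is its absolute value. This sign discrepancy is precisely what is erased by passing to parity. I would also remark that, unlike Claim~\ref{claim:fc-cross}, this computation uses neither the hypothesis $v_{x,y}\notin C$ nor the crossing count, and it is insensitive to the chosen representation (shifting the representation by a multiple of $\ell$ shifts $g_C$ of both endpoints equally), consistent with $\tau_x(W_P)$ being well defined.
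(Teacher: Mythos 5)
Your proof is correct and follows essentially the same route as the paper's: both write $z = x + n\ell$ for the endpoints of the representation, evaluate $f_C(W_{G^\infty}) = \floor{\frac{z-x_0}{\ell}} - \floor{\frac{x-x_0}{\ell}} = n$ via the floor identity, and observe that $\tau_x(W_P) = \abs{n} \equiv n \pmod 2$. Your write-up merely makes explicit two points the paper leaves implicit, namely why $z-x$ is a multiple of $\ell$ (closedness of $W_P$ in the quotient) and the sign discrepancy between $f_C$ and $\tau_x$ being erased by parity.
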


\begin{proof}
Suppose that $W_{G^\infty}$ starts at $v_{x,y}$ and ends at $v_{z,t}$. Note that $z = x + n\ell$ for some $n \in \ZZ$.
We have:
\begin{align*}
 \tau_x(W_P) &\equiv  \abs{\frac{z-x}{\ell}} \pmod 2\\
 &\equiv n \pmod 2,\\
\end{align*}
while:
\begin{align*}
 f_C(W_{G^\infty}) &\equiv  \floor{\frac{z-x_0}{\ell}} - \floor{\frac{x-x_0}{\ell}} \pmod 2\\
 &\equiv n + \floor{\frac{x-x_0}{\ell}} - \floor{\frac{x-x_0}{\ell}} \pmod 2\\
 & \equiv n \pmod 2.
\end{align*}
\end{proof}

\subsection{End of the proof}
\label{sec:proof-ending}

We can now prove Lemma~\ref{lem:ProdCyclePas4}.

\begin{proof}
Note that by shifting the indices, we can suppose that the starting vertex  of each $W_P \in \mathcal W_a$ does not belong to $C$.
By using Claim~\ref{claim:ac-ad-ab}, Proposition~\ref{prop:sum-cross}, Claim~\ref{claim:cross-quotient}, Claim~\ref{claim:fc-cross} and Claim~\ref{claim:fc-tau}, in this order, we get:
\begin{align*}
\abs{E_{ab} \cap C} & \equiv \sum_{W_P \in \mathcal{W}_a} \abs{Cross^+_P(W_P,C)} + \abs{Cross^-_P(W_P,C)} \pmod 2\\
& \equiv \sum_{\substack{W_P \in \mathcal{W}_a \\ W_{G^\infty}\text{ represents $W_P$ and}\\ \text{its starting point $\notin C$}}} \abs{Cross^+_{G^\infty}(W_{G^\infty},C)} - \abs{Cross^-_{G^\infty}(W_{G^\infty},C)} \pmod 2\\
& \equiv \sum_{\substack{W_P \in \mathcal{W}_a \\ W_{G^\infty}\text{ represents $W_P$ and}\\ \text{its starting point $\notin C$}}} f_C(W_{G^\infty}) \pmod 2\\
& \equiv \sum_{W_P \in \mathcal{W}_a} \tau_x(W_P) \pmod 2.\\
\end{align*}

By the choice of $C$ in the previous subsection, $C = UC_q$ and thus $\abs{E_{ab} \cap C} \equiv 1 \pmod 2$. Therefore:
$$ 1 \equiv \sum_{W_P \in \mathcal{W}_a} \tau_x(W_P) \pmod 2.$$

By taking $C = BC_{2p+1}$, a horizontal cycle, we obtain:
\begin{align*}
0 \equiv \abs{E_{ab} \cap C}  \equiv \sum_{W \in \mathcal{W}_a} \tau_y(W) \pmod 2.
\end{align*}

Recall that Proposition~\ref{prop:sum-tau} states that:
$$ 0 \equiv q \tau_y(W) + \tau_x(W) \pmod 2.$$

Thus:
$$ 0 \equiv q \times 0 + 1 \pmod 2.$$

This is a contradiction.
\end{proof}

This concludes the proof of Lemma~\ref{lem:ProdCyclePas4}.

\section{Conclusion}
\label{sec:cartesian:conclu}

To conclude, in this paper, we showed a number of results on Cartesian products of signed graphs.
We proved some algebraic properties: Theorem \ref{thm:cartesianHomCompatibility}, Theorem \ref{thm:primefactortheorem} and Theorem \ref{thm:cancellation-property}. We also presented an optimal algorithm to decompose a signed graph into its factors in time $O(m)$. 

Finally, we computed the chromatic number of Cartesian products: Cartesian products of any graph  by a signed forest, Cartesian  products of signed paths, signed graphs with underlying graph $P_n \ssquare P_m$, Cartesian products of some signed complete graphs and Cartesian products of signed cycles. We also presented a tool called an $s$-redundant set that helped  to compute chromatic numbers of signed graphs.
 It would be interesting to determine the exact 
 chromatic number of a signed grid. In this paper, we only presented an upper bound and the question whether $5$ or $6$ is the best upper bound is still open. It would also be interesting to compute the chromatic number of more  Cartesian products.

\section{Acknowledgements}

We would like to thank Hervé Hocquard and Éric Sopena for their helpful comments through the making of this paper. We would also like to thank the reviewers of our submission to CALDAM 2020 for their comments, especially Reviewer 2 of our submission to CALDAM 2020 for pointing us to the techniques of \cite{Imrich2018} which improved our algorithm. This work is partially supported by the ANR project HOSIGRA (ANR-17-CE40-0022) and the IFCAM project ``Applications of graph homomorphisms'' (MA/IFCAM/18/39).

\bibliographystyle{plain}
\bibliography{biblio}

%\newpage
%\begin{subappendices}
%\renewcommand{\thesection}{\Alph{section}}%
%% or try \arabic{section}
%
%\section{Proof of Theorem \ref{th:grid}}
%\label{appendix:grid}
%\input{grid.tex}
%
%\section{Decomposition algorithm: more detail on the algorithm}
%\label{appendix:algo}
%\input{algoReconAnnex.tex}
%
%\section{Proof of Lemma \ref{lem:ProdCyclePas4} and Theorem \ref{thm:cycles}}
%\label{appendix:cycles}
%This appendix will present the proof of Lemma \ref{lem:ProdCyclePas4}.
%\input{cyclesProof.tex}
%\end{subappendices}

\end{document}